\newtheorem{definition}{Definition}
\newtheorem{proposition}{Proposition}
\newtheorem{theorem}{Theorem}
\newtheorem{lemma}{Lemma}
\newtheorem{corollary}{Corollary}
\theoremstyle{definition}
\newtheorem{example}{Example}
\newcommand{\red}{\textcolor{black}}
\newcommand{\green}{\textcolor{green}}
\def\v{\mathop{\boldsymbol v}\nolimits}
\def\w{\mathop{\boldsymbol w}\nolimits}
\def\x{\mathop{\boldsymbol x}\nolimits}
\def\y{\mathop{\boldsymbol y}\nolimits}
\def\z{\mathop{\boldsymbol z}\nolimits}
\def\CC{\mathop{\boldsymbol C}\nolimits}
\def\M{\mathop{\boldsymbol M}\nolimits}
\def\P{\mathop{\boldsymbol P}\nolimits}
\def\X{\mathop{\boldsymbol X}\nolimits}
\def\bxi{\mathop{\boldsymbol\xi}\nolimits}
\def\bOmega{\mathop{\boldsymbol\Omega}\nolimits}
\def\O{\mathop{\mathcal O}\nolimits}
\def\S{\mathop{\mathcal S}\nolimits}
\def\Tm{\mathop{\boldsymbol T}_{\!m+2}\nolimits}
\def\TT{\mathop{\boldsymbol T}_{\!4}\nolimits}
\begin{document}

\title{The Logarithm Map, its Limits\\ and Fr\'echet Means in Orthant Spaces}

\author{D. Barden\thanks{Girton College, University of Cambridge, Cambridge, CB3 0JG, UK ({\tt d.barden@dpmms.cam.ac.uk}).} \and H. Le\thanks{School of Mathematical Sciences, University of Nottingham, Nottingham, NG7 2RD, UK ({\tt huiling.le@nottingham.ac.uk}).}}

\date{}
\maketitle

\begin{abstract}
The first part of the paper studies the expression for, and the properties of, the logarithm map on an orthant space, which is a simple stratified space, with the aim of analysing Fr\'echet means of probability measures on such a space. In the second part, we use these results to characterise Fr\'echet means and to derive various of their properties, including the limiting distribution of sample Fr\'echet means.  
\end{abstract}

\noindent\textbf{Keywords:} Fr\'echet mean; \red{limiting distribution of sample Fr\'echet means;} logarithm map; stratified space.

\vskip 10pt
\noindent\textbf{AMS MSC 2010:} 60B05; 60B10.
\section{Introduction}

Several papers have recently appeared concerning probabilistic and statistical analysis of data on certain stratified spaces (cf. \cite{BHV}, \cite{BLO1}, \cite{HHLMMMNOPS}, \cite{MB} and \cite{HMMN}). One such example is the analysis of phylogenetic trees on the BHV space introduced in \cite {BHV} (cf. \cite{SH}, \cite{MO}, \cite{TN1}, \cite{BLO2}, \cite{KC}, \cite{MOP} and \cite{TN2}). The BHV space $\Tm$ of metric trees with $m+2$ leaves is a stratified CAT(0)-space with each stratum being isometric with a positive Euclidean orthant that is at most $m$-dimensional. It is already clear from these preliminary results that some fundamental statistics exhibit strikingly different features from the corresponding ones on Euclidean spaces or on manifolds and that one faces significant challenges in developing novel tools to analyse them, on account of the non-trivial topological structure of these spaces. It also becomes apparent that, although the topological and geometrical properties of stratified spaces have been extensively studied and are mostly well understood, many of the properties required for probabilistic and statistical analysis of data on these spaces have not been addressed. 

\vskip 6pt
This paper concentrates on orthant spaces introduced in \cite{MOP}, a relatively simple type of stratified space but more general than the space $\Tm$ of phylogenetic trees. The latter has $(2m+1)!!$ $m$-dimensional strata, together with their bounding strata, selected from among the $\begin{pmatrix}M\\m\end{pmatrix}$ positive orthants in $\mathbb{R}^M$ where $M=2^{m+2}-m-4$. In particular, each co-dimension one stratum bounds exactly three top-dimensional strata. Thus not only are the relevant dimensions sparse, but the percentage of the positive orthants occupied by the tree space of each dimension declines exponentially. These constraints, such as the restrictions on the dimension and the number of orthants involved in the space, no longer hold in a general orthant space, although we do have to make one restriction to ensure that it is a CAT(0)-space. We shall recall, in the next section, the concept of an orthant space, introducing the subsidiary concepts and definitions we use to describe the structure of such spaces and, in particular, of their tangent cones at the various points.

A fundamental concept for statistical analysis of non-Euclidean data is that of the Fr\'echet mean, which generalises the concept of the mean of Euclidean data. A point $\x_0$ in a metric space $\M$ is a Fr\'echet mean of a probability measure $\mu$ on $\M$ if, at $\x_0$, the Fr\'echet function of $\mu$ defined by
\begin{eqnarray}
\frac{1}{2}\int_{\M} d(\x,\x')^2\,d\mu(\x')
\label{eqn1}
\end{eqnarray}
attains its global minimum. In order to characterise and locate Fr\'echet means, we need to take directional derivatives of the Fr\'echet function and hence, implicitly, of the distance function. The latter involves the logarithm map $\log_{\x^*}(\x)$ which, analogous to the inverse of the exponential map on manifolds, is the initial tangent vector to the geodesic from $\x^*$ to $\x$. This logarithm map is globally well-defined on CAT(0)-spaces and has been studied, for example, in \cite{BK} and \cite{IN}. However, these results do not cover all the properties required for our analysis, although naturally we do rely on some of their results. On the other hand, an algorithm for finding the geodesic between any two given trees in the tree space $\Tm$ was given in \cite{MO} and, using the analysis behind that algorithm, the expression for the logarithm map $\log_{\x^*}$ was obtained in \cite{BLO2} when $\x^*$ lies in a top-dimensional stratum. Although this expression for $\log_{\x^*}$ could be extended to more general orthant spaces, it is noted in \cite{BLO2} that these results are not adequate to provide a tool for analysing Fr\'echet means when they lie in any stratum of co-dimension at least two. The latter requires a better understanding of the behaviour of the logarithm map as the end points of the geodesics move within and between strata. To this end, we first re-examine geodesics directly from first principles in Section 3, in particular avoiding the implicit assumption that $\x^*$ lies in a top-dimensional stratum. This leads, in Section 4, to an explicit expression, \red{given in Theorem \ref{thm1},} for a version of the logarithm map that we shall use, valid for any point in an orthant space. Since \red{the form of} this expression is determined by the carrier of the geodesic, we analyse possible changes in that carrier, focussing on the set, specified in Definition \ref{def1g}, of points $\x$ at which significant changes occur. This allows us, in Section 5, to derive the \red{directional limits} of the logarithm map as the reference point $\x'$ approaches $\x^*$ from a co-bounding stratum. We also study the projections of these limits, and the limits of the projections, onto the various strata related to the stratum in which $\x^*$ lies. This enables us to prove the existence of, and to identify, certain of their derivatives and directional derivatives.

With this understanding of the logarithm map, the second part of the paper turns its attention to the analysis of Fr\'echet means. In Section 6 we obtain, in Theorem \ref{thm3}, the necessary and sufficient conditions for a point $\x^*$ to be the Fr\'echet mean of a probability measure on the orthant space $\X^m$. Two special sets arise in this analysis. Firstly, one of the criteria in Theorem \ref{thm3} involves an inequality and the set, specified in Definition \ref{def2a}, of vectors in the tangent cone to $\X^m$ at the Fr\'echet mean for which that is an equality is significant. Secondly, there is the set given by Definition \ref{def2}. This is related to a limit of the logarithm map and, in a certain sense, encapsulates the `departure' of this limit from the analogous behaviour of the logarithm map on a Euclidean space. Both of these sets are related to the limiting distribution of sample Fr\'echet means, which we establish in the final Section 7. There, in particular, we relate the limiting distribution with Euclidean Gaussian random variables. The covariance matrices of these random variables are related to the derivative of the projection of the logarithm map and to projections of the limits of the logarithm map.   

\vskip 6pt
Although we do not make it explicit, in view of our previous results for $\Tm$ and the comments in \cite{MOP}, our interest in this paper is primarily in the case that $\x^*$ lies in a stratum of \red{local} co-dimension at least two. The results, when restricted to a \red{locally} top-dimensional or co-dimension one stratum, do generalise those for tree spaces in \cite{BLO2} although the approach here is necessarily more complex in order to encompass all cases.

\section{Orthant spaces}

\red{Throughout this paper, we shall use the term `positive' to mean strictly positive. By an open positive orthant in the Euclidean space $\mathbb{R}^M$ we shall mean, for some subset $E=\left(u^{\phantom{A}}_{l_1},\cdots,u^{\phantom{A}}_{l_m}\right)$ of the standard ordered orthonormal basis $U=\left(u_1,\cdots,u^{\phantom{A}}_M\right)$ of $\mathbb{R}^M$, the relatively open set
\[\mathcal O(E)=\left\{\sum_{i=1}^m\lambda_iu^{\phantom{A}}_{l_i}\mid\lambda_i>0\right\}.\]
We denote by $\mathbb R(E)$ the subspace spanned by $E$, and we shall refer to the $u^{\phantom{A}}_{l_i}\in E$ as the axes of $\mathbb R(E)$ or of $\O(E)$.} Then, an orthant space is a union of open positive orthants in a common Euclidean space with certain natural constraints, \red{as specified in the following definition}, that ensure, for example, that such spaces are also CAT(0). Orthant spaces were first introduced in \cite{MOP} as a generalisation of the tree spaces of \cite{BHV}. 

\begin{definition}
For two given integers $M\geqslant m$, an orthant space $\X^m$ of dimension $m$ is a subspace of the Euclidean space $\mathbb{R}^M$ that is a union of open positive orthants, \red{whose} maximum dimension \red{is} $m$, \red{and has the intrinsic metric induced from the Euclidean metric on $\mathbb R^M$. It satisfies the following conditions:}
\begin{itemize}
\item[$(i)$] for every orthant $\sigma$ in $\X^m$, the orthants in the closure $\overline\sigma$ of $\sigma$ are also included in $\X^m$; 
\item[$(ii)$] if, for any positive orthant $\sigma$ in $\mathbb{R}^M$, all the $2$-dimensional orthants in its closure are in $\X^m$, then $\sigma$ itself is in $\X^m$.
\end{itemize}
\label{def0a}
\end{definition}

\red{The intrinsic metric on $\X^m$ is the length metric as defined in \cite{BH}. It is the metric $d$ for which, for any two points $\x_1$ and $\x_2$ in $\X^m$, the distance $d(\x_1,\x_2)$ is the infimum of the lengths of piecewise linear paths \red{in $\X^m$} joining $\x_1$ to $\x_2$. In particular, a geodesic will also be piecewise linear and linear within each stratum.}

\red{Note that there is no loss of generality in restricting $\X^m$ to contain only positive orthants: given two orthants that differ only in having positive or negative coordinates with respect to one particular axis, the intrinsic metric will be the same as it would be if we replace, say the negative axis, by an axis orthogonal to $\mathbb{R}^M$. Thus, rather than considering $\X^m$ to be a union of arbitrary orthants in $\mathbb{R}^M$, we could consider it to be a union of positive orthants in $\mathbb{R}^{2M}$. Henceforth, we shall assume all our orthants to be open and positive, mentioning their closure explicitly where that is relevant.}

The first condition \red{in the above definition} correlates with the constraints used in the definition for orthant space in \cite{MOP} and the second one restricts attention to the `non-positively curved' orthant spaces in \cite{MOP} (Proposition 6.10). These two conditions were first used by the authors of \cite{BHV} to ensure the CAT(0)-property for tree spaces.  

\vskip 6pt
Throughout the rest of the paper, $\X^m$ will denote an orthant space of fixed dimension $m$ viewed as comprising strata that are orthants of a fixed Euclidean space $\mathbb{R}^M$, where $M$ is not necessarily $2^{m+2}-m-4$ as it would be for tree space. Also, whenever we specify an orthant by a union of subsets of the standard orthonormal basis $U$ of $\mathbb R^M$, that will always be intended as a union of mutually disjoint subsets.

\vskip 6pt
The orthant space $\X^m$ so defined is a Whitney stratified set in the sense of Thom, \cite{RT}, the strata being the various orthants that comprise $\X^m$. \red{Note that, since $\X^m$ is a union of orthants in a fixed Euclidean space $\mathbb{R}^M$, the number of strata in $\X^m$ is always finite. $\X^m$} has the structure of a cone with vertex, or `cone point', the origin $o$ \red{in $\mathbb R^M$}, since each orthant is such a cone without its vertex, but that vertex, the origin, is necessarily included in $\X^m$. \red{In particular, $\{o\}$ is the unique zero-dimensional stratum in $\X^m$.} Note however that \red{our relatively open strata differ from those in \cite{BH}.} 

The CAT(0)-property of the orthant space $\X^m$ results \red{as follows, where all the references are to} \cite{BH}. The intersection $L$ of $\X^m$ with the unit sphere in $\mathbb{R}^M$ is a simplicial complex on account of condition $(i)$ and, since the axes in $\mathbb{R}^M$ are orthogonal, it is an `all-right spherical complex' (Section 7A.10) which, on account of condition $(ii)$, is a `flag complex'. Then, by a theorem of Gromov (Theorem 5.18), $L$ is a CAT(1)-space. The metric on $\X^m$ implied by describing it as the $0$-cone over $L$ (Definition 5.6) is the intrinsic metric so that, by the theorem of Berestowski (Theorem 3.14), $\X^m$ is CAT(0). 

In particular, by the Cartan-Hadamard theorem (cf. \cite{BH}, p.193), there is a unique geodesic between any two points of the orthant space $\X^m$. It follows that each stratum is totally geodesic in the strong sense that, if a geodesic contains two points of a stratum, it must include the entire linear segment in that stratum determined by those two points. On the other hand, although the distance metric for the CAT(0)-structure is induced from the Euclidean metric, the angles along and between curves may differ for the two contexts. For example, a geodesic, defined as a shortest path between its endpoints in either context, will be a piecewise linear curve in $\mathbb{R}^M$, linear in each stratum, with angle $\pi/2$ in the Euclidean subspace metric where it changes stratum. However, for the CAT(0)-structure, that angle is defined to be $\pi$. 

\vskip 6pt
\red{The properties of an orthant space are largely determined by the incidence relations between its various strata. The following definitions capture two such relationships that will be used frequently in the paper.}

\begin{definition}
\red{For subsets $E$ and $F$ of the standard orthonormal basis $U=(u_1,\cdots,u^{\phantom{A}}_M)$ of $\mathbb R^M$, if $E\subseteq F$, then the orthant} ${\cal O}(E)$ is said to bound ${\cal O}(F)$ and ${\cal O}(F)$ to co-bound ${\cal O}(E)$. 
\label{def0b} 
\end{definition}

Note that, unlike the case for tree spaces, strata of lower dimension than $m$ need not bound any higher dimensional strata, \red{in particular they need not bound $m$-dimensional strata.}

\begin{definition}
\red{An orthant $\sigma$ of dimension $k$ in $\X^m$ is said to have co-dimension $m-k$ and, if $m'(\leqslant m)$ is the maximum dimension of orthants that $\sigma$ co-bounds, then $\sigma$ is said to have local co-dimension $m'-k$.}
\end{definition}

\vskip 10pt
\noindent\textbf{The tangent cone}\label{tangent cone}

\vskip 6pt
It is natural for our purposes to follow \cite{BH} and to define the tangent cone to $\X^m$ at a point $\x$ to consist of all initial tangent vectors to smooth curves starting from $\x$, the smoothness possibly only being one-sided at $\x$. Note, however, that this is not the same as the generalised tangent space of \cite{GM}. To describe the tangent cone in more detail we work in $\mathbb{R}^M$. Then, when $\x$ lies in a top-dimensional, or locally top-dimensional, stratum $\sigma$ of dimension $m'(\leqslant m)$, the orthant space $\X^m$ is locally an $m'$-dimensional manifold so that a smooth curve can be extended on both sides of $\x$. Thus, the tangent cone will be the usual tangent space, a subspace of $\mathbb{R}^M$ isometric with $\mathbb{R}^{m'}$ and tangent to $\sigma$. However, if $\x$ lies in a stratum of \red{locally} positive co-dimension, then the orthant space $\X^m$ is no longer locally a manifold. Consequently, the tangent cone at $\x$ is no longer a Euclidean space. For example, if the stratum $\sigma$ has co-dimension one and bounds top-dimensional strata, the tangent cone to $\X^m$ at $\x$ is an open book: \red{it has a closed half space $\mathbb {H}^m$ for each top-dimensional stratum $\tau$ co-bounding $\sigma$, with all the boundary $(m-1)$-dimensional faces identified with each other and with the tangent space to $\sigma$ at $\x$.} 

More generally, the tangent cone at a point $\x$, in a stratum $\sigma=\O(E)$ of co-dimension $l(\geqslant1)$, has a \red{topology and} stratification imitating that of $\X^m$ itself in the neighbourhood of $\x$: for each stratum $\tau=\O(E\cup F)$ of co-dimension $l'<l$ that co-bounds $\sigma$, so that $F$ comprises the \red{basis} vectors that have positive coordinates in $\tau$ but zero coordinates in $\sigma$, there is the \red{closed} stratum $\mathbb{R}(E)\times\red{\overline{\O(F)}}$ in the tangent cone. Then, the tangent cone at $\x$ has \red{its} stratification determined by identifying the various \red{$\mathbb{R}(E)\times\{\bf0\}$ with each other} as well as \red{identifying any} tangent axes shared by pairs of strata that co-bound $\sigma$. In particular, when no strata co-bound $\sigma$, the tangent cone is simply the Euclidean space $\mathbb{R}(E)$.

\begin{definition} 
Let $\sigma=\O(E)$ and $\tau=\O(E\cup F)$ be two strata in $\X^m$ with co-dimensions $l$ and $l'<l$, respectively. The component $\mathbb{R}(E)$ common to all the strata in the tangent cone to $\X^m$ at $\x\in\sigma$ is referred to as the tangent space to $\sigma$ at $\x$. Vectors in the \red{$($open$)$} stratum $\mathbb{R}(E)\times\O(F)$ of the tangent cone at $\x\in\sigma$ with non-zero second component are referred to as vectors tangent to $\tau$ at $\x$. 

The set of unit vectors in $\mathbb{R}(E)\times\O(F)$ is denoted by $\S^{m-l'}_{\tau,\sigma}$ and the \red{sub}set of those in $\{{\bf0}\}\times\O(F)$ by $\S^{l-l'}_{\tau\setminus\sigma}$.
\label{def0c}
\end{definition}

\red{The sets $\S^{m-l'}_{\tau,\sigma}$ and $\S^{l-l'}_{\tau\setminus\sigma}$ are open spherical segments of dimensions $m-l'-1$ and $l-l'-1$ respectively, the latter lying in the space $\mathbb R(F)$ orthogonal to $\mathbb R(E)$.}

\vskip 6pt 
Note that the basis vectors in $E$ do not generally precede those of $F$ in \red{the standard ordered basis} $U$, and so writing the stratum as $\mathbb{R}(E)\times\O(F)$ implies an appropriate permutation of the coordinates. 

\begin{definition}
\red{For any subset $E$ of the standard ordered orthonormal basis $U$ of $\mathbb R^M$, where $E$ does not necessarily inherit its order from $U$, we denote by $\jmath:\mathbb R(E)\rightarrow\mathbb R^M$ the linear transformation permuting coordinates and positioning them appropriately as coordinates, with respect to $U$, of a vector in $\mathbb R^M$.}
\label{def0i}
\end{definition}

\red{We are mainly interested in the restriction of $\jmath$ to  subspaces of $\mathbb R(E)$. For example, if $E=(u_1,u_4)$ and $F=(u_2,u_6)$, then a point $(\x,\y)$ in $\mathbb R(E)\times\O(F)$ with coordinates $((x_1,x_2),(y_1,y_2))$ would have $\jmath(\x,\y)=(x_1,y_1,0,x_2,0,y_2,0,\cdots,0)$ in $\mathbb R^M$.}

\vskip 6pt
Inherited from the CAT(0)-structure of $\X^m$, the tangent cone to $\X^m$ at $\x$, since it is metrically complete, also has a 
CAT(0)-structure (cf. \cite{BH}, Theorem 3.19). \red{While the CAT(0)-metric on $\X^m$ is, by definition, the intrinsic metric, the CAT(0)-metric on the tangent cone to $\X^m$ at $\x$ is defined in terms of the Alexandrov angle. Recall that, for any three points $\x,\x_1,\x_2$ in $\X^m$, the comparison triangle of the geodesic triangle $\Delta(\x,\x_1,\x_2)$ in $\X^m$ formed by $\x,\x_1,\x_2$ is the triangle $\bar\Delta(\x,\x_1,\x_2)$ in the Euclidean plane with vertices $\bar{\x}$, $\bar{\x}_1$, $\bar{\x}_2$ such that the Euclidean distances $d(\bar{\x},\bar{\x}_1)$ etc. match the intrinsic distances $d(\x,\x_1)$ etc. in $\X^m$. Then, the Alexandrov angle $\angle_{\x}(\gamma_1,\gamma_2)$ between the geodesics $\gamma_1$ and $\gamma_2$ starting from $\x$ is defined to be 
\[\angle_{\x}(\gamma_1,\gamma_2)=\lim_{t\rightarrow0}\overline\angle_{\x}(\gamma_1(t),\gamma_2(t)),\]
where $\overline\angle_{\x}(\gamma_1(t),\gamma_2(t))$ is the Euclidean angle at $\bar\x$ of the comparison Euclidean triangle $\bar\Delta(\x,\gamma_1(t),\gamma_2(t))$ (cf. \cite{BH}, Section 1.12). Note that, since geodesics in $\X^m$ are piecewise linear, the above limit is well-defined. Then, the inner product on the tangent cone of $\X^m$ at $\x$ is defined by 
\begin{eqnarray}
\ll\w_1,\w_2\gg=\|\w_1\|\,\|\w_2\|\cos\angle_{\x}(\gamma_1,\gamma_2),
\label{eqn0a}
\end{eqnarray}
where $\w_1$ and $\w_2$ are the initial tangent vectors of $\gamma_1$ and $\gamma_2$. By analogy with vectors in the tangent space to a manifold, the distance $\rho_{\x}(\w_1,\w_2)$ between vectors $\w_1$ and $\w_2$ in the tangent cone at $\x$ is defined to be
\[\rho_{\x}(\w_1,\w_2)=\left\{\|\w_1\|^2+\|\w_2\|^2-2\ll\w_1,\w_2\gg\right\}^{1/2}\]
(cf. \cite{IN}, p144). Note that, although in general $\ll\,\,,\,\,\gg$ differs from the usual Euclidean inner product $\langle\,\,,\,\,\rangle$, a geodesic triangle contained in the closure of a stratum of $\X^m$ is in fact a Euclidean geodesic triangle and its angles are the Euclidean ones. In particular, $\ll\w_1,\w_2\gg=\langle\w_1,\w_2\rangle$ for any $\w_1,\w_2$ in the closure of $\mathbb{R}(E)\times\O(F)$ and then $\rho_{\x}(\w_1,\w_2)=\|\w_1-\w_2\|$.} 

\section{The carriers and supports of geodesics}

In order to analyse the logarithm map, we first need to understand the geodesics. The intersection of a geodesic with a stratum, a Euclidean orthant, will be either a single point or a complete intersection of a Euclidean line with that orthant. 

\begin{definition}
The carrier of a geodesic is the sequence of strata each of whose intersection with the geodesic is a Euclidean line of positive length.
\label{def0e}
\end{definition}

\red{This is essentially the terminology that was introduced in \cite{KV}} in the context of tree spaces. The case of a single point intersection arises between successive strata of the carrier: between the (open) linear segment in one stratum and that in the next, there will be one point in the common bounding stratum of those two strata. This intermediate stratum is not listed in the carrier; it is in fact specified by the adjacent strata as the stratum of highest dimension in the intersection of their closures. Similarly, when a geodesic starts, or ends, in a stratum of positive co-dimension and does not remain in that stratum, but passes immediately to a co-bounding stratum, then the latter will be the first, or last, stratum in the carrier. In such a situation, we shall regard the point in the bounding stratum as having the same set of axes as the co-bounding stratum, albeit with the relevant coordinates zero. That is, we regard it as a point of the closure of the co-bounding stratum. 

\vskip 6pt
To describe the carrier of the geodesic from $\x_1$ to $\x_2$ in more detail, as well as for later analysis, we require the following terminology.  

\begin{definition}
$(i)$ The subsets $E$ and $F$ of $U$ are said to be compatible in the orthant space $\X^m$ if the orthant ${\cal O}(E\cup F)$ is contained in $\X^m$. 

\red{$(ii)$ For a subset $E$ of the standard orthonormal basis $U$ of $\mathbb R^M$, we denote the number of vectors in $E$ by $|E|$.} 
\label{def0d}
\end{definition}

\red{We first identify the set of axes common to the points along a geodesic where, for any $\x\in\X^m$, $E(\x)$ denotes the set of axes in $U$ with respect to which $\x$ has positive coordinates.}

\begin{proposition}
\red{For any $\x_1,\x_2\in\X^m$, the set $E(\x_1,\x_2)$, defined by}
\begin{eqnarray}
\begin{array}{rcl}
E(\x_1,\x_2)&=&\left\{E(\x_1)\cap E(\x_2)\right\}\\
&&\bigcup\,\{e\in E(\x_1)\mid e\hbox{ is compatible with }E(\x_2)\}\\
&&\bigcup\,\{e\in E(\x_2)\mid e\hbox{ is compatible with }E(\x_1)\},
\end{array}
\label{eqn0c}
\end{eqnarray}
\red{forms the set of axes common to all strata along the geodesic between $\x_1$ and $\x_2$.}
\label{prop0c}
\end{proposition}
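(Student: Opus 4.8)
The plan is to translate the statement into a question about a single coordinate function and then resolve it using two convexity properties of $\X^m$. For an axis $e\in U$ write $x_e$ for the corresponding coordinate, so that $e\in E(\x)$ exactly when $x_e>0$, and set $K_e=\{\x\in\X^m\mid x_e=0\}$, the union of those strata that omit $e$. Along the geodesic $\gamma$ from $\x_1$ to $\x_2$ the function $f(t)=\gamma(t)_e$ is continuous and piecewise linear, and $e$ is an axis of every stratum of the carrier precisely when $f(t)>0$ for all $t\in(0,1)$; so it suffices to show $f>0$ on $(0,1)$ if and only if $e\in E(\x_1,\x_2)$. First I would record that the coordinate-killing map $r_e(\x)=\x-x_e u_e$ sends $\X^m$ into $K_e$: by condition $(i)$ of Definition \ref{def0a} it carries each stratum $\O(E)$ into its face $\O(E\setminus\{e\})$, and, since it only shortens each linear segment, it is $1$-Lipschitz for the intrinsic metric. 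Hence, by uniqueness of geodesics, $K_e$ is geodesically convex, so the zero set of $f$ is a subinterval of $[0,1]$; in particular, if $x_{1,e}=x_{2,e}=0$ then $\gamma\subseteq K_e$ and $f\equiv0$.

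For the inclusion that every common axis lies in $E(\x_1,\x_2)$, suppose $e$ is common. The previous remark forces $e\in E(\x_1)\cup E(\x_2)$, say $e\in E(\x_1)$. Since the last stratum of the carrier contains $\x_2$ in its closure, its axis set includes $E(\x_2)$ and also $e$; thus $\O(E(\x_2)\cup\{e\})$ is a face of a stratum of $\X^m$ and so lies in $\X^m$ by condition $(i)$, i.e. $e$ is compatible with $E(\x_2)$ in the sense of Definition \ref{def0d}. Together with $e\in E(\x_1)$ this places $e$ in $E(\x_1,\x_2)$ via \eqref{eqn0c}.

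The reverse inclusion is the substantive one. Unwinding \eqref{eqn0c}, membership $e\in E(\x_1,\x_2)$ is equivalent to: $e$ is compatible with both $E(\x_1)$ and $E(\x_2)$, and $e\in E(\x_1)\cup E(\x_2)$. I would pass to the link $L=\X^m\cap\mathbb{S}^{M-1}$, the all-right spherical flag complex over which $\X^m$ is the cone, writing $\xi_i=\x_i/\|\x_i\|$. The cone geodesic $\gamma$ projects to the $L$-geodesic $\sigma=[\xi_1,\xi_2]$, and its carrier is the sequence of open simplices that $\sigma$ meets. Because the complex is all-right, the open star of a vertex $e$ coincides with the open ball $B(e,\pi/2)$, so $e$ is common if and only if $d_L(\sigma(t),e)<\pi/2$ for all interior $t$; moreover $e$ compatible with $E(\x_i)$ translates into $d_L(\xi_i,e)\le\pi/2$, with strict inequality exactly when $e\in E(\x_i)$. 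Using that closed balls of radius $\pi/2$ are convex in the CAT$(1)$ space $L$, and that comparison with the round sphere makes $d_L(\cdot,e)$ convex along $\sigma$ within that radius, the endpoint bounds $d_L(\xi_i,e)\le\pi/2$ give $d_L(\sigma(t),e)\le\pi/2$ throughout, with strict inequality in the interior unless $d_L(\cdot,e)\equiv\pi/2$ on $\sigma$. The latter forces \emph{both} endpoints to lie at distance $\pi/2$, i.e. $e\notin E(\x_1)\cup E(\x_2)$, which is excluded here (and is in any case handled by the convexity of $K_e$, which then gives $f\equiv0$). Hence $f>0$ on $(0,1)$, as required. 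When $d_L(\xi_1,\xi_2)\ge\pi$ the geodesic runs through the cone point and the carrier splits, but the same distance estimates show all three clauses of \eqref{eqn0c} are then empty, consistently with there being no common axis.

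The main obstacle I anticipate is the borderline analysis in the link: verifying that in the all-right flag complex $L$ the open star of a vertex is exactly the open $\pi/2$-ball, that $d_L(\cdot,e)$ is convex up to radius $\pi/2$, and, crucially, the careful treatment of the degenerate case $d_L(\cdot,e)\equiv\pi/2$. It is precisely here that conditions $(i)$ and $(ii)$ of Definition \ref{def0a} enter, through the flag and hence CAT$(1)$ property of $L$, and where the convexity of $K_e$ supplied by the retraction $r_e$ is needed to close the argument.
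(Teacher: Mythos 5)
Your proposal is sound, but it reaches the result by a genuinely different route from the paper's own proof. The paper gives a short piecewise-linear argument on coordinate functions: along a geodesic each coordinate is linearly interpolated between any two non-zero values, so a coordinate that hits zero stays zero and one that leaves zero grows monotonically; hence only axes in $E(\x_1)\cup E(\x_2)$ can appear, any axis appearing afresh lies in $E(\x_2)$ and is compatible with $E(\x_1)$, and --- the crux, which the paper asserts rather than derives in detail --- every such compatible axis does enter immediately at $\x_1$. You replace this bookkeeping by two convexity mechanisms. First, the coordinate-killing retraction $r_e$ is $1$-Lipschitz and fixes $K_e$, so uniqueness of geodesics makes $K_e$ convex; this is a clean substitute for the paper's monotonicity observation and settles the forward inclusion together with the degenerate case. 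Second, the substantive reverse inclusion is transferred to the CAT(1) link $L$, where membership and compatibility become the bounds $d_L(\xi_i,e)<\pi/2$ and $d_L(\xi_i,e)\leqslant\pi/2$, and spherical comparison forces strict inequality along the interior of $[\xi_1,\xi_2]$. The paper's proof is shorter and stays inside the PL geodesic structure it develops anyway in Section 3; yours is heavier but actually proves the implication the paper only asserts, from general CAT(1) principles, and the star-versus-ball dictionary is reusable elsewhere.

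Three repairs are needed, none fatal. (a) The identification of the open star with the open $\pi/2$-ball, which you flag as the main obstacle, is in fact immediate in this setting: the intrinsic metric on $L$ dominates the ambient angular metric of $\mathbb{R}^M$, so $x_e=0$ already forces $d_L(\cdot,e)\geqslant\pi/2$, while a point of an open simplex containing $e$ is at simplex distance $\arccos(x_e)<\pi/2$ from $e$; the flag condition enters only through Gromov's theorem making $L$ CAT(1), which you need for the comparison step. (b) Your dichotomy ``strict inequality in the interior unless $d_L(\cdot,e)\equiv\pi/2$'' is false in a general CAT(1) space: in a metric tree both endpoints can lie at distance exactly $\pi/2$ from $e$ while the interior dips strictly closer. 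What comparison does give, and is all you actually use, is that if one endpoint satisfies $d_L<\pi/2$ and the other $d_L\leqslant\pi/2$, with $d_L(\xi_1,\xi_2)<\pi$, then the whole interior lies strictly inside the ball; you should state the step in that form. (c) The trivial case in which $\x_1$ or $\x_2$ is the cone point, so that $\xi_i$ is undefined, needs a separate sentence (there $E(\x_1,\x_2)$ reduces to $E(\x_2)$ or $E(\x_1)$ and the geodesic is radial).
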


\begin{proof}
Observe that, for a geodesic, each coordinate function must be linearly interpolated between any two values that are non-zero. It follows that once a particular coordinate, having been positive along the geodesic, becomes zero it must remain so or, having started at zero, once it becomes positive, it must continue monotonically to its final value. In particular, the only basis vectors that can occur with positive coordinate at any point along the geodesic from $\x_1$ to $\x_2$ are those that belong to $\x_1$ or $\x_2$ or to both. Moreover, \red{when} the geodesic from $\x_1$ \red{passes} immediately to a co-bounding stratum, \red{all the new axes in that stratum} must have coordinate zero at $\x_1$ increasing linearly along the geodesic to its value at $\x_2$. \red{Any such additional axis $e$ of the co-bounding stratum is in $E(\x_2)$} and is compatible with all the axes in $\red{E(\x_1)}$; and any such $e$ must occur in this way. \red{Thus, the set of axes common to all strata along the geodesic from $\x_1$ to $\x_2$ is precisely the given set $E(\x_1,\x_2)$.}
\end{proof}

\red{Note that, at one extreme, if $\x_1$ and $\x_2$ both lie in the closure of an orthant $\O(E)$ and not both in the same boundary component, then $E(\x_1,\x_2)=E$. At the other extreme, if $\overline{\O(E(\x_1))}\cap\overline{\O(E(\x_2))}=\emptyset$, then $E(\x_1,\x_2)=\emptyset$. In general, $E(\x_1,\x_2)$ depends only on the orthants in which $\x_1$ and $\x_2$ lie, and is independent of their positions in those orthants.}

\vskip 6pt
The number $k+1$ of orthants in the carrier $\mathcal{C}=(\mathcal{O}_0,\mathcal{O}_1,\cdots,\mathcal{O}_k)$ of the geodesic from $\x_1$ to $\x_2$ will, naturally, depend on both $\x_1$ and $\x_2$. If $\x_1$ lies in a top dimensional stratum it will have $m$ strictly positive coordinates, all of which, assuming that none are also positive in $\x_2$, must become zero somewhere along the geodesic and at least one must become zero on each change of stratum as they cannot vanish within a stratum of the carrier. Thus, there will be $m+1$ strata in the carrier, that is $k=m$, if and only if they vanish one at a time. So, $k<m$ if and only if somewhere along the geodesic at least two coordinates become zero on passing from $\mathcal{O}_i$ to $\mathcal{O}_{i+1}$. When \red{$|E(\x_1,\x_2)|=k_0$},  the maximum value of $k$ would now be $k'=m-k_0$. Similarly, if $\x_1$ were in a stratum of dimension $m_0$, this maximum would be $m_0-k_0$.

\red{From now on, for given $\x_1$ and $\x_2$, we shall denote the set $E(\x_1,\x_2)$ by both $A_0$ and $B_0$ to accord with the following notation. It follows from Proposition \ref{prop0c} that each member of the sequence of strata $\mathcal{C}=(\mathcal{O}_0,\mathcal{O}_1,\cdots,\mathcal{O}_k)$ that comprise the carrier of the geodesic $\gamma$ from $\x_1$ to $\x_2$ has $\mathcal{O}(A_0)=\mathcal O(B_0)$ as a factor. The carrier of $\gamma$ determines further subsets of axes forming} two sequences $(A_1,\cdots,A_k)$ and $(B_1,\cdots,B_k)$, where $A_i$ is the set of all the axes whose coordinates become zero and $B_i$ the set of all those whose coordinates become positive as the geodesic passes from $\mathcal{O}_{i-1}$ to $\mathcal{O}_i$. Thus, the stratum $\mathcal{O}_{i-1}$ is $\O(B_0\cup B_1\cup\cdots\cup B_{i-1}\cup A_i\cup\cdots\cup A_k)$ \red{and $\mathcal{O}_i=\O(B_0\cup B_1\cup\cdots\cup B_i\cup A_{i+1}\cup\cdots\cup A_k)$}, with $\mathcal{O}_0$ determined by $A_0\cup A_1\cup\cdots\cup A_k$. Clearly, the intermediate stratum between $\O_{i-1}$ and $\O_i$, their common boundary component, is 
\[\O(B_0\cup B_1\cup\cdots\cup B_{i-1}\cup A_{i+1}\cup\cdots\cup A_k).\]
Thus, in particular, 
\begin{enumerate}
\item[$(a)$]
\textit{the sets $B_i$ and $A_j$ of axes are non-empty for all \red{positive} $i$ and $j$, and compatible in $\X^m$ for $\red{0\leqslant} i<j$}; 
\item[$(b)$] $\gamma$ \textit{passes successively with positive length through the orthants $\O_i$ except that it may meet at most one of $\O_0$ and $\O_k$ in a single point};
\item[$(c)$]
\textit{$A_i\cap A_j=\emptyset$ and $B_i\cap B_j=\emptyset$ for all} $i\not=j$. 
\end{enumerate}
The property $(c)$ follows from the facts that $A_1\cup\cdots\cup A_k$ is disjoint from $B_1\cup\cdots\cup B_k$ and that an axis once removed cannot be removed again, or once introduced cannot be introduced again.

\begin{definition}
\red{For any two points $\x_1$ and $\x_2$ in $\X^m$, the support of the geodesic $\gamma$ from $\x_1$ to $\x_2$ is defined to be the pair $(\mathcal{A},\mathcal{B})$ of sequences of sets of axes,
\[\mathcal{A}=(A_0,A_1,\cdots,A_k)\quad\hbox{ and }\quad\mathcal{B}=(B_0,B_1,\cdots,B_k),\]
where $\gamma$ passes successively through the orthants
\begin{eqnarray}
\mathcal{O}_i=\mathcal{O}(A_0\cup B_1\cup\cdots\cup B_i\cup A_{i+1}\cup\cdots\cup A_k),\qquad i=0,1,\cdots,k,
\label{eqn0b}
\end{eqnarray}
that form the carrier of $\gamma$.}
\label{def0f}
\end{definition}

\red{In the context of tree spaces, the definition of the support of a geodesic given here is equivalent to that of the minimal support given in \cite{MOP}.}

\begin{example}
\red{For a geodesic passing successively through the orthants} 
\begin{eqnarray*}
\O_0&=&\O(e_0,e_1,e_2,e_3,e_4,e_5,e_6),\\
\O_1&=&\O(e_0,e_1,f_2,e_3,e_4,e_5,e_6),\\
\O_2&=&\O(e_0,e_1,f_2,f_3,e_5,e_6),\\
\O_3&=&\O(e_0,e_1,f_2,f_3,f_4,e_6),\\
\O_4&=&\O(e_0,e_1,f_2,f_3,f_4,f_5,f_6),
\end{eqnarray*}
\red{the relevant sequences $\mathcal{A}=(A_0,A_1,\cdots,A_4)$ and $\mathcal{B}=(B_0,B_1,\cdots,B_4)$ forming the support would have members $A_0=B_0=\{e_0,e_1\}$, the basis vectors common to all five orthants; $A_1=\{e_2\}$, $B_1=\{f_2\}$; $A_2=\{e_3,e_4\}$, $B_2=\{f_3\}$; $A_3=\{e_5\}$, $B_3=\{f_4\}$; $A_4=\{e_6\}$ and $B_4=\{f_5,f_6\}$.} 
\end{example}

\red{If both $\x_1$ and $\x_2$ lie in the closure of the same orthant, then the geodesic between them is clearly the Euclidean line segment. To understand geodesics in general and, later, to describe and analyse various properties of the logarithm map, we require the orthogonal projections onto the various strata of $\X^m$, where the orthogonality is with respect to the Euclidean inner product on $\mathbb R^M$.} 

\begin{definition}
\red{For $\x\in\X^m$ and $E\subset U$ such that the orthant $\sigma=\O(E)$ is contained in $\X^m$, $P_E(\x)$ denotes the orthogonal projection of $\x$ onto $\O(E)$, that is the vector, or when relevant its coordinate vector, formed by the components of $\x$ in the directions of the unit vectors in $E$.}
\label{def0g}
\end{definition}

\red{In terms of projections, we have the following characterisation of the supports of geodesics when $\x_1$ and $\x_2$ do not lie in the closure of the same orthant.}

\begin{proposition}
\red{Let $\x_1$ and $\x_2$ be two given points in $\X^m$. Suppose that $\mathcal A=(A_0,A_1,\cdots,A_k)$ and $\mathcal B=(B_0,B_1,\cdots,B_k)$ are two sequences of sets of axes such that the $\O_i$ defined by \eqref{eqn0b} are all contained in $\X^m$, where $k>0$ and where all subsets $A_i$ and $B_j$ are mutually disjoint and non-empty, except for $A_0=B_0=E(\x_1,\x_2)$ which may be empty. Then, $(\mathcal{A},\mathcal{B})$ is the support of the geodesic from $\x_1$ to $\x_2$ if and only if} 
\begin{itemize}
\item[$(i)$] \red{for $k>1$ and for all $0<i<k$,} 
\begin{eqnarray}
\red{\frac{\|P_{A_i}(\x_1)\|}{\|P_{B_i}(\x_2)\|}<\frac{\|P_{A_{i+1}}(\x_1)\|}{\|P_{B_{i+1}}(\x_2)\|};}
\label{eqn2g}
\end{eqnarray}
\item[$(ii)$] \red{for all $0<i\leqslant k$ and all non-trivial partitions $C_{i1}\cup C_{i2}$ for $A_i$ and $D_{i1}\cup D_{i2}$ for $B_i$, if the orthant 
\begin{eqnarray}
\O'=\O(B_0\cup B_1\cup\cdots\cup B_{i-1}\cup D_{i1}\cup C_{i2}\cup A_{i+1}\cup\cdots\cup A_k)
\label{eqn0h}
\end{eqnarray} 
is contained in $\X^m$, then
\begin{eqnarray}
\frac{\|P_{C_{i1}}(\x_1)\|}{\|P_{D_{i1}}(\x_2)\|}\geqslant\frac{\|P_{C_{i2}}(\x_1)\|}{\|P_{D_{i2}}(\x_2)\|}.
\label{eqn2f}
\end{eqnarray}}
\end{itemize}
\label{prop0a}
\end{proposition}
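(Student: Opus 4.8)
The plan is to characterise the support of the geodesic from $\x_1$ to $\x_2$ by exploiting the fact that, in a CAT(0)-space, the geodesic is the unique length-minimising piecewise-linear path, and then to read off conditions $(i)$ and $(ii)$ as the first-order (KKT-type) optimality conditions for this minimisation. First I would set up the problem concretely. Along any candidate carrier of the form \eqref{eqn0b}, each axis in $A_i$ decreases monotonically from its value at $\x_1$ to zero, and each axis in $B_i$ increases monotonically from zero to its value at $\x_2$; by the observation in the proof of Proposition \ref{prop0c} every coordinate is linearly interpolated between the stratum boundaries. Hence a candidate geodesic is determined by the ordered sequence of times $0=t_0<t_1<\cdots<t_k=1$ (or equivalently the breakpoints in $\mathbb R^M$) at which the path crosses from $\O_{i-1}$ to $\O_i$, and its Euclidean length is a sum of segment lengths, each segment lying in one orthant. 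The key reduction is that, because the axes in $A_0=B_0$ persist throughout with coordinates linearly interpolated, they contribute a fixed Euclidean displacement and can be split off; the non-trivial part of the length depends only on how the $\|P_{A_i}(\x_1)\|$ are ``used up'' and the $\|P_{B_i}(\x_2)\|$ ``built up'' in each orthant, via the norms $\|P_{A_i}(\x_1)\|$ and $\|P_{B_i}(\x_2)\|$ that appear in the two displayed inequalities.

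Next I would establish the two conditions separately. For $(i)$, I expect to show that the ordering inequality \eqref{eqn2g} is exactly the condition that the piecewise-linear path through the prescribed carrier can be length-minimised with \emph{all} segments of strictly positive length and with the correct crossing order. Concretely, along the $i$-th segment the relevant ``slope'' is governed by the ratio $\|P_{A_i}(\x_1)\|/\|P_{B_i}(\x_2)\|$, and minimising the total length forces these ratios to be strictly increasing in $i$; if two consecutive ratios were equal or out of order, one could merge the crossings or reorder them to obtain a strictly shorter path, contradicting that the given sequence is the support of the actual (length-minimising) geodesic. This is essentially a one-dimensional convexity/rearrangement argument on the ratios, and I would phrase it as: the geodesic spends positive time in each $\O_i$ precisely when the ratios are strictly increasing, which is the content of property $(a)$ and $(b)$ recorded after Definition \ref{def0f}.

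For $(ii)$, the inequality \eqref{eqn2f} is the local optimality condition against the competing carrier that splits a single transition $\O_{i-1}\to\O_i$ into two transitions using the alternative intermediate orthant $\O'$ of \eqref{eqn0h}. The idea is that, if $\O'\subset\X^m$, then instead of dropping all of $A_i$ and raising all of $B_i$ simultaneously, the path could drop $C_{i1}$ while raising $D_{i1}$ first, pass through $\O'$, and then deal with $C_{i2}$ and $D_{i2}$. Comparing the length of the original single-crossing path with this refined two-crossing competitor gives, to first order, exactly the inequality \eqref{eqn2f}: the original path is no longer than the competitor if and only if the ratio for the $(C_{i1},D_{i1})$ part is at least the ratio for the $(C_{i2},D_{i2})$ part. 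I would compute this by differentiating the two segment-lengths (or invoking the same slope-ratio comparison as in $(i)$) and using that the common factors $\O(B_0\cup\cdots\cup B_{i-1}\cup A_{i+1}\cup\cdots\cup A_k)$ contribute identically to both competitors, so only the $A_i$-versus-$B_i$ contributions matter.

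The main obstacle, and the step requiring most care, is the ``only if'' direction of $(ii)$ together with the claim that conditions $(i)$ and $(ii)$ are \emph{jointly sufficient} to pin down the support. Sufficiency is delicate because one must rule out \emph{every} alternative carrier, not merely the two canonical perturbations (merging/reordering for $(i)$, and binary refinement for $(ii)$); the argument must show that any shortening of the path can be achieved through a finite sequence of these elementary moves, so that local optimality against them implies global optimality. Here I would lean on the uniqueness of geodesics in the CAT(0)-space $\X^m$ (from the Cartan–Hadamard theorem, as noted after the CAT(0) discussion) to conclude that a path satisfying $(i)$ and $(ii)$, being a critical point of the length with all the correct monotonicity and compatibility constraints, must coincide with the unique geodesic; and conversely the geodesic, being the minimiser, must satisfy both first-order conditions. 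The compatibility hypothesis that each $\O_i$ and each admissible $\O'$ lie in $\X^m$ is exactly what makes these competitor paths available, and condition $(ii)$ in \emph{Definition} of property $(a)$ (compatibility for $i<j$) guarantees the moves stay within $\X^m$.
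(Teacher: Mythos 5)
Your two necessity arguments are essentially sound and amount to the same local computation the paper performs, packaged differently: where you compare lengths of competitor paths, the paper projects the geodesic onto the planar quadrants spanned by $P_{A_i}(\x_1)$, $P_{A_{i+1}}(\x_1)$, $P_{B_i}(\x_2)$, $P_{B_{i+1}}(\x_2)$ (an isometric ``unfolding'') and reads \eqref{eqn2g} off as the angle condition $\theta>\phi$ for the projected geodesic to cross the interior of the middle quadrant, and likewise reads the failure of \eqref{eqn2f} as forcing the geodesic through $\O'$ with positive length. Either formulation works for the ``only if'' half.

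The genuine gap is in the sufficiency direction, exactly at the point you flag and then do not close. Your proposed resolution --- that a path satisfying $(i)$ and $(ii)$ is ``a critical point of the length'' and must therefore ``coincide with the unique geodesic'' by Cartan--Hadamard uniqueness --- is not a valid inference: uniqueness says there is exactly one length-minimising path, but it says nothing about whether a path that is merely stationary under your two restricted families of variations (merging/reordering crossings, and binary refinement through $\O'$) is that minimiser. Two bridges would close this. One is the CAT(0) local-to-global principle (a locally geodesic path is globally minimising), but then you must prove that $(i)$ and $(ii)$ certify local geodesicity (Alexandrov angle $\pi$) at every transition point, i.e.\ that \emph{any} local shortcut necessarily routes through one of the orthants $\O'$ of \eqref{eqn0h} or corresponds to merging consecutive crossings --- precisely the combinatorial claim you leave as an assertion. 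The other is the paper's route: since $\X^m$ is a cone it is simply connected, so any piecewise-linear path can be homotoped to the geodesic by ``simple moves'' that replace two consecutive linear segments by a single one; because the path is already linear inside each orthant, such a move can occur only between consecutive orthants of the carrier, and then condition $(i)$ prevents $\O_i$ from dropping out of the carrier while condition $(ii)$ prevents an extra orthant from being inserted between $\O_{i-1}$ and $\O_i$, so the carrier (hence the support) of the geodesic is the given one. Without one of these two arguments, your step ``local optimality against the tested moves implies global optimality'' remains unproved.
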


\red{Compared with the result in \cite{OP} (Theorem 2.5) in the case of tree spaces, this result confirms the claim in Section 6 of \cite{MOP} that the results on tree spaces also hold for orthant spaces. However, the condition $(ii)$ above is necessarily stronger than that there. This is due to the fact that, in general orthant spaces, the condition that $C_{i2}$ is compatible with $D_{i1}$ does not necessarily guarantee that the orthant $\O'$ given by \eqref{eqn0h} is contained in $\X^m$.}

\begin{proof}
\red{Assuming that $(\mathcal A,\mathcal B)$ is the support of the geodesic from $\x_1$ to $\x_2$,} we focus on three consecutive strata of the carrier, $\mathcal{O}_{i-1},\mathcal{O}_i,\mathcal{O}_{i+1}$, where $\O_j$ are as defined in \eqref{eqn0b}, projecting the geodesic onto the subspace $\mathbb R(A_i\cup A_{i+1}\cup B_i\cup B_{i+1})$. As the geodesic passes from $\mathcal{O}_{i-1}$ to $\mathcal{O}_i$, the coordinates along the axes in $A_i$ become zero and those in $B_i$ start to grow. Then, on passing from $\mathcal{O}_i$ to $\mathcal{O}_{i+1}$, the coordinates of axes in $A_{i+1}$ become zero and those in $B_{i+1}$ grow. Consider the projection of the geodesic onto the three planar quadrants $\Pi_{i-1}$ determined by the vectors \red{$P_{A_i}(\x_1)$ and $P_{A_{i+1}}(\x_1)$, $\Pi_i$ determined by $P_{A_{i+1}}(\x_1)$ and $P_{B_i}(\x_2)$ and $\Pi_{i+1}$ determined by $P_{B_i}(\x_2)$ and $P_{B_{i+1}}(\x_2)$} as in Figure \ref{fig2g}. 
\begin{figure}
\begin{center}
\begin{tikzpicture}[scale=1]
\draw [<->] (-1.9,0) -- (1.6,0);
\draw [<->] (0,-1) -- (0,1.8);
\node at (0,2.1) {$\scriptsize{{P_{\scriptstyle{A_{i+1}}}(\x_1)}}$};
\node at (0,-1.5) {$\scriptsize{P_{B_{i+1}}(\x_2)}$};
\node at (2.3,0) {$\scriptsize{P_{A_i}(\x_1)}$};
\node at (-2.6,0) {$\scriptsize{P_{B_i}(\x_2)}$};
\node at (-1.2,0.8) {$\Pi_i$};
\node at (1.3,0.8) {$\Pi_{i-1}$};
\node at (-1.2,-1.15) {$\Pi_{i+1}$};
\draw [dashed, red] (1.5,1.7) -- (0,0) -- (-1.8,-0.9);
\draw [red] (1.5,1.7) -- (-1.8,-0.9);
\fill [red] (1.5,1.7) circle (1.5pt) node[above,right, red] {$\scriptsize{p(\x_1)}$};
\fill [red] (-1.8,-0.9) circle (1.5pt) node[left, red] {$\scriptsize{p(\x_2)}$};
\draw [red] (0.4,0) arc [radius=0.45, start angle=0, end angle= 40]; 
\node [red] at (0.6,0.2) {$\scriptsize{\theta}$};
\draw [red] (-0.35,0) arc [radius=0.7, start angle=180, end angle=195];
\node [red] at (-0.6,-0.15) {$\scriptsize{\phi}$};
\end{tikzpicture}
\end{center}
\caption{Projection of the geodesic}
\label{fig2g}
\end{figure}
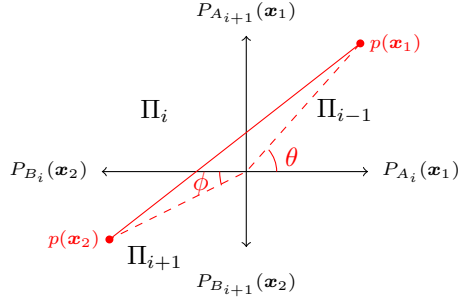
This is an isometric representation of the relevant quadrants except that, in $\mathbb{R}^M$, all four vectors are mutually orthogonal. Then,  $\O_i$ is in the carrier if and only if the projection of the geodesic passes through \red{the interior of} $\Pi_i$. That is if, and only if, the angle $\theta$ that the vector $p(\x_1)=P_{A_i\cup A_{i+1}}(\x_1)$ makes with \red{$P_{A_i}(\x_1)$} in $\Pi_{i-1}$ is greater than the angle $\phi$ that $p(\x_2)=P_{B_i\cup B_{i+1}}(\x_2)$ makes with 
\red{$P_{B_i}(\x_2)$} in $\Pi_{i+1}$, as expressed by \eqref{eqn2g}.

\red{Similarly, if $\O'$ is contained in $\X^m$, the failure of \eqref{eqn2f} would ensure that the geodesic passed through $\O'$, with positive length, between $\O_{i-1}$ and $\O_i$.}

\red{To show that conditions $(i)$ and $(ii)$ determine the support of the geodesic from $\x_1$ to $\x_2$, we first note that, as seen above, $(i)$ ensures that the geodesic must pass through the orthant $\O_i$ between $\O_{i-1}$ and $\O_{i+1}$. Since $\X^m$ is a cone, it is simply connected and any piecewise linear path from $\x_1$ to $\x_2$ can be transformed by homotopy to a geodesic by a sequence of `simple moves' whereby, for each move, two consecutive linear segments of the path are replaced by a single linear segment. Since the geodesic is linear within orthants, that can only occur between consecutive orthants and
condition $(ii)$ guarantees that there is no extra orthant in the carrier between $\O_{i-1}$ and $\O_i$.}
\end{proof} 

\red{As noted previously, if $\x_1$ and $\x_2$ both lie in the closure of an orthant, then $k=0$ and the geodesic between $\x_1$ and $\x_2$ is always a Euclidean segment. Then, when $\x_2$ varies within the orthant in which it lies, the support of the geodesic from $\x_1$ to $\x_2$ remains the same. However, in general, the support may change. The above characterisation of the support of a geodesic implies the following sufficient condition for the support to remain locally constant.}

\begin{corollary}
\red{Suppose that the hypotheses of Proposition $\ref{prop0a}$ hold. If, for all $0<i\leqslant k$ and for all relevant partitions of $A_i$ and $B_i$ as in $(ii)$ of that proposition, the inequality \eqref{eqn2f} is strict then,  for all $\x$ in a sufficiently small neighbourhood of $\x_2$ in its stratum, $(\mathcal A,\mathcal B)$ remains the support for the geodesic from $\x_1$ to $\x$.}
\label{cor0d}
\end{corollary}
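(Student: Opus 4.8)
The plan is to show that each defining inequality of the support, as characterised in Proposition \ref{prop0a}, is an open condition on $\x$ and that all of them hold at $\x=\x_2$; the conclusion then follows because only finitely many such conditions arise. Throughout, $\x_1$ is fixed and $\x$ ranges over the stratum $\sigma=\O(E(\x_2))$ of $\x_2$.

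First I would record what is preserved as $\x$ moves within $\sigma$. Since $\x$ lies in the same orthant as $\x_2$, the remark following Proposition \ref{prop0c} gives $E(\x_1,\x)=E(\x_1,\x_2)=A_0=B_0$, so the common factor is unchanged; moreover whether any of the orthants $\O_i$ of \eqref{eqn0b} or $\O'$ of \eqref{eqn0h} lies in $\X^m$, and whether $\x_1$ and $\x$ lie in the closure of a common orthant, are combinatorial facts depending only on the sets of axes, not on the position of $\x$. Hence the hypotheses of Proposition \ref{prop0a} continue to hold with $\x$ in place of $\x_2$, and the only quantities that vary are the projection norms $\|P_{B_i}(\x)\|$ and $\|P_{D_{ij}}(\x)\|$, which are continuous (indeed linear) in $\x$.

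Next I would check positivity of the relevant denominators. By the monotonicity established in the proof of Proposition \ref{prop0c}, every axis in $B_i$ with $i\geqslant1$ acquires a positive coordinate when the geodesic enters $\O_i$ and retains it thereafter, so $\x_2$ has strictly positive coordinates along all axes of $B_1\cup\cdots\cup B_k$; consequently $B_1\cup\cdots\cup B_k\subseteq E(\x_2)$, and every $\x\in\sigma$ likewise has positive coordinates there. Thus $\|P_{B_i}(\x)\|>0$ and, for each non-trivial partition, $\|P_{D_{i1}}(\x)\|,\|P_{D_{i2}}(\x)\|>0$, so the ratios appearing in \eqref{eqn2g} and \eqref{eqn2f} are continuous functions of $\x$ on a neighbourhood of $\x_2$ in $\sigma$.

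Finally I would invoke continuity. Since $(\mathcal A,\mathcal B)$ is the support at $\x_2$, the necessity direction of Proposition \ref{prop0a} gives both \eqref{eqn2g} and \eqref{eqn2f} there; the former is strict by its very form, and the latter is strict by hypothesis. As $M$ is finite there are only finitely many indices $i$ and finitely many partitions, hence finitely many strict inequalities, each of which persists on an open neighbourhood of $\x_2$ in $\sigma$; intersecting these finitely many neighbourhoods yields one on which conditions $(i)$ and $(ii)$ of Proposition \ref{prop0a} both hold, so by the sufficiency direction of that proposition $(\mathcal A,\mathcal B)$ is the support of the geodesic from $\x_1$ to $\x$ there. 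The one point requiring care, and the only real obstacle, is the positivity of the denominators: were some $\|P_{B_i}(\x_2)\|$ to vanish the ratios would be undefined and the continuity argument would collapse. This is exactly what the monotonicity of the coordinates rules out for $i\geqslant1$; the axes of $B_0$, which may indeed vanish at $\x_2$, never enter the inequalities \eqref{eqn2g} and \eqref{eqn2f}.
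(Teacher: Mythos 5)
Your proof is correct and takes essentially the same route as the paper's: the paper likewise observes that $A_0=B_0$ is unchanged as $\x$ varies within the stratum of $\x_2$, that the strict inequalities \eqref{eqn2g} and (by hypothesis) \eqref{eqn2f} persist under small perturbations by continuity, and then invokes Proposition \ref{prop0a} to conclude. Your extra verifications — that the denominators stay positive because $B_1\cup\cdots\cup B_k\subseteq E(\x_2)$, that the combinatorial conditions on which orthants lie in $\X^m$ are position-independent, and that only finitely many open conditions need to be intersected — are details the paper subsumes under ``by continuity,'' so they add rigour without changing the argument.
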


\begin{proof}
\red{Since $\x$ varies within the stratum in which $\x_2$ lies, the set $A_0=B_0$ remains unchanged. For the other sets in the support, by continuity, the strict inequalities \eqref{eqn2g} and, we are assuming, \eqref{eqn2f} continue to hold for $\x$ in a sufficiently small neighbourhood of $\x_2$ within its stratum. Hence, the required result follows from Proposition \ref{prop0a}.}
\end{proof}

\section{The logarithm map}  

Analogous to an inverse of the exponential map on a Riemannian manifold, the logarithm map on $\X^m$ is defined as follows.

\begin{definition}
The logarithm map at $\x^*\in\X^m$ is the map $\log_{\x^*}(\x)$ from $\X^m$ to the tangent cone to $\X^m$ at $\x^*$, the image of $\x$ being the initial tangent vector, with norm $d(\x^*,\x)$, to the geodesic from $\x^*$ to $\x$. 
\end{definition}

The logarithm map is globally well-defined since, as already mentioned, the Cartan-Hadamard theorem implies that there is a unique geodesic between any two points $\x^*$ and $\x$ of $\X^m$. If that geodesic has an initial segment in a stratum containing $\x^*$ it will certainly have an initial tangent vector. If it has only $\x^*$ in the initial stratum, it must then have an open segment $\gamma(0,\epsilon)$, with $\gamma(0)=\x^*$, in a co-bounding stratum. Then it will still have a one-sided derivative at $\x^*$ which suffices to define the logarithm map. 

\vskip 6pt
\red{With the description of the carrier, as well as the results on the support, of a geodesic in the previous section, we are now in a position to derive and analyse its initial tangent vector, or equivalently $\log_{\x^*}(\x)$.} As in \cite{BLO1} and \cite{BLO2} for the space of trees, \red{our analysis will mainly involve} a modified version of the logarithm map. For this, since the tangent cones at various points in $\sigma$ are all parallel, we may parallel translate them to the cone point $o$, the origin in $\mathbb{R}^M$, to produce a common isometric copy $\CC_\sigma$. Then, since the coordinate vector of the point $\x^*$, which we also denote by $\x^*$, lies in the common factor $\mathbb{R}(E)$ of all the strata of $\CC_\sigma$, it makes sense to add it to $\log_{\x^*}(\x)$ and the result
\[\Phi(\x;\x^*)=\log_{\x^*}(\x)+\x^*\]
will also lie in $\CC_\sigma$. We shall refer to $\Phi$ as the \red{\textit{translated logarithm map} to distinguish it from the logarithm map itself. All the vectors $\Phi(\x;\x^*)$ being in the same space implies that the translated logarithm maps are directly comparable as $\x^*$ varies within an orthant and such comparability will be necessary later. Moreover, the difference between the two maps is such that all our analysis of $\Phi$ can easily be translated to that of the logarithm map itself.} 

Note that, although the origin corresponds to the cone point $o$ of the orthant space $\X^m$, $\CC_\sigma$ is not the tangent cone to $\X^m$ at $o$, neither being contained in the other, unless $\sigma=\{o\}$. \red{Note also that, when $\X^m$ is a tree space and $\x^*$ lies in a top-dimensional stratum, $\Phi(\x;\x^*)$ was called the modified logarithm map and was denoted by $\Phi_{\x^*}(\x)$ in \cite{BLO2}, and the permutation map $\pi$ there corresponds to the linear transformation $\jmath$ given by Definition $\ref{def0i}$.}

\red{The next theorem gives the expression for the translated logarithm map $\Phi(\,\cdot\,;\x^*)$ in terms of the projections, specified in Definition \ref{def0g}, onto various sets of axes appearing in the support of the geodesic from $\x^*$ to $\x$.} 

\begin{theorem} 
For any two points $\x^*$ and $\x$ in $\X^m$, let the sequences $\mathcal{A}=(A_0,\cdots,A_k)$ and $\mathcal{B}=(B_0,\cdots,B_k)$ of sets of axes form the support of the geodesic from $\x^*$ to $\x$. Then, the \red{translated} logarithm map $\Phi(\,\cdot\,;\x^*)$ at $\x^*$ is given by
\begin{eqnarray}
\red{\Phi(\x;\x^*)=\jmath\left(P_{B_0}(\x),-\frac{\|P_{B_1}(\x)\|}{\|P_{A_1}(\x^*)\|}P_{A_1}(\x^*),\cdots,-\frac{\|P_{B_k}(\x)\|}{\|P_{A_k}(\x^*)\|}P_{A_k}(\x^*)\right),}
\label{eqn0g}
\end{eqnarray}
\red{where $\jmath$ is the linear transformation given by Definition $\ref{def0i}$.}
\label{thm1}
\end{theorem}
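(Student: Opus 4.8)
The plan is to read off the formula \eqref{eqn0g} by computing the initial velocity $\gamma'(0)$ of the geodesic $\gamma\colon[0,1]\to\X^m$ from $\x^*=\gamma(0)$ to $\x=\gamma(1)$, parametrised proportionally to arc length so that $\|\gamma'(0)\|=d(\x^*,\x)$, and then translating by $\x^*$, since $\Phi(\x;\x^*)=\log_{\x^*}(\x)+\x^*=\gamma'(0)+\x^*$. Writing $a_j=\|P_{A_j}(\x^*)\|$ and $b_j=\|P_{B_j}(\x)\|$ for $1\le j\le k$, together with the unit vectors $\widehat a_j=P_{A_j}(\x^*)/a_j$ and $\widehat b_j=P_{B_j}(\x)/b_j$, I would decompose $\mathbb R^M$ orthogonally into the blocks $\mathbb R(A_0)$, $\mathbb R(A_j)$ and $\mathbb R(B_j)$ attached to the support and treat each component of $\gamma'(0)$ separately. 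The case $k=0$, where $\x^*$ and $\x$ lie in the closure of a common orthant, is immediate: $\gamma$ is then the Euclidean segment and $\Phi(\x;\x^*)=\x=P_{B_0}(\x)$, which is \eqref{eqn0g}.

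For $k\ge1$ I would exhibit $\gamma$ explicitly and then certify that it is \emph{the} geodesic. Along the common axes I set $P_{A_0}(\gamma(t))=(1-t)P_{A_0}(\x^*)+tP_{B_0}(\x)$; this is legitimate because every orthant $\O_i$ of \eqref{eqn0b} has $\O(A_0)$ as a factor, so the carrier region is a metric product $\mathbb R(A_0)\times\mathcal K$ and the $\mathbb R(A_0)$-component of any geodesic is a Euclidean straight line. Along each non-common block I let $\gamma$ move radially, $P_{A_j}(\gamma(t))=s_j(t)\widehat a_j$ and $P_{B_j}(\gamma(t))=r_j(t)\widehat b_j$, with $s_j(t)=\max\{0,a_j-(a_j+b_j)t\}$ and $r_j(t)=\max\{0,(a_j+b_j)t-a_j\}$, so that the $j$-th block switches from its $A_j$-face to its $B_j$-face at $\tau_j=a_j/(a_j+b_j)$. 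Condition \eqref{eqn2g} of Proposition~\ref{prop0a} says exactly that the $\tau_j$ are strictly increasing in $j$, which guarantees that $\gamma$ passes in turn through $\O_0,\O_1,\dots,\O_k$ and so lies in $\X^m$ with the prescribed carrier.

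It then remains to check that this piecewise-linear radial path is genuinely the geodesic. It is linear within each orthant, so by uniqueness of geodesics in the CAT(0)-space $\X^m$ it suffices to verify that it is locally geodesic at each stratum change, i.e.\ that the Alexandrov angle between the incoming and outgoing directions is $\pi$. At the transition at time $\tau_i$ only the $i$-th block changes, the incoming direction being $-\widehat a_i$ and the outgoing $+\widehat b_i$, while every other block passes through with unchanged velocity; writing the tangent cone at the transition point as the metric product of the tangent space to the common bounding face with the $0$-cone over its link, and using the CAT(0) convention that assigns angle $\pi$ where a geodesic changes stratum, a direct computation yields $\cos\angle=-1$. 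Hence $\gamma$ is the unique geodesic from $\x^*$ to $\x$. Differentiating at $t=0$ gives $\gamma'(0)=P_{B_0}(\x)-P_{A_0}(\x^*)$ on $\mathbb R(A_0)$, $\gamma'(0)=-(a_j+b_j)\widehat a_j=-\tfrac{a_j+b_j}{a_j}P_{A_j}(\x^*)$ on $\mathbb R(A_j)$, and $\gamma'(0)=0$ on each $\mathbb R(B_j)$, since the target blocks are dormant near $t=0$. Adding $\x^*$ collapses the $A_0$-term to $P_{B_0}(\x)$ and the $A_j$-term to $-\tfrac{b_j}{a_j}P_{A_j}(\x^*)=-\tfrac{\|P_{B_j}(\x)\|}{\|P_{A_j}(\x^*)\|}P_{A_j}(\x^*)$, which reassemble via $\jmath$ into \eqref{eqn0g}.

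The step I expect to be the crux is the angle-$\pi$ verification. Localising at the transition point reduces it to showing that the angular distance in the link between the directions $\widehat a_i$ and $\widehat b_i$ equals $\pi$, and this is precisely where condition \eqref{eqn2f} enters: it rules out an intervening orthant of the form \eqref{eqn0h} that would furnish a shortcut and make the angle strictly less than $\pi$, the same phenomenon controlled by the two-dimensional quadrant picture in the proof of Proposition~\ref{prop0a}. A secondary point, needed when the blocks $A_j,B_j$ have dimension greater than one, is that the geodesic really does move radially within each block; this can be confirmed directly by a length-comparison argument, since radialising each block (replacing $P_{A_j}(\gamma(t))$ by $\|P_{A_j}(\gamma(t))\|\widehat a_j$, and likewise for the $B_j$) leaves the path in the same sequence of orthants and cannot increase its length because $|\tfrac{d}{dt}\|v\||\le\|v'\|$, so uniqueness forces the radialised path to be $\gamma$. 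Finally, the degenerate case in which $\x^*$ lies in a stratum bounding $\O_0$, so that $\gamma$ enters $\O_0$ immediately, requires no change: by the monotonicity argument behind Proposition~\ref{prop0c} the only axes on which $\x^*$ can then vanish belong to $A_0=B_0$, and these are governed entirely by the flat product structure, so the same formula results.
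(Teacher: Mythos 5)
Your construction, the differentiation at $t=0$, and the final bookkeeping (adding $\x^*$ and applying $\jmath$) are all correct, and your construct-and-verify strategy is genuinely different from the paper's, which analyses the given geodesic directly rather than building a candidate. But the verification step --- that your piecewise-linear radial path is a geodesic --- has a real gap, and it sits exactly where you predicted the crux would be. First, the appeal to ``the CAT(0) convention that assigns angle $\pi$ where a geodesic changes stratum'' is circular: having Alexandrov angle $\pi$ at a stratum change is a property a path must be \emph{proved} to have in order to be a geodesic; it is not a convention you may impose on your candidate. Second, when one unpacks what must actually be shown, it is that the distance, in the link of the intermediate stratum at the transition point, between the incoming direction $\widehat a_i$ and the outgoing direction $\widehat b_i$ is at least $\pi$ (the product decomposition of the tangent cone reduces the angle computation to exactly this, since the components tangent to the intermediate stratum are exact negatives of each other). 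Condition \eqref{eqn2f} rules out precisely the shortcuts realised by a \emph{single} intervening orthant $\O'$ of the form \eqref{eqn0h}, i.e.\ one partition of $A_i$ and $B_i$; but in an all-right spherical flag complex a path of length less than $\pi$ from $\widehat a_i$ to $\widehat b_i$ can a priori pass through a chain of several maximal cells, corresponding to a chain of several orthants interposed between $\O_{i-1}$ and $\O_i$. Ruling these out is the hard content of the sufficiency half of Proposition \ref{prop0a} (the homotopy/simple-moves argument, the orthant-space version of the Owen--Provan theorem); it is neither ``a direct computation'' nor something you can simply cite from Proposition \ref{prop0a}, because that proposition characterises which sequences form the support and does not identify the explicit form, or the transition angles, of the geodesic.

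The fix is already contained in your secondary remarks, and it converts your argument into the paper's: apply your two auxiliary tools to the \emph{actual} geodesic $\gamma^*$, whose support $(\mathcal A,\mathcal B)$ is given by hypothesis, instead of to a constructed candidate. The metric-product splitting (which you invoke only for $A_0$) shows that every coordinate of $\gamma^*$ is affine on any interval where it is positive; an affine vector-valued path that ends at the origin is automatically radial, so $P_{A_j}(\gamma^*(t))=(1-t/s_j)P_{A_j}(\x^*)$ on $[0,s_j]$, where $s_j$ is the parameter at which $\gamma^*$ leaves $\O_{j-1}$, and similarly $P_{B_j}(\gamma^*(t))=\frac{t-s_j}{1-s_j}P_{B_j}(\x)$ afterwards (your radialisation/length-comparison argument gives the same conclusion). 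Constancy of the speed of $\gamma^*$ across the transition at $s_j$, where only block $j$ changes phase, then forces $\|P_{A_j}(\x^*)\|/s_j=\|P_{B_j}(\x)\|/(1-s_j)$, i.e.\ $s_j=a_j/(a_j+b_j)$ in your notation. Reading off $\dot\gamma^*(0)$ and adding $\x^*$ now yields \eqref{eqn0g} with nothing left to verify, because every step is a deduction about the geodesic you already know to exist and whose carrier is prescribed; this is in essence the paper's proof.
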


In particular, $\Phi(\,\cdot\,;\lambda\x^*)=\Phi(\,\cdot\,;\x^*)$ for any constant $\lambda>0$.

\vskip 6pt
\red{Recall that, if $k=0$, then $\x$ and $\x^*$ lie the closure of an orthant and the geodesic from $\x^*$ to $\x$ is a line segment in $\mathbb R^M$. In this case, $\Phi(\x;\x^*)=\jmath(\x)$. If $k>0$ and if $|A_i|=|B_i|=1$ for $1\leqslant i\leqslant k$, the form of the expression for $\Phi(\x;\x^*)$ is also similar to that of the corresponding translated logarithm map in a Euclidean space, after changing the axes $B_i$ to $-A_i$.} 

\begin{proof}
The orthogonal projection of $\gamma$ onto $\mathcal{O}(A_0)$ determines the component of the initial tangent vector to $\gamma$ that is tangent to $\mathcal{O}(A_0)$, namely 
\begin{eqnarray}
\red{v_0=P_{B_0}(\x)-P_{A_0}(\x^*)}\in\mathbb{R}(A_0).
\label{eqn0}
\end{eqnarray}
For the remaining coordinates, since the sets $A_i$ and $B_j$ above are all mutually disjoint, it follows that, for each $i$, the subspace $\mathbb{R}(A_i\cup B_i)$ is orthogonal to all $\mathbb{R}(A_j)$ and $\mathbb{R}(B_j)$ for $j\not=i$, so that the coordinates of the geodesic $\gamma$ that are positive with respect to the axes in $\mathbb{R}(A_i\cup B_i)$ are just those of the projection $\gamma_i$ of $\gamma$ onto that subspace.  If $s_i$ is the parameter such that $\gamma(s_i)\in\mathcal{O}_{i-1}\cap\mathcal{O}_i$, then $\red{P_{A_i}(\gamma(s))}\in\O(A_i)$ declines linearly from $\red{P_{A_i}(\gamma(0))=P_{A_i}(\x^*)}$ to $\red{P_{A_i}(\gamma(s_i))}=\bf{0}$. Then, the coordinates $\red{P_{B_i}(\gamma(s))}\in\O(B_i)$ increase linearly from zero at $\gamma(s_i)$ to $\red{P_{B_i}(\gamma(1))=P_{B_i}(\x)}$. Thus, the projected geodesic $\gamma_i$ \red{lies in the union of the orthogonal orthants $\O(A_i)$ and $\O(B_i)$ and hence} has length $\red{\|P_{A_i}(\x^*)\|+\|P_{B_i}(\x)\|}$. The initial tangent vector to $\gamma_i$ is parallel to \red{$-P_{A_i}(\x)$ and so is}
\[v_i=-\red{\frac{\|P_{A_i}(\x^*)\|+\|P_{B_i}(\x)\|}{\|P_{A_i}(\x^*)\|}P_{A_i}(\x^*)}.
\]
Hence, the initial tangent vector to $\gamma$ with norm $d(\x^*,\x)$ is represented by $(v_0,v_1,\cdots,v_k)$. However, this ordering of the coordinates, with those in $\mathbb{R}(A_i)$ preceding those of $\mathbb{R}(A_{i+1})$ for each $i$, requires \red{the linear transformation $\jmath$ to} obtain its representation with respect to the standard basis in $\mathbb{R}^M$. Then, the logarithm map at $\x^*$ will be
\[\log_{\x^*}:\x\mapsto\jmath\left(v_0,v_1,\cdots,v_k\right),\]
so that equation \eqref{eqn0g} follows from the coordinates $v_i$ since the coordinates of $\x^*$ are $\jmath\left(\red{P_{A_0}(\x^*),P_{A_1}(\x^*),\cdots,P_{A_k}(\x^*)}\right)$.
\end{proof}

\red{In the following, when we say that the expression \eqref{eqn0g} for $\Phi(\x_2;\x^*)$ takes the same form as the corresponding expression for $\Phi(\x_1;\x^*)$, we mean that the expression for $\Phi(\x_2;\x^*)$ can be obtained by replacing $\x_1$ by $\x_2$ in the expression \eqref{eqn0g} for $\Phi(\x_1;\x^*)$. Clearly, the form of the expression for $\Phi(\x;\x^*)$ will depend on the support $(\mathcal A,\mathcal B)$ of the geodesic from $\x^*$ to $\x$, noting that the roles that $\mathcal A$ and $\mathcal B$ play are not symmetric. The following example illustrates this feature where, although $\x$ lies in the same orthant in the second and third cases, the forms for $\Phi(\x;\x^*)$, as a function of $\x$, differ in the two cases. However, along the boundary between the light and dark grey regions, the two forms give the same result.}

\begin{example}\label{ex2}
\red{Consider $\X^2$ in $\mathbb R^5$, which was called $Q_5$ in \cite{BLO1}, consisting of five orthants as shown in Figure \ref{fig1}, where all five axes are mutually orthogonal.}
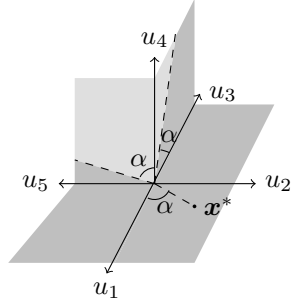
\begin{figure}
\begin{center}
\begin{tikzpicture} [scale=0.7]
\path[fill=gray!50] (0.5,0) -- (1.25,1.5) -- (1.25,3.5) -- (0.9,2.8) -- cycle;
\path[fill=gray!25] (0.5,0) -- (0.9,2.8) -- (0.5,2) -- cycle;
\path[fill=gray!25] (0.5,0) -- (0.5,2) -- (-1,2) -- (-1,0.45) -- cycle;
\path[fill=gray!50] (0.5,0) -- (-1,0.45) -- (-1,0) -- cycle;
\path[fill=gray!50] (0.5,0) -- (1.25,1.5) -- (2.75,1.5) -- (2,0) -- cycle;
\path[fill=gray!50] (0.5,0) -- (-0.25,-1.5) -- (1.25,-1.5) -- (2,0) -- cycle;
\path[fill=gray!50] (0.5,0) -- (-0.25,-1.5) -- (-2.25,-1.5) -- (-1,0) -- cycle;
\draw[<->] (-0.4,-1.7) node[below] {$u_1$} -- (1.35,1.7) node [above,right] {$u_3$};
\draw[<->] (-1.3,0) node [left] {$u_5$}  -- (2.4,0) node[right] {$u_2$};
\draw[->] (0.5,0) -- (0.5,2.4) node[above] {$u_4$};
\path[draw, dashed] (0.5,0) -- (1.25,-0.43);
\fill (1.25,-0.43) circle (1pt) node[below, right] {$\x^*$};
\path[draw, dashed] (0.5,0) -- (0.9,2.9);
\path[draw, dashed] (0.5,0) --  (-1,0.45);
\draw(0.5,0)+(245:0.3) arc (245:335:0.3) node at (0.7,-0.45) {$\alpha$};
\draw(0.5,0)+(90:0.3) arc (90:155:0.3) node at (0.2,0.45) {$\alpha$};
\draw(0.5,0)+(65:0.65) arc (65:80:0.65) node at (0.78,0.9) {$\alpha$};
\end{tikzpicture}
\end{center}
\caption{\red{$\X^2$ in $\mathbb R^5$ consisting of five orthants}}
\label{fig1}
\end{figure}
\red{The tangent cone to $\X^2$ at $\x^*=(x_1^*,x_2^*,0,0,0)$ indicated in Figure \ref{fig1} is the $(u_1,u_2)$-plane and that at the cone point $o$ is $\X^2$ itself. While $\Phi(\x;o)=\x$ for all $\x$, the expression for $\Phi(\x;\x^*)$ takes different forms depending the position of $\x$. For example, for any $\x=(0,x_2,x_3,0,0)$ in the orthant $\O(u_2,u_3)$,
\[\Phi(\x;\x^*)=(-x_3,x_2,0,0,0);\]
for $\x=(0,0,x_3,x_4,0)$ in the dark grey region of $\O(u_3,u_4)$, i.e. if the coordinates of $\x$ satisfy $x_4/x_3<\tan(\alpha)=x^*_2/x^*_1$, then 
\[\Phi(\x,\x^*)=(-x_3,-x_4,0,0,0);\]
However, if $\x=(0,0,x_3,x_4,0)$ lies in the light grey region of $\O(u_3,u_4)$, i.e. if the coordinates of $\x$ satisfy $x_4/x_3>\tan(\alpha)=x^*_2/x^*_1$, then 
\[\Phi(\x,\x^*)=-\frac{\|\x\|}{\|\x^*\|}(x^*_1,x^*_2,0,0,0).\] 
}
\red{In particular, for all $\x$ in the light grey region of $\X^2$, the vectors $\Phi(\x;\x^*)$ have the same direction $-\frac{1}{\|\x^*\|}(x^*_1,x^*_2,0,0,0)$ and the only difference between them lies in the length of this vector.}
\end{example}

The potential variation of the form of the expression \eqref{eqn0g} for the \red{translated} logarithm map, arising from the changes in the supports of the geodesics, is one of the main obstructions to generalising the theory for manifolds to orthant spaces, or more general stratified spaces. \red{To study this variation, we first note the following result, which is a direct consequence of Corollary \ref{cor0d}.}

\begin{corollary}
\red{If the support of the geodesic from $\x^*$ to $\x$ satisfies the conditions of Corollary $\ref{cor0d}$, then there is a neighbourhood $\mathcal N$ of $\x$ within its stratum such that, for any $\x'\in\mathcal N$, the form of the expression \eqref{eqn0g} for $\Phi(\x';\x^*)$ takes the same form of that for $\Phi(\x;\x^*)$.}
\label{cor0c}
\end{corollary}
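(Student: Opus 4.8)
The plan is to deduce the statement directly from Corollary \ref{cor0d} together with Theorem \ref{thm1}, observing that here $\x^*$ plays the role of the fixed endpoint $\x_1$ and the variable point plays the role of $\x_2$. First I would record that the hypothesis of the present corollary---that the support $(\mathcal A,\mathcal B)$ of the geodesic from $\x^*$ to $\x$ satisfies the strict-inequality conditions of Corollary \ref{cor0d}---is precisely what that corollary requires. Hence Corollary \ref{cor0d} supplies a neighbourhood $\mathcal N$ of $\x$ within its stratum such that, for every $\x'\in\mathcal N$, the support of the geodesic from $\x^*$ to $\x'$ is again $(\mathcal A,\mathcal B)$; in particular the same sequences of axis-sets $A_0,\cdots,A_k$ and $B_0,\cdots,B_k$ occur.

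The second step is to translate this constancy of support into constancy of the form of the translated logarithm map. By Theorem \ref{thm1}, the expression \eqref{eqn0g} for $\Phi(\x';\x^*)$ is built entirely from the projections $P_{A_i}(\x^*)$ and $P_{B_i}(\x')$ indexed by the sets appearing in the support of the geodesic from $\x^*$ to $\x'$. Since $\x^*$ is fixed, the factors $P_{A_i}(\x^*)$ and the norms $\|P_{A_i}(\x^*)\|$ are identical to those in the expression for $\Phi(\x;\x^*)$, while the factors $P_{B_i}(\x)$ are simply replaced by $P_{B_i}(\x')$. This is exactly the meaning, recalled before Example \ref{ex2}, of $\Phi(\x';\x^*)$ taking the same form as $\Phi(\x;\x^*)$: the expression for the former is obtained from that for the latter by substituting $\x'$ for $\x$.

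One bookkeeping point I would make explicit is that the leading set $A_0=B_0=E(\x^*,\x)$ is also preserved as $\x$ moves to $\x'$. By the remark following Proposition \ref{prop0c}, $E(\x^*,\x')$ depends only on the orthants in which $\x^*$ and $\x'$ lie; since $\x'$ lies in the same orthant as $\x$, the neighbourhood $\mathcal N$ being taken within that stratum, and $\x^*$ is unchanged, we have $E(\x^*,\x')=E(\x^*,\x)$, consistent with the first component $P_{B_0}(\x')$ of \eqref{eqn0g}. I expect no substantial obstacle here, the result being a direct consequence of the earlier work; the only care needed is in this translation step, namely to confirm that, with the support held fixed, the sole quantities in \eqref{eqn0g} that vary are the projections of the moving endpoint, so that the dependence on $\x'$ is exactly the prescribed substitution and the form of the expression is genuinely unchanged.
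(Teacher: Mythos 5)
Your proposal is correct and follows essentially the same route as the paper, which presents this corollary as a direct consequence of Corollary \ref{cor0d}: that corollary (with $\x_1=\x^*$, $\x_2=\x$) gives constancy of the support on a neighbourhood $\mathcal N$ of $\x$ within its stratum, and Theorem \ref{thm1} then shows the expression \eqref{eqn0g} depends on $\x'$ only through the projections $P_{B_i}(\x')$, so the form is unchanged. Your bookkeeping remark that $E(\x^*,\x')=E(\x^*,\x)$ matches the observation already made in the proof of Corollary \ref{cor0d} that $A_0=B_0$ is unchanged as the endpoint varies within its stratum.
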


\red{We now characterise, in terms of the two conditions on the support of a geodesic given in Proposition \ref{prop0a}, changes in the form of the expression \eqref{eqn0g} for $\Phi(\x;\x^*)$ when $\x$ varies locally. Although the roles played by these two conditions in determining the support of a geodesic are different, to some extent, they play a similar role in the change of the form of that expression. Replacing the inequality \eqref{eqn2g} or \eqref{eqn2f} by equality determines a quadratic co-dimension one hyper-surface. When two or more such hyper-surfaces meet, their normals are linearly independent so they intersect in surfaces of co-dimension at least two. Thus, it suffices to consider a point lying in a single such hyper-surface. Then, points on either side of the hyper-surface will have different supports for their geodesics from $\x^*$, but that will not always result in a change in the form the expression for $\Phi(\x;\x^*)$.}

\begin{proposition}
\red{Let $\x^*$ and $\x_0$ be two given points in $\X^m$, and let $(\mathcal{A},\mathcal{B})$ be the support of the geodesic from $\x^*$ to $\x_0$, where $\mathcal A=(A_0,A_1,\cdots,A_k)$ and $\mathcal B=(B_0,B_1,\cdots,B_k)$, and where $k>1$. Assume that $\x$ moves from $\x_0$, within its stratum, to a first point $\x_1$ such that, for $i=i_0>0$, the inequality \eqref{eqn2g}, with $\x_1,\x_2$ replaced by $\x^*,\x_1$ respectively, becomes an equality while all the other inequalities \eqref{eqn2g} and \eqref{eqn2f} remain strict. Then, the support $(\mathcal A',\mathcal B')$ of the geodesic from $\x^*$ to $\x_1$ has
\[\mathcal A'=(A_0,A_1,\cdots,A_{i_0-1},A_{i_0}\cup A_{i_0+1},A_{i_0+2},\cdots,A_k)\]
and similarly for $\mathcal B'$.}

\red{If the orthant}
\begin{eqnarray*}
\red{\O''=\O(B_0\cup B_1\cup\cdots\cup B_{i_0-1}\cup B_{i_0+1}\cup A_{i_0}\cup A_{i_0+2}\cup\cdots\cup A_k)}
\end{eqnarray*}   
\red{is contained in $\X^m$, then there is a neighbourhood $\mathcal N$ of $\x_1$ within its stratum such that, for all $\x\in\mathcal N$, the form of the expression \eqref{eqn0g} for $\Phi(\x;\x^*)$ is identical with that for $\Phi(\x_0;\x^*)$.}

\red{If $\O''$ is not an orthant of $\X^m$ then, in any neighbourhood $\mathcal N$ of $\x_1$ within its stratum, there are $\x'$ and $\x''$ such that the form for $\Phi(\x';\x^*)$ is the same as that for $\Phi(\x_0;\x^*)$ and that for $\Phi(\x'';\x^*)$ is determined by the support $(\mathcal A',\mathcal B')$. When $\mathcal N$ is sufficiently small, there are no other possibilities.}
\label{prop0d}
\end{proposition}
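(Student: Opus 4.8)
The plan is to identify the support of the geodesic from $\x^*$ to each $\x$ near $\x_1$ by means of Proposition \ref{prop0a}, and then to read off the form of $\Phi(\x;\x^*)$ from the expression \eqref{eqn0g} of Theorem \ref{thm1}. Write $\lambda$ for the common value, attained at $\x_1$, of the two ratios in \eqref{eqn2g} at $i=i_0$ (that is, with $\x_1,\x_2$ replaced by $\x^*,\x_1$). To obtain the support at $\x_1$ itself I would use the continuity of geodesics in the CAT(0)-space $\X^m$: as $\x\to\x_1$ along its path from $\x_0$, only the inequality \eqref{eqn2g} at $i_0$ degenerates, so precisely the segment of the carrier lying in $\O_{i_0}$ shrinks to a single point, $\O_{i_0}$ drops out, and the limiting carrier merges $\O_{i_0-1}$ and $\O_{i_0+1}$, giving $(\mathcal A',\mathcal B')$. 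The same follows from Proposition \ref{prop0a} directly: by orthogonality of the axes the merged ratio $\|P_{A_{i_0}\cup A_{i_0+1}}(\x^*)\|/\|P_{B_{i_0}\cup B_{i_0+1}}(\x_1)\|$ equals $\lambda$, which lies strictly between its neighbours since those inequalities \eqref{eqn2g} stay strict, so $(i)$ holds; and the partition $C_{i_0 1}=A_{i_0+1}$, $C_{i_0 2}=A_{i_0}$, $D_{i_0 1}=B_{i_0+1}$, $D_{i_0 2}=B_{i_0}$ of the merged block produces, via \eqref{eqn0h}, exactly the orthant $\O''$, for which \eqref{eqn2f} holds with equality at $\x_1$.

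Two elementary facts then drive the argument. First, a short computation using $\|P_{B_{i_0}}(\x)\|^2+\|P_{B_{i_0+1}}(\x)\|^2=\|P_{B_{i_0}\cup B_{i_0+1}}(\x)\|^2$, and similarly for the $A$'s, shows that on the hypersurface $H$ given by equality in \eqref{eqn2g} at $i_0$ the single merged term of \eqref{eqn0g} equals the sum of the two separate terms; hence the separated and merged forms agree on $H$, so $\Phi$ is continuous across $H$. Second, interchanging the consecutive blocks $A_{i_0},A_{i_0+1}$ (and $B_{i_0},B_{i_0+1}$) in the support merely swaps the two corresponding vectors in the argument list of $\jmath$ in \eqref{eqn0g}; since $\jmath$ sends the coordinates of $A_{i_0}$ and $A_{i_0+1}$ to their fixed, disjoint standard slots, their order in the list is immaterial and the expression \eqref{eqn0g} is unchanged. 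This $\jmath$-invariance is the conceptual heart of the first conclusion.

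For the case $\O''\subset\X^m$: on the side of $H$ containing $\x_0$ the inequality \eqref{eqn2g} at $i_0$ holds, the support is $(\mathcal A,\mathcal B)$, and $\Phi$ has the form of $\Phi(\x_0;\x^*)$; on the other side the reversed inequality is exactly the failure of \eqref{eqn2f} for the $\O''$-partition, so by the reasoning in the proof of Proposition \ref{prop0a} the geodesic passes through $\O''$ with positive length and the support becomes the swapped sequence obtained from $(\mathcal A,\mathcal B)$ by interchanging $A_{i_0},A_{i_0+1}$ and $B_{i_0},B_{i_0+1}$. By the $\jmath$-invariance this gives the identical form, and on $H$ by the continuity above, so the single separated form computes $\Phi(\x;\x^*)$ throughout a small $\mathcal N$. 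For the case $\O''\not\subset\X^m$: the reversed inequality again excludes $\O_{i_0}$, while the absence of $\O''$ forbids the swapped carrier, so the only remaining possibility, the merged support $(\mathcal A',\mathcal B')$, is forced; it satisfies the hypotheses of Proposition \ref{prop0a} near $\x_1$, the $\O''$-partition now imposing no constraint precisely because $\O''$ is absent. Thus $\Phi(\x'';\x^*)$ takes the merged form, which differs from that of $\Phi(\x_0;\x^*)$, so both forms occur in every neighbourhood of $\x_1$; and since all other inequalities stay strict near $\x_1$, Corollary \ref{cor0c} excludes further changes, giving `no other possibilities' for $\mathcal N$ small enough.

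I expect the main obstacle to be the bookkeeping of supports: matching the orthant $\O''$ of the statement with the orthant $\O'$ of condition $(ii)$, and verifying that on the far side of $H$ the merged support is genuinely forced in the case $\O''\not\subset\X^m$ — that is, that no partition of the merged block other than the $\O''$-partition can introduce an extra orthant into the carrier, which should follow by continuity from the strict conditions $(ii)$ holding at $\x_0$. Once these support computations are pinned down, the $\jmath$-invariance of \eqref{eqn0g} and the continuity of $\Phi$ across $H$ make the identification of the forms routine.
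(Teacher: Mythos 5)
Your treatment of the first conclusion (the merged support at $\x_1$) and of the case $\O''\subset\X^m$ follows the paper's own proof: the paper likewise drops $\O_{i_0}$ from the carrier via the degeneration of the projected angles, and uses the invariance of \eqref{eqn0g} under $\jmath$ to conclude that the swapped support $(A_0,\cdots,A_{i_0-1},A_{i_0+1},A_{i_0},A_{i_0+2},\cdots,A_k)$ gives the same form as at $\x_0$.

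In the case $\O''\not\subset\X^m$, however, there is a genuine gap. You claim that once the original carrier and the fully swapped carrier are excluded, "the only remaining possibility, the merged support $(\mathcal A',\mathcal B')$, is forced". This overlooks carriers that use a \emph{partially} swapped intermediate orthant: for non-trivial partitions $A_{i_0}=C_1\cup C_2$, $A_{i_0+1}=D_1\cup D_2$, $B_{i_0}=E_1\cup E_2$, $B_{i_0+1}=F_1\cup F_2$, the orthant $\widetilde{\O}=\O(B_0\cup\cdots\cup B_{i_0-1}\cup E_1\cup F_1\cup C_2\cup D_2\cup A_{i_0+2}\cup\cdots\cup A_k)$ may well lie in $\X^m$ even when $\O''$ does not, and a geodesic from $\x^*$ to a point beyond $\x_1$ could pass through it, producing a third form for $\Phi$. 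You do flag this as the main obstacle, but your proposed resolution — that it "should follow by continuity from the strict conditions $(ii)$ holding at $\x_0$" — fails: condition $(ii)$ of Proposition \ref{prop0a} for the support $(\mathcal A,\mathcal B)$ only constrains partitions of a \emph{single} block $A_i,B_i$ via the orthant \eqref{eqn0h}, whereas $\widetilde{\O}$ arises from simultaneous partitions of the two consecutive blocks $i_0$ and $i_0+1$. No inequality in force at $\x_0$ (or hypothesised strict at $\x_1$) refers to $\widetilde{\O}$ at all, so there is nothing for continuity to propagate.

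What the paper actually does here is the hardest part of its proof, and it cannot be replaced by continuity. Supposing the geodesic to $\x_2$ uses $\widetilde{\O}$, Proposition \ref{prop0a}$(i)$ gives, in the limit $\x_2\rightarrow\x_1$, the two-sided bound \eqref{eqn2a} on the combined ratios. The key step is then to invoke the flag condition, Definition \ref{def0a}$(ii)$: since the closures of $\O_{i_0-1}$, $\O_{i_0}$ and $\widetilde{\O}$ lie in $\X^m$, every $2$-dimensional orthant in the closure of $\widetilde{\O}^*=\O(B_0\cup\cdots\cup B_{i_0-1}\cup E_1\cup C_2\cup A_{i_0+1}\cup\cdots\cup A_k)$ lies in $\X^m$, hence $\widetilde{\O}^*$ itself does. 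Only now does the hypothesis that all inequalities \eqref{eqn2f} remain strict at $\x_1$ apply — to the single-block partition $C_1\cup C_2$ of $A_{i_0}$, $E_1\cup E_2$ of $B_{i_0}$, whose orthant \eqref{eqn0h} is exactly $\widetilde{\O}^*$ — yielding \eqref{eqn2c}, and symmetrically \eqref{eqn2d}; combined with the equality at $i_0$ these contradict \eqref{eqn2a} via a mediant-type inequality for the ratios. Without this use of the structural flag condition of orthant spaces, the partial swaps are not excluded, and your "no other possibilities" claim — hence the stated dichotomy in the second case — remains unproven.
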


\begin{proof}
\red{By Corollary \ref{cor0c}, the form of the expression \eqref{eqn0g} will remain constant, as long as the inequalities \eqref{eqn2g} and \eqref{eqn2f} remain strict. However, for $\x=\x_1$, on account of the equality \eqref{eqn2g} for $i_0$ at $\x=\x_1$, the angles $\theta$ and $\phi$, in the projected diagram of Figure \ref{fig2f}, will be equal where the projections are as specified in the proof of Proposition \ref{prop0a}.} 
\begin{figure}
\begin{center}
\begin{tikzpicture}[scale=0.7]
\draw [<->] (-4.4,0) -- (-0.9,0);
\draw [<->] (-2.5,-1.1) -- (-2.5,1.5);
\node at (-2.5,1.8) {$\scriptsize{P_{A_{i_0+1}}(\x^*)}$};
\node at (-2,-1.5) {$\scriptsize{P_{B_{i_0+1}}(\x_0)}$};
\node at (0,0) {$\scriptsize{P_{A_{i_0}}(\x^*)}$};
\node at (-5.4,0) {$\scriptsize{P_{B_{i_0}}(\x_0)}$};
\draw [red] (-1,1.4) -- (-2.5,0);
\draw [dashed, red] (-1,1.4) -- (-4.3,-1);
\draw [red] (-2.5,0) -- (-4,-1.4);
\fill [red] (-4,-1.4) circle (1.5pt);
\node [red] at (-4.5,-1.6) {$\scriptsize{p(\x_1)}$};
\fill [red] (-1,1.4) circle (1.5pt) node[above,right, red] {$\scriptsize{p(\x^*)}$};
\fill [red] (-4.3,-1) circle (1.5pt) node[left, red] {$\scriptsize{p(\x_0)}$};
\fill [red] (-3.5,-1.5) circle (1.5pt) node[left,below, red] {$\scriptsize{p(\x_2)}$};
\draw [dashed, red] (-1,1.4) -- (-3.5,-1.5);
\draw [red] (-2.1,0) arc [radius=0.5, start angle=0, end angle= 35]; 
\node [red] at (-1.85,0.15) {$\scriptsize\theta$};
\draw [red] (-2.85,0) arc [radius=0.6, start angle=180, end angle=205];
\node [red] at (-3,-0.2) {$\scriptsize\phi$};
\node at (-2.5,-2.8) {$(a)$};
\path[fill=gray!30] (5.5,0) -- (8,0) -- (8,-2) -- (5.5,-2) -- cycle;
\draw [<->] (3.6,0) -- (7.1,0);
\draw [<->] (5.5,-1.1) -- (5.5,1.5);
\node at (5.5,1.8) {$\scriptsize{P_{A_{i_0+1}}(\x^*)}$};
\node at (6,-1.5) {$\scriptsize{P_{B_{i_0+1}}(\x_0)}$};
\node at (8,0) {$\scriptsize{P_{A_{i_0}}(\x^*)}$};
\node at (2.6,0) {$\scriptsize{P_{B_{i_0}}(\x_0)}$};
\draw [red] (7,1.4) -- (5.5,0);
\draw [dashed, red] (7,1.4) -- (3.7,-1);
\draw [red] (5.5,0) -- (4,-1.4);
\fill [red] (4,-1.4) circle (1.5pt);
\node [red] at (3.5,-1.6) {$\scriptsize{p(\x_1)}$};
\fill [red] (7,1.4) circle (1.5pt) node[above,right, red] {$\scriptsize{p(\x^*)}$};
\fill [red] (3.7,-1) circle (1.5pt) node[left, red] {$\scriptsize{p(\x_0)}$};
\fill [red] (4.5,-1.5) circle (1.5pt) node[left,below, red] {$\scriptsize{p(\x_2)}$};
\draw [dashed, red] (5.5,0) -- (4.5,-1.5);
\draw [red] (5.9,0) arc [radius=0.5, start angle=0, end angle= 35]; 
\node [red] at (6.15,0.15) {$\scriptsize{\theta}$};
\draw [red] (5.15,0) arc [radius=0.6, start angle=180, end angle=205];
\node [red] at (5,-0.2) {$\scriptsize{\phi}$};
\node at (5.5,-2.8) {$(b)$};
\end{tikzpicture}
\end{center}
\caption{Change of carrier}
\label{fig2f}
\end{figure}
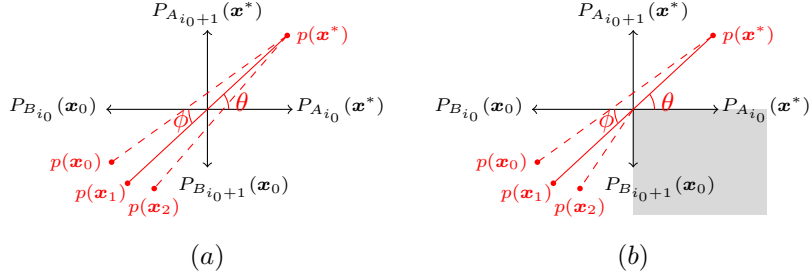
\red{Consequently at $\x_1$, $\O_{i_0}$ will drop out of the carrier, where $\O_i$ is defined by \eqref{eqn0b}, and, by the continuity of geodesics, the support of the geodesic from $\x^*$ to $\x_1$ will be $(\mathcal A',\mathcal B')$.}

\red{Now, let $\x$ continue to move past $\x_1$ to $\x_2$, remaining sufficiently close to $\x_1$ and having projection $p(\x_2)=P_{B_{i_0}\cup B_{i_0+1}}(\x_2)$ in Figure \ref{fig2f} lying on the opposite side to $p(\x_0)=P_{B_{i_0}\cup B_{i_0+1}}(\x_0)$ of the ray from the origin to $p(\x_1)=P_{B_{i_0}\cup B_{i_0+1}}(\x_1)$. If $\O''$ is contained in $\X^m$, the projection of the geodesic from $\x^*$ to $\x_2$ would be, as in Figure \ref{fig2f}$(a)$, the `straight' line from $p(\x^*)=P_{A_{i_0}\cup A_{i_0+1}}(\x^*)$ to $p(\x_2)$ passing through the planar quadrant $\Pi_0$ determined by $\P_{A_{i_0}}(\x^*)$ and $P_{B_{i_0}}(\x_0)$. This would imply replacing $\O_{i_0}$ in the carrier by $\O''$ with the resulting support for the geodesic from $\x^*$ to $\x_2$ being $(\mathcal A'',\mathcal B'')$, where 
\[\mathcal A''=(A_0,A_1,\cdots,A_{i_0-1},A_{i_0+1},A_{i_0},A_{i_0+2},\cdots,A_k)\]
and similarly for $\mathcal B''$. In this case, the application of the linear transformation $\jmath$ in the expression for $\Phi(\,\cdot\,;\x^*)$ implies that, for such $\x_2$, the form of the expression \eqref{eqn0g} for $\Phi(\x_2;\x^*)$ is identical with that for $\Phi(\x_0;\x^*)$.}

\red{Assume now that $\O''$ is not an orthant of $\X^m$. There might still be an intermediate orthant between $\O_{i_0-1}$ and $\O_{i_0+1}$ arrived at by non-trivial partitions $A_{i_0}=C_1\cup C_2$, $A_{i_0+1}=D_1\cup D_2$, $B_{i_0}=E_1\cup E_2$ and $B_{i_0+1}=F_1\cup F_2$ such that the orthant
\[\widetilde{\O}=\O(B_0\cup\cdots\cup B_{i_0-1}\cup E_1\cup F_1\cup C_2\cup D_2\cup A_{i_0+2}\cup\cdots\cup A_k)\]
is contained in $\X^m$ and provides a shorter path between $\O_{i_0-1}$ and $\O_{i_0+1}$. In which case, by Proposition \ref{prop0a}$(i)$, we must have
\[\frac{\|P_{C_1\cup D_1}(\x^*)\|}{\|P_{E_1\cup F_1}(\x_2)\|}<\frac{\|P_{C_2\cup D_2}(\x^*)\|}{\|P_{E_2\cup F_2}(\x_2)\|}.\]
This would result in 
\[\frac{\|P_{C_1\cup D_1}(\x^*)\|}{\|P_{E_1\cup F_1}(\x_2)\|}<\frac{\|P_{A_{i_0}\cup A_{i_0+1}}(\x^*)\|}{\|P_{B_{i_0}\cup B_{i_0+1}}(\x_2)\|}<\frac{\|P_{C_2\cup D_2}(\x^*)\|}{\|P_{E_2\cup F_2}(\x_2)\|}\]
and, taking the limit as $\x_2\rightarrow\x_1$,}
\begin{eqnarray}
\red{\frac{\|P_{C_1\cup D_1}(\x^*)\|}{\|P_{E_1\cup F_1}(\x_1)\|}\leqslant\frac{\|P_{A_{i_0}\cup A_{i_0+1}}(\x^*)\|}{\|P_{B_{i_0}\cup B_{i_0+1}}(\x_1)\|}\leqslant\frac{\|P_{C_2\cup D_2}(\x^*)\|}{\|P_{E_2\cup F_2}(\x_1)\|}.}
\label{eqn2a}
\end{eqnarray}
\red{On the other hand, the closures of the orthants $\O_{i_0-1}$ and $\widetilde{\O}$ being in $\X^m$ ensure that all 2-dimensional orthants in the closure of 
\[\widetilde{\O}^*=\O(B_0\cup\cdots\cup B_{i_0-1}\cup E_1\cup C_2\cup A_{i_0+1}\cup\cdots\cup A_k)\]
are in $\X^m$ and hence, by Definition \ref{def0a}, so too is $\widetilde{\O}^*$ itself. Then, by the assumption of uniqueness of the equality at $\x_1$ of the proposition, we must have by Proposition \ref{prop0a}$(ii)$ that} 
\begin{eqnarray}
\red{\frac{\|P_{C_1}(\x^*)\|}{\|P_{E_1}(\x_1)\|}>\frac{\|P_{A_{i_0}}(\x^*)\|}{\|P_{B_{i_0}}(\x_1)\|}>\frac{\|P_{C_2}(\x^*)\|}{\|P_{E_2}(\x_1)\|}.}
\label{eqn2c}
\end{eqnarray}
\red{Similarly, by considering the orthant $\O(B_0\cup\cdots\cup B_{i_0}\cup F_1\cup D_2\cup A_{i_0+2}\cup\cdots\cup A_k)$, we get}
\begin{eqnarray}
\red{\frac{\|P_{D_1}(\x^*)\|}{\|P_{F_1}(\x_1)\|}>\frac{\|P_{A_{i_0+1}}(\x^*)\|}{\|P_{B_{i_0+1}}(\x_1)\|}>\frac{\|P_{D_2}(\x^*)\|}{\|P_{F_2}(\x_1)\|}.}
\label{eqn2d}
\end{eqnarray}
\red{Since, by assumption, $\O_{i_0}$ drops out of the carrier at $\x_1$, we also have
\[\frac{\|P_{A_{i_0}}(\x^*)\|}{\|P_{B_{i_0}}(\x_1)\|}=\frac{\|P_{A_{i_0+1}}(\x^*)\|}{\|P_{B_{i_0+1}}(\x_1)\|},\]
so that, combining \eqref{eqn2c} and \eqref{eqn2d}, we have
\[\frac{\|P_{C_1\cup D_1}(\x^*)\|}{\|P_{E_1\cup F_1}(\x_1)\|}>\frac{\|P_{A_{i_0}\cup A_{i_0+1}}(\x^*)\|}{\|P_{B_{i_0}\cup B_{i_0+1}}(\x_1)\|}>\frac{\|P_{C_2\cup D_2}(\x^*)\|}{\|P_{E_2\cup F_2}(\x_1)\|},\]
contradicting \eqref{eqn2a}.}

\red{Thus, if $\O''$ is not contained in $\X^m$, the projection of the geodesic from $\x^*$ to $\x_2$ continues to pass through the origin, as shown in Figure \ref{fig2f}$(b)$, and the carrier remains as it was for $\x_1$, where the support is $(\mathcal A',\mathcal B')$ given above. In this case, the form of the expression \eqref{eqn0g} for $\Phi(\x_2;\x^*)$ clearly differs from that for $\Phi(\x_0;\x^*)$.}
\end{proof}

\red{Note that the equality \eqref{eqn2g} for $i=i_0$ at $\x_1$ and the mutual orthogonality of all the axes together imply that}
\begin{eqnarray*}
&&\left(\frac{\|P_{B_{i_0}}(\x_1)\|}{\|P_{A_{i_0}}(\x^*)\|}P_{A_{i_0}}(\x^*),\frac{\|P_{B_{i_0+1}}(\x_1)\|}{\|P_{A_{i_0+1}}(\x^*)\|}P_{A_{i_0+1}}(\x^*)\right)\\
&=&\frac{\|P_{B_{i_0}\cup B_{i_0+1}}(\x_1)\|}{\|P_{A_{i_0}\cup A_{i_0+1}}(\x^*)\|}P_{A_{i_0}\cup A_{i_0+1}}(\x^*).
\end{eqnarray*}
\red{This confirms that the form of the expression for $\Phi(\x_0;\x^*)$ is still valid for $\Phi(\x_1;\x^*)$, as expected by the continuity of geodesics. Similarly, the form of the expression for $\Phi(\x_2;\x^*)$ is still valid for $\Phi(\x_1;\x^*)$ whether or not the orthant $\O_{i_0}$ has been replaced by $\O''$.}

\vskip 6pt
\red{A similar argument to that for the proof of Proposition \ref{prop0d} gives the following complementary result.}

\begin{proposition}
\red{Let $\x^*$ and $\x_1$ be two given points in $\X^m$, and let $(\mathcal{A},\mathcal{B})$ be the support of the geodesic from $\x^*$ to $\x_1$, where $\mathcal A=(A_0,A_1,\cdots,A_k)$ and $\mathcal B=(B_0,B_1,\cdots,B_k)$, and where $k>0$. Assume that all inequalities \eqref{eqn2g} and \eqref{eqn2f}, with $\x_1,\x_2$ replaced by $\x^*,\x_1$ respectively, are strict except that, for $i=i_0>0$ and unique non-trivial partitions $C_{i_01}\cup C_{i_02}$ for $A_{i_0}$ and $D_{i_01}\cup D_{i_02}$ for $B_{i_0}$, \eqref{eqn2f} is an equality and that the corresponding orthant $\O'$ given by \eqref{eqn0h} with $i=i_0$ is contained in $\X^m$.}

\red{If the orthant  
\begin{eqnarray}
\O'''=\O(B_0\cup\cdots\cup B_{i_0-1}\cup D_{i_02}\cup C_{i_01}\cup A_{i_0+1}\cup\cdots\cup A_k)
\label{eqn2b}
\end{eqnarray}
is contained in $\X^m$, there is a neighbourhood $\mathcal N$ of $\x_1$ within its stratum such that the form of the expression for $\Phi(\x;\x^*)$ is the same, for all $\x\in \mathcal N$. Then, the common form of the expression for $\Phi(\x;\x^*)$ is determined by $(\mathcal A',\mathcal B')$, where 
\[\mathcal A'=(A_0,\cdots,A_{i_0-1},C_{i_01},C_{i_02},A_{i_0+1},\cdots,A_k)\]
and similarly for $\mathcal B'$.}

\red{If $\O'''$ is not an orthant of $\X^m$ then, in any neighbourhood $\mathcal N$ of $\x_1$ within its stratum, there are $\x'$ and $\x''$ such that the form for $\Phi(\x';\x^*)$ is the same as that for $\Phi(\x_1;\x^*)$ and that for $\Phi(\x'';\x^*)$ is determined by $(\mathcal A',\mathcal B')$. When $\mathcal N$ is sufficiently small, there are no other possibilities.}
\label{prop0e}
\end{proposition}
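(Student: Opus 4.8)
The plan is to mirror the proof of Proposition \ref{prop0d}, with the inequality \eqref{eqn2f} (condition $(ii)$ of Proposition \ref{prop0a}) now playing the role that \eqref{eqn2g} (condition $(i)$) played there. By Corollary \ref{cor0c} the form of the expression \eqref{eqn0g} is locally constant wherever all the defining inequalities remain strict, so the uniqueness hypothesis reduces everything to the single hypersurface on which \eqref{eqn2f} becomes an equality for $i=i_0$ and the given partition. I would first record the geometric content of that equality: arguing exactly as for condition $(ii)$ in the proof of Proposition \ref{prop0a}, at $\x_1$ the projection of the geodesic from $\x^*$ to $\x_1$ passes through the point at which all the $A_{i_0}$-coordinates vanish simultaneously, the common vertex of $\O_{i_0}$ and of the two intermediate orthants $\O'$ and $\O'''$; thus neither of these has positive length in the carrier, and the support at $\x_1$ is still $(\mathcal A,\mathcal B)$ with $\O_{i_0}$ present.

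I would then verify that at $\x_1$ the two readings of \eqref{eqn0g}, from $(\mathcal A,\mathcal B)$ and from $(\mathcal A',\mathcal B')$, coincide; this is the analogue of the displayed identity following Proposition \ref{prop0d}. Writing $\mu$ for the common value $\|P_{D_{i_01}}(\x_1)\|/\|P_{C_{i_01}}(\x^*)\|=\|P_{D_{i_02}}(\x_1)\|/\|P_{C_{i_02}}(\x^*)\|$ given by \eqref{eqn2f}, orthogonality of the axes yields $\|P_{B_{i_0}}(\x_1)\|=\mu\,\|P_{A_{i_0}}(\x^*)\|$, so the single unsplit term $-\mu\,P_{A_{i_0}}(\x^*)$ is precisely the concatenation under $\jmath$ of the split terms $-\mu\,P_{C_{i_01}}(\x^*)$ and $-\mu\,P_{C_{i_02}}(\x^*)$. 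Hence $\Phi(\x_1;\x^*)$ may be read from either support and, by Corollary \ref{cor0c}, each form persists on the open region where the inequalities defining it stay strict. Since passing through $\O'$ or through $\O'''$ realises the two orderings of the same split, differing only by a permutation absorbed by $\jmath$, both give the single form determined by $(\mathcal A',\mathcal B')$.

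The key observation is that if both $\O'$ and $\O'''$ lie in $\X^m$, then condition $(ii)$ applied to the two labellings of the partition imposes \eqref{eqn2f} together with its reverse, forcing equality; so the support $(\mathcal A,\mathcal B)$ with $\O_{i_0}$ in the carrier can hold only on the hypersurface itself, while on one side the geodesic enters $\O'$ and on the other it enters $\O'''$. With the consistency of forms this gives the first assertion: when $\O'''\subset\X^m$ the expression \eqref{eqn0g} takes the single form determined by $(\mathcal A',\mathcal B')$ throughout a neighbourhood $\mathcal N$. When $\O'''\not\subset\X^m$, reversing \eqref{eqn2f} still opens the shortcut through $\O'$, which is in $\X^m$ by hypothesis, producing a point $\x''$ with the split form $(\mathcal A',\mathcal B')$; on the side where \eqref{eqn2f} is strict the orthant $\O'''$ is unavailable, and I must show that the geodesic then keeps the vertex, retaining the unsplit form of $\Phi(\x_1;\x^*)$, which is the point $\x'$.

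The main obstacle is this last exclusion, the analogue of the $\widetilde\O$--$\widetilde\O^*$ step of Proposition \ref{prop0d}: ruling out every further intermediate orthant on the strict side. Here I would note that any candidate intermediate orthant between $\O_{i_0-1}$ and $\O_{i_0}$ arises from some labelled partition of $A_{i_0}$ and $B_{i_0}$ via \eqref{eqn0h}, and enters the carrier exactly when the corresponding instance of \eqref{eqn2f} is reversed. By the forcing observation above, no partition other than the $i_0$-one can have both its labellings admissible in $\X^m$, so each candidate has at most one admissible labelling, whose inequality is strict at $\x_1$ by the uniqueness hypothesis and hence, by continuity, throughout a small enough $\mathcal N$; together with the strictness there of all the inequalities \eqref{eqn2g} (ruling out both a merger of $\O_{i_0}$ with a neighbour and a shortcut spanning $\O_{i_0-1}$ to $\O_{i_0+1}$) and of \eqref{eqn2f} for $i\ne i_0$, this leaves only the two stated possibilities. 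Unlike Proposition \ref{prop0d}, where merging two strata produced partition inequalities lying outside the original support, every inequality relevant here already belongs to the family defining $(\mathcal A,\mathcal B)$, so the uniqueness hypothesis can be invoked directly rather than through an explicit contradiction.
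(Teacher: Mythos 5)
Your proposal is correct and takes essentially the paper's approach: the paper's own proof of Proposition \ref{prop0e} consists of the single remark that a similar argument to that for Proposition \ref{prop0d} applies, and your argument is exactly that mirrored proof --- the agreement of the split and unsplit forms at $\x_1$, the forcing observation when both $\O'$ and $\O'''$ lie in $\X^m$, and the exclusion of every other intermediate orthant via the uniqueness hypothesis and continuity. Your closing remark, that here (unlike in Proposition \ref{prop0d}) every relevant partition inequality already belongs to the family defining $(\mathcal A,\mathcal B)$ so that no analogue of the $\widetilde\O$--$\widetilde\O^*$ contradiction is required, is also accurate.
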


The carrier of the geodesic from $\x^*$ to $\x$ will also change when $\x$ moves from one stratum to another which necessarily involves, as initial, final or intermediate stratum, a stratum of locally positive co-dimension. The set of all such strata, together with the quadratic hyper-surfaces determined by \red{equalities in each of the relevant equations \eqref{eqn2g}}, form the defining boundaries for the (\textit{pre})-\textit{vistal polyhedral subdivision}, with respect to $\x^*$, in \cite{MOP}. The points in any component of the complement of these surfaces all have the same carrier. However, \red{for our analysis,} we shall only be concerned with changes in the forms of the expressions taken by $\log_{\x^*}(\x)$, or equivalently by $\Phi(\x;\x^*)$, \red{when $\x$ or $\x^*$ vary within their strata} rather than changes in the underlying carrier. \red{For this, we note that the results in Propositions \ref{prop0d} and \ref{prop0e} where the changed support must be used to obtain the correct expression for $\Phi(\x;\x^*)$ are reflections of each other where an othant is removed or introduced, respectively, in the carrier. Thus, we may encapsulate as follows the hyper-surfaces across which, though not at which, it is necessary to take account of the change of support to obtain the correct value for the logarithm map.}

\begin{definition}
Given a point $\x^*\in\X^m$, $\mathcal{D}_{\x^*}$ denotes the set that consists of all points $\x\in\X^m$ for which the support $(\mathcal{A},\mathcal{B})$, where $\mathcal{A}=(A_0,\cdots,A_k)$ and $\mathcal{B}=(B_0,\cdots,B_k)$, of the geodesic from $\x^*$ to $\x$ has the property that, for one or more \red{$i=i_0>0$, there are non-trivial partitions $A_{i_0}=C_{i_01}\cup C_{i_02}$ and $B_{i_0}=D_{i_01}\cup D_{i_02}$ with 
\begin{eqnarray}
\frac{\|P_{C_{i_01}}(\x^*)\|}{\|P_{D_{i_01}}(\x)\|}=\frac{\|P_{C_{i_02}}(\x^*)\|}{\|P_{D_{i_02}}(\x)\|},
\label{eqn2e}
\end{eqnarray}
where the corresponding orthant $\O'$ of \eqref{eqn0h} is contained in $\X^m$, but $\O'''$ of \eqref{eqn2b} is not.} 
\label{def1g}
\end{definition}

In view of the symmetry that reverses the geodesics at the same time as it reverses the order of the strata and interchanges the roles of the sequences $\mathcal A$ and $\mathcal B$ of edge sets in the support, the definition is symmetric: $\x\in\mathcal{D}_{\x^*}$ if and only if $\x^*\in\mathcal{D}_{\x}$. Since each stratum is a Euclidean orthant, it is preserved under multiplication by $\lambda>0$ in $\mathbb{R}^M$ which also multiplies the length of each curve by $\lambda$. Then, since the geodesic $\gamma$ joining $\x^*$ to $\x$ is the shortest curve through the strata of $\X^m$ from $\x^*$ to $\x$, it follows that $\gamma$ is mapped onto the geodesic from $\lambda\x^*$ to $\lambda\x$. In particular, these two geodesics have the same carrier. Thus, $\mathcal{D}_{\lambda\x^*}=\lambda\mathcal{D}_{\x^*}$ and, since the equations \eqref{eqn2e} are homogeneous, $\mathcal{D}_{\x^*}=\lambda\mathcal{D}_{\x^*}$.

\vskip 6pt
The pseudo-partition of $\X^m$ with respect to $\x^*$ determined by $\mathcal{D}_{\x^*}$ gives rise to a polyhedral subdivision of each stratum by restriction. It is coarser than the (pre)-vistal subdivision of \cite{MOP} and, if $\X^m$ is a tree space and if $\x^*$ lies in a top-dimensional stratum, it is equivalent to the polyhedral subdivision defined in \cite{BLO2}. 
 
\section{Limits, projections and derivatives}

We now turn to certain limits and projections of the \red{translated} logarithm map that, in particular, will enable us to calculate the directional derivatives we require. 

\vskip 6pt
Firstly, we obtain an expression for the limit of the \red{translated} logarithm map as the reference point $\x^*$ moves along a geodesic. For a vector $\w$ in the tangent cone \red{to $\X^m$} at $\x^*$, write $\x^*(\lambda,\w)$ for the point distant $\lambda\|\w\|$ along the geodesic $\gamma$ starting at $\x^*$ with initial tangent vector $\w$. Then, we have the following result.

\begin{theorem}
Let $\sigma=\mathcal{O}(E)$ be a stratum of $\X^m$, $\x^*\in\sigma$ and $\x$ be a fixed choice of point anywhere in $\X^m$.
\begin{enumerate}
\item[$(i)$] If $\w\in\mathbb{R}(E)$ is tangent to $\sigma$ at $\x^*$, then
\[\lim_{\lambda\rightarrow0+}\Phi(\x;\x^*(\lambda,\w))=\Phi(\x;\x^*).\]
\item[$(ii)$] If $\sigma$ bounds $\tau=\mathcal{O}(E\cup F)$ in $\X^m$  and $\w_\tau\in\mathbb{R}(E)\times\O(F)$ is tangent to $\tau$ at $\x^*$, then the limit 
\begin{eqnarray}
\Psi(\x,\w_\tau;\x^*)=\lim\limits_{\lambda\rightarrow0+}\Phi(\x;\x^*(\lambda,\w_\tau))
\label{eqn3f}
\end{eqnarray} 
exists. Moreover, there exist $\epsilon>0$ and sequences $\mathcal{A}=(A_0,A_1,\cdots,A_k)$ and $\mathcal{B}=(B_0,B_1,\cdots,B_k)$ of sets of axes such that, for each $\lambda\in(0,\epsilon)$, $(\mathcal{A},\mathcal{B})$ forms the support of the geodesic from $\x^*(\lambda,\w_\tau)$ to $\x$. In terms of these $\mathcal A$ and $\mathcal B$,
\begin{eqnarray}
\red{\Psi(\x,\w_\tau;\x^*)=\jmath\left(P_{B_0}(\x),-\frac{\|P_{B_1}(\x)\|}{\|W_1\|}W_1,\cdots,-\frac{\|P_{B_k}(\x)}{\|W_k\|}W_k\right),}
\label{eqn3g}
\end{eqnarray}
where \red{$W_i=P_{A_i\cap E}(\x^*)$, unless $P_{A_i\cap E}(\x^*)=0$, in which case $W_i=P_{A_i\cap F}(\w_\tau)$,} the projection of $\w_\tau$ on $\mathbb{R}(A_i)$, and $\jmath$ \red{is the linear transformation given by Definition $\ref{def0i}$}.
\end{enumerate}
\label{thm2}
\end{theorem}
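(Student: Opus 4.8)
The plan is to treat both parts through a single mechanism: in each case the moving reference point $\x^*(\lambda,\w)=\x^*+\lambda\w$ (with $\w=\w_\tau$ in part $(ii)$) is, for all sufficiently small $\lambda>0$, a straight segment, since $\sigma$ in part $(i)$, respectively $\tau$ in part $(ii)$, is totally geodesic and contains the relevant initial direction. In part $(i)$ the segment stays in $\sigma$, with $E$-coordinates $P_E(\x^*)+\lambda\w$; in part $(ii)$ it enters $\tau$, with $E$-coordinates $P_E(\x^*)+\lambda P_E(\w_\tau)$ and $F$-coordinates $\lambda P_F(\w_\tau)$. The heart of the argument is to show that the support of the geodesic from $\x^*(\lambda,\w)$ to $\x$ is eventually constant in $\lambda$; once this is known, the limit is computed by passing to the limit term by term in the expression \eqref{eqn0g} of Theorem \ref{thm1}.

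I would prove the stabilisation first, as it is the crux. Since $\x^*(\lambda,\w)$ lies in one fixed orthant for every $\lambda\in(0,\epsilon)$, Proposition \ref{prop0c} and the remark following it show that $A_0=B_0=E(\x^*(\lambda,\w),\x)$ is independent of $\lambda$. For the remaining sets, note that for any set $S$ of axes $\|P_S(\x^*+\lambda\w)\|^2=\|P_S(\x^*)\|^2+2\lambda\langle P_S(\x^*),P_S(\w)\rangle+\lambda^2\|P_S(\w)\|^2$ is a quadratic polynomial in $\lambda$. By Proposition \ref{prop0a}, a candidate $(\mathcal A,\mathcal B)$ is the support precisely when finitely many inequalities of the types \eqref{eqn2g} and \eqref{eqn2f} hold; since all the norms are nonnegative, cross-multiplying by the positive denominators and squaring turns each into a polynomial inequality in $\lambda$, whose solution set in $(0,\epsilon)$ is a finite union of intervals. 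As $\X^m$ has only finitely many strata there are only finitely many candidate supports, and, the geodesic being unique, exactly one of them is correct for each $\lambda$. These finitely many disjoint solution sets therefore partition $(0,\epsilon)$, so one of them contains a right-neighbourhood $(0,\epsilon')$ of $0$; on that interval the support is a fixed $(\mathcal A,\mathcal B)$.

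Granting this fixed support, part $(ii)$ follows by letting $\lambda\to0+$ in \eqref{eqn0g}. The term $P_{B_0}(\x)$ does not depend on $\lambda$, and for $1\leqslant i\leqslant k$ the $i$th term is $-\|P_{B_i}(\x)\|$ times the unit vector in the direction of $P_{A_i}(\x^*(\lambda,\w_\tau))=P_{A_i\cap E}(\x^*)+\lambda P_{A_i}(\w_\tau)$. If $P_{A_i\cap E}(\x^*)\neq\mathbf0$ this direction tends to that of $W_i=P_{A_i\cap E}(\x^*)$; if $P_{A_i\cap E}(\x^*)=\mathbf0$, so that $A_i\subseteq F$, the factor $\lambda$ cancels and the direction is constantly that of $W_i=P_{A_i\cap F}(\w_\tau)$. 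In either case the $i$th term converges to $-\frac{\|P_{B_i}(\x)\|}{\|W_i\|}W_i$, so the limit \eqref{eqn3f} exists and equals \eqref{eqn3g}. Part $(i)$ is then the special case in which $\w$ is tangent to $\sigma$: every $A_i\subseteq E$ and $P_{A_i}(\x^*)\neq\mathbf0$, so $W_i=P_{A_i}(\x^*)$ and the limit is the value of \eqref{eqn0g} for the fixed support $(\mathcal A,\mathcal B)$ at $\x^*$. Even should this support carry one stratum more than that of the geodesic from $\x^*$ to $\x$, a stratum that collapses as $\lambda\to0+$, the remark following Proposition \ref{prop0d} shows that \eqref{eqn0g} retains that same value, so the limit equals $\Phi(\x;\x^*)$.

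The main obstacle is the stabilisation step of the second paragraph: a priori the carrier, and hence the support, of the geodesic from $\x^*(\lambda,\w)$ to $\x$ could change infinitely often as $\lambda\to0+$. What rules this out is the combination of the finiteness of the set of possible supports with the fact that each support is selected by polynomial (hence semialgebraic) conditions in $\lambda$. After that, the only delicate point is the bookkeeping of the limiting directions $W_i$, separating the axes of $E$, whose coordinates survive the limit, from those of $F$, whose $O(\lambda)$ coordinates rescale to the direction of $\w_\tau$.
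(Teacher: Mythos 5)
Your proof is correct, and its skeleton coincides with the paper's: first show that the support of the geodesic from $\x^*(\lambda,\w)$ to $\x$ is constant for all sufficiently small $\lambda>0$, then pass to the limit term by term in \eqref{eqn0g}, separating the blocks with $A_i\cap E\neq\emptyset$ (whose limiting direction is that of $P_{A_i\cap E}(\x^*)$) from those with $A_i\subseteq F$ (where the factor $\lambda$ cancels and the direction is constantly that of $P_{A_i\cap F}(\w_\tau)$); this second half is essentially identical to the paper's computation. Where you genuinely diverge is in how the stabilisation step is justified. The paper argues geometrically: as $\x^*(\lambda,\w_\tau)$ moves along the segment, the support can only change where that segment meets transversally the hypersurfaces across which carriers change, and such meetings occur at discrete points, so some terminal interval $(0,\epsilon]$ is free of them. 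You instead invoke the characterisation of supports in Proposition \ref{prop0a}: each of the finitely many candidate supports is selected by finitely many inequalities of types \eqref{eqn2g} and \eqref{eqn2f} which, after cross-multiplying and squaring, are polynomial in $\lambda$, so the set of $\lambda$ for which a given candidate is the support is a finite union of intervals; as these finitely many sets partition $(0,\epsilon)$, one of them contains a right-neighbourhood of $0$. This buys rigour: it does not rely on the unargued transversality/discreteness claim, and it works even if the moving segment happens to lie inside one of the hypersurfaces. Your treatment of part $(i)$ is likewise more explicit than the paper's one-line appeal to uniform continuity of geodesics: you note that the stabilised support may be strictly finer than the support of the geodesic from $\x^*$ itself (inequalities \eqref{eqn2g} degenerating to equalities at $\lambda=0$) and use the Pythagorean identity in the remark after Proposition \ref{prop0d} to conclude that the two expressions nevertheless give the same value, so the limit is indeed $\Phi(\x;\x^*)$.

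Two small imprecisions, neither fatal. First, several inequalities \eqref{eqn2g} may become equalities simultaneously at $\lambda=0$, so ``one stratum more'' should read ``one or more merges,'' the same identity being applied once per merged block; note also that \eqref{eqn2f} becoming an equality at $\lambda=0$ causes no discrepancy, since condition $(ii)$ of Proposition \ref{prop0a} is non-strict and the stabilised support then remains the support at $\lambda=0$. Second, your appeal to Proposition \ref{prop0a} tacitly assumes $k>0$; the degenerate case in which $\x$ and $\x^*(\lambda,\w)$ lie in the closure of a common orthant is trivial, since there $\Phi(\x;\x^*(\lambda,\w))=\jmath(\x)$ for all small $\lambda$.
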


\red{For $\x^*\in\sigma\subseteq\X^m$, $\Psi(\x,\w;\x^*)$ defined by \eqref{eqn3f} is the limit of the translated logarithm map of $\Phi(\x;\x')$ as $\x'\rightarrow\x^*$ from the direction $\w$. When the direction $\w$ is clear in the context we shall, in the following, call $\Psi(\x,\w;\x^*)$ simply the directional limit of $\Phi(\x;\x')$.}

\begin{proof}
($i$) This follows from the uniform continuity of geodesics with respect to their end points (cf. \cite{BH}, pp195-196) and also from a minor modification of the proof of ($ii$) below.

($ii$) \red{Note that, since $\w_\tau\in\mathbb{R}(E)\times\O(F)$, $\x^*$ and $\x^*(\lambda,\w_\tau)$ lie in different strata.} Writing $\gamma_\lambda$ for the geodesic from $\x^*(\lambda,\w_\tau)$ to $\x$, as $\x^*(\lambda,\w_\tau)$ moves along $\gamma$ the support of $\gamma_\lambda$ can only change when $\gamma$ meets transversally one or more of the hyper-surfaces where the carrier of the geodesic to $\x$ changes. This can only happen at discrete points along $\gamma$ so, for some $\epsilon>0$ and $0<\lambda\leqslant\epsilon$, the carriers of the geodesics $\gamma_\lambda$ will be independent of $\lambda$. Let $(\mathcal{A},\mathcal{B})$ be the support of $\gamma_\epsilon$ from $\x^*(\epsilon,\w_\tau)$ to $\x$, where $\mathcal{A}=(A_0,A_1,\cdots,A_k)$ and $\mathcal{B}=(B_0,B_1,\cdots,B_k)$. Then, \red{$A_0\cup A_1\cup\cdots\cup A_k=E\cup F$ and,} for $0<\lambda\leqslant\epsilon$, the integer $k$ and the support $(\mathcal{A},\mathcal{B})$ will remain constant for the expression
\begin{eqnarray*}
\Phi(\x;\x^*(\lambda,\w_\tau))&=&\jmath\left(\red{P_{B_0}(\x), -\frac{\|P_{B_1}(\x)\|}{\|P_{A_1}(\x^*(\lambda,\w_\tau))\|}P_{A_1}(\x^*(\lambda,\w_\tau)),\cdots,}\right.\\
&&\quad\qquad\left.\red{-\frac{\|P_{B_k}(\x)\|}{\|P_{A_k}(\x^*(\lambda,\w_\tau))\|}P_{A_k}(\x^*(\lambda,\w_\tau))}\right),
\end{eqnarray*}
replacing $\x^*$ in \eqref{eqn0g} by $\x^*(\lambda,\w_\tau)$. Then, since the $\x^*(\lambda,\w_\tau)$ lie in $\tau$ for all sufficiently small positive $\lambda$, the vectors $\Phi(\x;\x^*(\lambda,\w_\tau))$ all lie in $\mathbf{C}_\tau$ so that it makes sense to take the limit as $\lambda\rightarrow0+$, \red{where $\mathbf C_\tau$ is the common translated cone of the tangent cone at $\x^*(\lambda,\w_\tau)$ as introduced in Section 2}.

To evaluate it, we take the limit in the above expression for $\Phi(\x;\x^*(\lambda,\w_\tau))$. Since $\x^*\in{\mathcal O}(E)$, $\x^*(\lambda,\w_\tau) =\x^*+\lambda \w_\tau$ for sufficiently small $\lambda>0$ and it follows that \red{$P_{A_i}(\x^*(\lambda,\w_\tau)) = P_{A_i\cap E}(\x^*) + \lambda P_{A_i}(\w_\tau)$}. So the limit as $\lambda\rightarrow0+$ of this term is \red{$P_{A_i\cap E}(\x^*)$} if that is non-zero. If it is zero, then $A_i\cap E=\emptyset$ since \red{$\|P_{\{e\}}(\x^*)\|>0$} for all $e\in E$. Then \red{$P_{A_i}(\w_\tau)$, the projection of $\w_\tau$ on $\mathbb{R}(A_i)$ is, in fact, $P_{A_i\cap F}(\w_\tau)$.}
\end{proof}

If $\sigma$ has co-dimension $l$ and $\tau$ co-dimension $l'$ then, when $l-l'=1$ and so $|F|=1$, there is no $i>0$ such that $|A_i|>1$ and \red{$P_{A_i\cap E}(\x^*)=0$} as all the axes involved in the carrier that are not in $E\cup F$ are in $A_0=B_0$. If further $l=1$ and $l'=0$, that is, $\sigma$ is a stratum of \red{local} co-dimension one and $\tau$ co-bounding $\sigma$ is a \red{locally} top-dimensional stratum, then $\Psi(\,\cdot\,,\w_\tau;\x^*)$ obtained here is identical with the map resulting from the `folding map' composed with $\Phi(\,\cdot\,;\x^*)$ used in \cite{BLO2} when $\X^m$ is a tree space, noting that $\w_\tau$ in this case is unique up to a positive scalar multiple. 

\begin{example}\label{ex3}
\red{Consider the orthant space $\X^2$ in Example \ref{ex2}. Take $\sigma=\{o\}$ and $\tau=\O(u_1,u_2)$. Recall from Example \ref{ex2} that the tangent cone to $\X^2$ at $o$ is $\X^2$ itself. Take $\w_\tau=\x^*$, where $\x^*\in\tau$ is indicated in Figure \ref{fig1}. Then, $\Psi(\x,\w_\tau;o)=\Phi(\x;\x^*)$ for any $\x\in\X^2$. Since the light grey region in Figure \ref{fig1} may change if $\x^*$ changes, $\Phi(\,\cdot\,;\x^*)$ may change as a map when $\x^*$ changes. Hence, the directional limit $\Psi(\,\cdot\,,\w_\tau;o)$ of $\Phi(\,\cdot\,;\lambda\w_\tau)$ from the direction $\w_\tau$ as $\lambda\rightarrow0$, as a map, also depends on $\w_\tau$.}
\end{example}

For $\w_\tau$ as given in Theorem \ref{thm2}$(ii)$, \red{write} $\w_\tau^\perp$ for the component of $\w_\tau$ orthogonal to $\sigma$, that is, the component in $\{\textbf{0}\}\times\O(F)\subset\mathbb{R}(E)\times\O(F)$. \red{Then, the following consequences of Theorem \ref{thm2} imply that, although the directional limit $\Psi(\x,\w_\tau;\x^*)$ generally depends on $\w_\tau$, for given $\x$ and $\x^*$, as noted in Example \ref{ex3} above, it remains constant in some circumstances. In particular, to consider the changes of $\Psi(\x,\w_\tau;\x^*)$ as $\x$ varies, it suffices to restrict attention to $\w_\tau\in\S_{\tau\setminus\sigma}^{l-l'}$, recalling that $\S_{\tau\setminus\sigma}^{l-l'}$ is the open unit spherical segment of $\{{\bf0}\}\times\O(F)$ given by Definition \ref{def0c}.}

\begin{corollary}
With the notation and hypotheses of Theorem $\ref{thm2}(ii)$,  
\begin{enumerate}
\item[$(i)$] $\Psi(\,\cdot\,,\lambda\w_\tau;\x^*)=\Psi(\,\cdot\,,\w_\tau;\x^*)$ for all $\lambda>0$;
\item[$(ii)$] $\Psi(\x,\w_\tau;\x^*)=\Psi(\x,\w_\tau^\perp;\x^*)$.
\end{enumerate}
\label{cor0a}
\end{corollary}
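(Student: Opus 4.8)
The plan is to read off both claims directly from the explicit formula \eqref{eqn3g} for $\Psi$ established in Theorem \ref{thm2}$(ii)$, exploiting the scale-invariance of each ingredient. For part $(i)$, I would first observe that multiplying $\w_\tau$ by $\lambda>0$ does not change the geodesic direction at $\x^*$, only its parametrisation; more precisely, $\x^*(\mu,\lambda\w_\tau)$ traverses the same geodesic ray as $\x^*(\mu,\w_\tau)$, so for all sufficiently small $\mu$ the two families share a common carrier and hence a common support $(\mathcal A,\mathcal B)$. It then suffices to check that the vectors $W_i$ appearing in \eqref{eqn3g} are unchanged under $\w_\tau\mapsto\lambda\w_\tau$. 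This is immediate from their definition: when $P_{A_i\cap E}(\x^*)\neq\mathbf 0$ we have $W_i=P_{A_i\cap E}(\x^*)$, which does not involve $\w_\tau$ at all; and when $P_{A_i\cap E}(\x^*)=\mathbf 0$ we have $W_i=P_{A_i\cap F}(\w_\tau)$, so $W_i$ is replaced by $\lambda W_i$, and the coefficient $\|P_{B_i}(\x)\|/\|W_i\|$ is exactly compensated by the factor $\lambda$ inside $W_i$, leaving $\bigl(\|P_{B_i}(\x)\|/\|W_i\|\bigr)W_i$ invariant. Since every factor of \eqref{eqn3g} is thus unaffected, $\Psi(\,\cdot\,,\lambda\w_\tau;\x^*)=\Psi(\,\cdot\,,\w_\tau;\x^*)$.

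For part $(ii)$, I would split $\w_\tau=\w_\tau^{\parallel}+\w_\tau^{\perp}$ into its components in $\mathbb R(E)\times\{\mathbf 0\}$ and $\{\mathbf 0\}\times\O(F)$ respectively, and again argue that the two directions $\w_\tau$ and $\w_\tau^{\perp}$ yield the same support for small $\lambda$ and the same vectors $W_i$ in \eqref{eqn3g}. The key point is that each $W_i$ is built from either $P_{A_i\cap E}(\x^*)$ or $P_{A_i\cap F}(\w_\tau)$: the first does not depend on $\w_\tau$, while the second satisfies $P_{A_i\cap F}(\w_\tau)=P_{A_i\cap F}(\w_\tau^{\perp})$ because $\w_\tau^{\parallel}$ lies in $\mathbb R(E)$ and so has zero projection onto any $\mathbb R(F)$-axis. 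Hence replacing $\w_\tau$ by $\w_\tau^{\perp}$ leaves every $W_i$, and therefore the whole expression \eqref{eqn3g}, unchanged.

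The one step requiring genuine care, rather than mere substitution, is verifying that $\w_\tau$ and $\w_\tau^{\perp}$ determine the \emph{same} support $(\mathcal A,\mathcal B)$ for the geodesics $\gamma_\lambda$ from $\x^*(\lambda,\cdot)$ to $\x$ as $\lambda\to0+$, since a priori the two perturbation directions could send $\x^*$ across different hyper-surfaces of $\mathcal D_{\x^*}$ and so into regions with different carriers. I expect this to be the main obstacle. To handle it I would note that as $\lambda\to0+$ both $\x^*(\lambda,\w_\tau)$ and $\x^*(\lambda,\w_\tau^{\perp})$ approach the same limit $\x^*\in\sigma$ along rays into $\tau$, and that the support is governed by the inequalities \eqref{eqn2g} and \eqref{eqn2f}; in those inequalities the relevant projections of the moving reference point onto the $A_i$-axes are precisely the $P_{A_i}(\x^*(\lambda,\cdot))=P_{A_i\cap E}(\x^*)+\lambda P_{A_i}(\cdot)$ computed in the proof of Theorem \ref{thm2}, whose limiting behaviour depends on $\w_\tau$ only through $P_{A_i\cap F}(\w_\tau)=P_{A_i\cap F}(\w_\tau^{\perp})$. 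Thus the defining inequalities coincide in the limit for the two directions, forcing a common $\epsilon$ and a common support, and both parts $(i)$ and $(ii)$ then follow by the direct substitution described above.
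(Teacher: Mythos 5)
Your strategy --- reading both identities off the explicit formula \eqref{eqn3g} --- is exactly the paper's own proof of this corollary, which disposes of $(i)$ as "obvious from \eqref{eqn3g}" and of $(ii)$ by noting that only the $F$-coordinates of $\w_\tau$ can enter \eqref{eqn3g}. Your part $(i)$ is correct and complete: the points $\x^*(\mu,\lambda\w_\tau)$ sweep out the same ray as $\x^*(\mu,\w_\tau)$, so the supports literally agree, and the factor $\lambda$ cancels inside $\bigl(\|P_{B_i}(\x)\|/\|W_i\|\bigr)W_i$.

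The gap is in the step you yourself single out as the main obstacle in $(ii)$. Your inference is: the limiting behaviour of $P_{A_i}(\x^*(\lambda,\cdot))$ depends on the direction only through $P_{A_i\cap F}(\cdot)$, hence "the defining inequalities coincide in the limit, forcing a common $\epsilon$ and a common support." This is false in degenerate configurations. The support for small $\lambda>0$ is decided by \emph{strict} inequalities at positive $\lambda$; when a limiting inequality \eqref{eqn2g} is an \emph{equality}, say $\|P_{A_i\cap E}(\x^*)\|/\|P_{B_i}(\x)\|=\|P_{A_{i+1}\cap E}(\x^*)\|/\|P_{B_{i+1}}(\x)\|$ with $A_i,A_{i+1}\subseteq E$, the tie at positive $\lambda$ is broken by the first-order terms $2\lambda\langle P_{A_i\cap E}(\x^*),P_{A_i\cap E}(\w_\tau)\rangle$ in $\|P_{A_i}(\x^*+\lambda\w_\tau)\|^2$, i.e.\ by exactly the $E$-components in which $\w_\tau$ and $\w_\tau^\perp$ differ. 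Schematically (assuming all relevant orthants lie in $\X^m$): take $E=\{e_1,e_2\}$, $F=\{f\}$, $\x^*=(1,1,0)$ and $\x$ with $\|P_{B_1}(\x)\|=\|P_{B_2}(\x)\|$; then $\w_\tau=(1,2,b)$ produces, for all small $\lambda>0$, the support with separate blocks $A_1=\{e_1\}$, $A_2=\{e_2\}$, whereas $\w_\tau^\perp=(0,0,b)$ keeps the equality for every $\lambda$ and produces the merged block $A_1\cup A_2$. So there is no common support, and your term-by-term comparison of \eqref{eqn3g} is not available. The corollary survives because in precisely this tie situation the split and merged forms give the same vector: when the ratios are equal,
\[
\left(\frac{\|P_{B_{i}}(\x)\|}{\|P_{A_{i}}(\x^*)\|}P_{A_{i}}(\x^*),\ \frac{\|P_{B_{i+1}}(\x)\|}{\|P_{A_{i+1}}(\x^*)\|}P_{A_{i+1}}(\x^*)\right)
=\frac{\|P_{B_{i}\cup B_{i+1}}(\x)\|}{\|P_{A_{i}\cup A_{i+1}}(\x^*)\|}P_{A_{i}\cup A_{i+1}}(\x^*),
\]
which is the identity recorded in the paper immediately after Proposition \ref{prop0d}. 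Your argument needs this (or an equivalent continuity-of-$\Phi$ argument across the change of support) to close the degenerate case; as written, the claimed common support is the one assertion in your proof that can actually fail.
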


\begin{proof}
$(i)$ is obvious \red{from the expression \eqref{eqn3g}} and $(ii)$ is immediate since $\sigma=\O(E)$, $\tau=\O(E\cup F)$ and only the $F$-coordinates of $\w_\tau$ are potentially involved in \eqref{eqn3g}.
\end{proof}

When $\x^*$ lies in a stratum $\sigma$ of positive co-dimension that is not locally top-dimensional, the vector $\log_{\x^*}(\x)$, \red{and so $\Phi(\x;\x^*)$,}  will usually have non-zero components both tangent to $\sigma$ and orthogonal to it. In order to discuss \red{the projections, onto these components, of the translated logarithm map and of its directional limits, as well as to discuss their derivatives}, we \red{extend the notation $P$ for projection maps on $\X^m$ given by Definition \ref{def0g} to include projection maps on \red{ tangent cones, or their translated cones}. However, since we are more interested in the orthant itself rather than the axes determining it, we shall use $P_\sigma$ instead of $P_E$, where $\sigma=\O(E)$. In particular, for any stratum $\tau=\O(E\cup F)$ co-bounding $\sigma$ in $\X^m$, $P_\sigma$ and $P_{\tau\setminus\sigma}$ respectively are the projections onto the two factors of the corresponding stratum $\mathbb{R}(E)\times\O(F)$ in the common translated cone $\CC_\sigma$, or equivalently in the tangent cone at a point of $\sigma$, depending on the context.} 

For $\x^*$ in $\sigma=\O(E)$ or in $\tau=\O(E\cup F)$ co-bounding $\sigma$, we shall denote $P_\sigma(\log_{\x^*}(\x))$ by $\log^\sigma_{\x^*}(\x)$ and $P_\sigma(\Phi(\x;\x^*))$ by $\Phi_\sigma(\x;\x^*)$. Note that, on $\CC_\sigma$, $P_\sigma$ so defined is the tangential projection onto $\sigma$ and $P_{\tau\setminus\sigma}$ is one of several possible normal projections. In particular, for $\w_\tau\in\mathbb R(E)\times\O(F)$, $\w_\tau^\perp=P_{\tau\setminus\sigma}(\w_\tau)$. We shall \red{further} extend the notation $P_\sigma$ to include top-dimensional, or locally top-dimensional, strata by taking it to be the identity in that case, so that in particular $\Phi_\sigma(\x;\x^*)=\Phi(\x;\x^*)$ if $\sigma$ is a top-dimensional, or locally top-dimensional, stratum.

\vskip 6pt
For $\x^*$ in $\sigma$ of \red{locally} positive co-dimension, the non-zero components of $\log_{\x^*}(\x)$ orthogonal to $\sigma$ correspond to axes \red{with respect to which $\x^*$ has zero coefficient and $\x$ has non-zero coefficient}. Hence, these axes are in $A_0=B_0=E(\x^*,\x)$, \red{the set of axes common to all strata in the carrier of the geodesic between these two points,} so that they correspond to components of $v_0$ in \eqref{eqn0}. This implies, in particular, that $\Phi_\sigma(\x;\x^*)$ is given by \eqref{eqn0g} with \red{$P_{B_0}(\x)$ there replaced by $P_{B_0\cap E}(\x)$.} Then, since the restriction to each stratum of the set $\mathcal{D}_{\x}$ given by Definition \ref{def1g} is relatively closed, the \red{form of the} expression for $\Phi(\x;\x')$ will remain constant for $\x'$ varying in a neighbourhood of $\x^*$ in $\sigma$ when $\x^*$ is restricted to avoid $\mathcal{D}_{\x}$. Hence, the proof of Lemma 4 in \cite{BLO2} of the differentiability of $\Phi(\x;\x^*)$ with respect to $\x^*$ for the case that $\X^m$ is a tree space and $\x^*$ lies in a top-dimensional stratum will give the following generalisation of that result to the derivative of $\Phi_\sigma(\x;\x^*)$ \red{with respect to $\x^*$}. Since the proof is similar to that for Lemma 4 in \cite{BLO2}, we omit it here.

\begin{proposition}
Let $\x$ and $\x^*$ be fixed points in $\X^m$ with $\x^*$ in the stratum $\sigma=\mathcal{O}(E)$ and $\x\not\in\mathcal{D}_{\x^*}$, \red{where the set $\mathcal{D}_{\x^*}$ is given by Definition $\ref{def1g}$}. Then, the map
\[\sigma\rightarrow\mathbb{R}(E);\quad x'\mapsto\Phi_\sigma(\x;\x')\]
is differentiable with respect to $\x'$ at $\x^*$ with derivative given by
\begin{eqnarray}
\begin{array}{rcl}
&&M_{\x^*}^\sigma(\x)\\
&=\!\!\!&J^\top\hbox{\rm diag}\!\left\{{\mathbf0}_{|B_0\cap E|},\red{-\|P_{B_1}(\x)\|M^\dagger_{P_{A_1}(\x^*)},\cdots\!,-\|P_{B_k}(\x)\|M^\dagger_{P_{A_k}(\x^*)}}\!\right\}J
\end{array}
\label{eqn4a}
\end{eqnarray}
where the sequences $\mathcal{A}=(A_0,\cdots,A_k)$ and $\mathcal{B}=(B_0,\cdots,B_k)$ form the support of the geodesic from $\x^*$ to $\x$ and $J$ is the matrix representation of the \red{linear transformation $\jmath$ given by Definition $\ref{def0i}$}, and where, for $\y=(y_1,\cdots,y^{\phantom{A}}_l\!\!)\not=0$,
\begin{eqnarray}
M^\dagger_{\y}=\frac{1}{\|\y\|}I_l-\frac{1}{\|\y\|^3}\y^\top\y
\label{eqn10p}
\end{eqnarray}
is the derivative of the map $\y\mapsto\frac{1}{\|\y\|}\y$. 
\label{prop1}
\end{proposition}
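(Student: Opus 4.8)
The plan is to reduce the statement to differentiating a single explicit formula and then to carry out that differentiation by the chain rule. First I would fix, once and for all, the support $(\mathcal A,\mathcal B)$ of the geodesic from $\x^*$ to $\x$. The crucial preliminary observation is that this one support may be used to compute $\Phi(\x;\x')$, and hence $\Phi_\sigma(\x;\x')$, for every $\x'$ in a whole neighbourhood of $\x^*$ within $\sigma$. This is exactly what the hypothesis $\x\notin\mathcal D_{\x^*}$ buys us: by the symmetry $\x\in\mathcal D_{\x^*}\Leftrightarrow\x^*\in\mathcal D_{\x}$ and the relative closedness in $\sigma$ of $\mathcal D_{\x}$ noted before the statement, $\x^*$ admits a neighbourhood in $\sigma$ avoiding $\mathcal D_{\x}$, on which the form of the expression \eqref{eqn0g} does not change. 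Since $A_i\subseteq E$ for every $i\geqslant1$ and $A_0\cap E=B_0\cap E$, projecting \eqref{eqn0g} onto $\mathbb R(E)$ gives, throughout this neighbourhood,
\[
\Phi_\sigma(\x;\x')=\jmath\!\left(P_{B_0\cap E}(\x),\,-\frac{\|P_{B_1}(\x)\|}{\|P_{A_1}(\x')\|}P_{A_1}(\x'),\cdots,-\frac{\|P_{B_k}(\x)\|}{\|P_{A_k}(\x')\|}P_{A_k}(\x')\right),
\]
a single formula to which ordinary calculus applies.

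Next I would differentiate this expression term by term with respect to $\x'$ at $\x'=\x^*$, the factors $P_{B_0\cap E}(\x)$ and $\|P_{B_i}(\x)\|$ being constants in $\x'$. The first block contributes nothing, giving the zero matrix $\mathbf 0_{|B_0\cap E|}$. For $i\geqslant1$ the map $\x'\mapsto P_{A_i}(\x')$ is the linear selection of the $A_i$-coordinates, so only the normalisation $\y\mapsto\y/\|\y\|$ is nonlinear, and its derivative is $M^\dagger_{\y}$ as in \eqref{eqn10p}. Here I would emphasise the one place where smoothness genuinely uses the hypotheses: since $A_i\subseteq E=E(\x^*)$, the point $\x^*$ has strictly positive $A_i$-coordinates, so $P_{A_i}(\x^*)\neq0$ and the normalisation is differentiable at $P_{A_i}(\x^*)$. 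The chain rule then yields $-\|P_{B_i}(\x)\|\,M^\dagger_{P_{A_i}(\x^*)}$ composed with the coordinate selection, which is the $i$-th diagonal block. Collecting the blocks into a block-diagonal matrix and recording the coordinate reordering effected by $\jmath$, expressed through the matrix $J$ which permutes the $E$-coordinates into the block order $(A_0\cap E,A_1,\ldots,A_k)$ and back via $J^\top$, produces exactly \eqref{eqn4a}.

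The routine part is the differentiation; the substantive work is the first step, namely the guarantee that one fixed support, and hence one fixed formula, serves on a full neighbourhood of $\x^*$ in $\sigma$. This is where the preceding analysis of $\mathcal D_{\x^*}$ is indispensable: away from $\mathcal D_{\x}$ the support of the geodesic from $\x^*$ to $\x$ may still change as $\x'$ varies, but only across hyper-surfaces on which the relevant co-bounding orthant ($\O''$ or $\O'''$) lies in $\X^m$, so that by Propositions \ref{prop0d} and \ref{prop0e} the application of $\jmath$ leaves the form of \eqref{eqn0g} intact; the genuinely form-changing crossings are precisely those excluded by Definition \ref{def1g}. Once this is in hand the computation follows the proof of Lemma 4 in \cite{BLO2}, which is why the authors omit it.
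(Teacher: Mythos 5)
Your proposal is correct and takes essentially the same route as the paper: it first reduces $\Phi_\sigma(\x;\x')$ to the single projected formula (with $P_{B_0\cap E}(\x)$ in place of $P_{B_0}(\x)$, justified by $A_i\subseteq E$ for $i\geqslant1$), then uses the symmetry $\x\in\mathcal{D}_{\x^*}\Leftrightarrow\x^*\in\mathcal{D}_{\x}$, the relative closedness of $\mathcal{D}_{\x}$ in $\sigma$, and the form-preservation results of Propositions \ref{prop0d} and \ref{prop0e} to fix that formula on a neighbourhood of $\x^*$ in $\sigma$, and finally differentiates blockwise via $M^\dagger_{\y}$. The blockwise differentiation you carry out explicitly is exactly the computation the paper omits by referring to the proof of Lemma 4 of \cite{BLO2}.
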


\red{Note that, if $l>1$, $\|\y\|\,M^\dagger_{\y}$ is the projection onto the hyper-plane in $\mathbb{R}^l$ orthogonal to $\y$ and, when $l=1$, $M^\dagger_{y_1}=0$. Hence, if $k=0$ or if $k>0$ and $|A_i|=1$ for all $1\leqslant i\leqslant k$, then the derivative of $\Phi_\sigma(\x;\x')$, with respect to $\x'$, at $\x'=\x^*$ is zero. Recall that the corresponding translated logarithm map in the Euclidean space is the identity map, independent of $\x'$, and so its derivative with respect to $\x'$ is identically zero. Hence, in a broad sense, Proposition \ref{prop1} captures where and how the derivative of $\Phi_\sigma(\x;\x')$ differs from that of the corresponding translated Euclidean logarithm map.}

\vskip 6pt
Returning to \red{the directional limit} $\Psi(\x,\w_\tau;\x^*)$ \red{of $\Phi(\x;\x')$} with $\x^*\in\sigma=\O(E)$, where $\tau=\O(E\cup F)$ co-bounds $\sigma$ and $\w_\tau$ is in $\mathbb{R}(E)\times\O(F)$, since $\Psi(\x,\w_\tau;\x^*)$ is in $\CC_\tau$, both projections $\Psi_\tau(\x,\w_\tau;\x^*)=P_\tau(\Psi(\x,\w_\tau;\x^*))$ and $P_\sigma(\Psi(\x,\w_\tau;\x^*))$ are well defined. In particular, $\Psi_\tau(\,\cdot\,,\w_\tau;\x^*)$ is a map from $\X^m$ onto $\mathbb{R}(E\cup F)$. Then, we also have the following consequences of Theorem \ref{thm2}, \red{giving the relationships between the projections of the directional limit of the translated logarithm map and the directional limit of the projections of the translated logarithm map}. 

\begin{corollary}
With the notation and hypotheses of Theorem $\ref{thm2}(ii)$,  
\begin{enumerate}
\item[$(i)$] $\lim\limits_{\lambda\rightarrow0+}\Phi_\tau(\x;\x^*(\lambda,\w_\tau))=\lim\limits_{\lambda \rightarrow0+}\Phi_\tau(\x;\x^*(\lambda,\w_\tau^\perp))=\Psi_\tau(\x,\w_\tau;\x^*)$;
\item[$(ii)$] $P_\sigma\left(\Psi_\tau(\x,\w_\tau;\x^*)\right)=\Phi_\sigma(\x;\x^*)$.
\end{enumerate}
\label{cor00}
\end{corollary}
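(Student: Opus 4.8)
The plan rests on two elementary facts: $P_\tau$ and $P_\sigma$ are orthogonal coordinate projections, hence linear and continuous, and since $\mathbb R(E)\subseteq\mathbb R(E\cup F)$ they satisfy $P_\sigma\circ P_\tau=P_\sigma$. I would first record that, by Theorem \ref{thm2}$(ii)$, for all small $\lambda>0$ the support $(\mathcal A,\mathcal B)$ of the geodesic from $\x^*(\lambda,\w_\tau)$ to $\x$ has $A_0\cup\cdots\cup A_k=E\cup F$, so each block of \eqref{eqn0g} for $\Phi(\x;\x^*(\lambda,\w_\tau))$ lies in $\mathbb R(A_i)\subseteq\mathbb R(E\cup F)$; hence $\Phi(\x;\x^*(\lambda,\w_\tau))$ and its limit $\Psi(\x,\w_\tau;\x^*)$ lie in $\mathbb R(E\cup F)$, on which $P_\tau$ is the identity. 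For part $(i)$, $\w_\tau^\perp=P_{\tau\setminus\sigma}(\w_\tau)$ retains a non-zero $F$-component and so is tangent to $\tau$ at $\x^*$, whence Theorem \ref{thm2}$(ii)$ applies to it and Corollary \ref{cor0a}$(ii)$ gives $\Psi(\x,\w_\tau^\perp;\x^*)=\Psi(\x,\w_\tau;\x^*)$. Since $P_\tau$ commutes with the limit \eqref{eqn3f} by continuity, both $\lim_{\lambda\to0+}\Phi_\tau(\x;\x^*(\lambda,\w_\tau))$ and $\lim_{\lambda\to0+}\Phi_\tau(\x;\x^*(\lambda,\w_\tau^\perp))$ equal $P_\tau(\Psi(\x,\w_\tau;\x^*))=\Psi_\tau(\x,\w_\tau;\x^*)$, which is $(i)$.

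For part $(ii)$, $P_\sigma(\Psi_\tau)=P_\sigma(P_\tau(\Psi))=P_\sigma(\Psi)$, and continuity of $P_\sigma$ with Theorem \ref{thm2}$(ii)$ give $P_\sigma(\Psi)=\lim_{\lambda\to0+}\Phi_\sigma(\x;\x^*(\lambda,\w_\tau))$, so it suffices to identify $P_\sigma(\Psi)$ with $\Phi_\sigma(\x;\x^*)$. Applying $P_\sigma$ to \eqref{eqn3g} and using the description of $W_i$ in Theorem \ref{thm2}$(ii)$: if $A_i\cap E=\emptyset$ then $W_i=P_{A_i\cap F}(\w_\tau)\in\mathbb R(F)$ and the block is annihilated; if $A_i\cap E\neq\emptyset$ then $W_i=P_{A_i\cap E}(\x^*)\in\mathbb R(E)$ and the block is unchanged; the leading term becomes $P_{B_0\cap E}(\x)$. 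Thus $P_\sigma(\Psi)$ equals \eqref{eqn0g} with $P_{B_0}(\x)$ replaced by $P_{B_0\cap E}(\x)$, with every block $A_i\subseteq F$ deleted and each surviving $A_i$ replaced by $A_i\cap E$.

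By the remark preceding Proposition \ref{prop1}, $\Phi_\sigma(\x;\x^*)$ is exactly \eqref{eqn0g} with $P_{B_0}(\x)$ replaced by $P_{B_0\cap E}(\x)$, but computed from the support of the geodesic from $\x^*\in\sigma$. Matching the two therefore reduces to the \emph{support correspondence}, which I expect to be the main obstacle: the carrier of the geodesic from $\x^*$ should be obtained from that of the $\gamma_\lambda$ by deleting the $F$-axes, so that the blocks $A_i\subseteq F$ collapse, each remaining $A_i$ becomes $A_i\cap E$, and the sets $B_i$ and $B_0\cap E$ persist; because $\x^*\in\O(E)$, each surviving $W_i=P_{A_i\cap E}(\x^*)$ then coincides with the corresponding $P_{A_i^0}(\x^*)$, and the two expressions agree block by block. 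I would prove this correspondence from the uniform continuity of geodesics in their endpoints (\cite{BH}, pp.195--196) together with the monotonicity of coordinate functions along a geodesic (Proposition \ref{prop0c}): the $F$-coordinates of $\gamma_\lambda$, present only because of the $O(\lambda)$ excursion into $\tau$, are shed within an initial segment of length $O(\lambda)$ that vanishes in the limit, so the $E$-trajectory, and with it the tangential component of the initial velocity, converges to that of the geodesic from $\x^*$; equivalently, the component of $\log_{\x'}(\x)$ tangent to $\sigma$ is continuous as $\x'\to\x^*$ through $\overline\tau$, the jumps of the full logarithm map lying entirely in the normal $F$-directions killed by $P_\sigma$. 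The one delicate case is the non-generic one in which an $E$-axis and an $F$-axis are shed simultaneously, so that some $A_i$ straddles $E$ and $F$; there I would check that discarding the $F$-part of the block leaves its $\sigma$-contribution $-\|P_{B_i}(\x)\|\,P_{A_i\cap E}(\x^*)/\|P_{A_i\cap E}(\x^*)\|$ intact, which holds because $W_i$ is defined as $P_{A_i\cap E}(\x^*)$ whenever $A_i\cap E\neq\emptyset$.
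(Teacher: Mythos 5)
Your part $(i)$ is essentially the paper's own argument and is sound, apart from one inaccurate claim: $\Phi(\x;\x^*(\lambda,\w_\tau))$ need \emph{not} lie in $\mathbb{R}(E\cup F)$, since $B_0$ may contain axes of $\x$ that are compatible with $E\cup F$ without belonging to it (this is exactly why the paper's proof works with $P_{B_0\cap(E\cup F)}(\x)$ and allows the initial carrier stratum to be $\O(E\cup F\cup G)$ with $G\neq\emptyset$). The claim is harmless because you never actually need ``$P_\tau$ is the identity'': continuity of the linear projection $P_\tau$, so that it commutes with the limit, together with Corollary \ref{cor0a}$(ii)$, already yields $(i)$.

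The genuine gap is in part $(ii)$, at precisely the step you flag as the main obstacle. The ``support correspondence'' you assert --- delete the $F$-axes, blocks $A_i\subseteq F$ collapse, surviving blocks become $A_i\cap E$, the $B_i$ and $B_0\cap E$ persist --- is neither proved nor literally true. First, when a block with $A_i\subseteq F$ collapses, its partner $B_i$ does not persist as a block: those axes become common axes of $\x^*$ and $\x$ and are absorbed into the leading set $B_0$ of the limiting geodesic. For instance, in Example \ref{ex2} take $\sigma=\O(u_2)$, $\tau=\O(u_1,u_2)$ and $\x\in\O(u_3,u_4)$: for small $\lambda>0$ the support of $\gamma_\lambda$ has $(A_1,B_1)=(\{u_1\},\{u_3\})$, $(A_2,B_2)=(\{u_2\},\{u_4\})$, whereas for $\gamma_0$ the axis $u_3$ lies in $B_0$, not in a block of its own. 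Second, blocks can merge in the limit: the strict inequalities \eqref{eqn2g} for $\gamma_\lambda$ may degenerate to equalities at $\lambda=0$, so the support of $\gamma_0$ has fewer blocks than your dictionary predicts; the two expressions still agree, but only via the orthogonality identity recorded after Proposition \ref{prop0d}, which your proof would have to invoke and does not. Third, and most seriously, the justification you offer fails: the $F$-coordinates of $\gamma_\lambda$ are \emph{not} shed within an initial segment of parameter length $O(\lambda)$. An $F$-axis lying in a mixed block $A_i$ with $A_i\cap E\neq\emptyset$ keeps a positive (though $O(\lambda)$-small) coordinate until the crossing out of $\O_{i-1}$, and that crossing occurs at a parameter bounded away from zero, since the $E$-coordinates in the same block must decay from the fixed positive values they take at $\x^*$; note also that such mixed blocks are not ``non-generic'' in orthant spaces --- they occur on sets of positive measure whenever the intermediate orthants of Proposition \ref{prop0a}$(ii)$ are absent from $\X^m$. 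Your closing ``equivalently'' sentence merely restates the identity $P_\sigma(\Psi(\x,\w_\tau;\x^*))=\Phi_\sigma(\x;\x^*)$ that is to be proved. The paper's proof supplies the missing idea by avoiding the combinatorics of supports altogether: it fixes the initial carrier stratum $\zeta=\O(E\cup F\cup G)$ of the $\gamma_\lambda$, observes that for each $e\in E$ the coordinate function $s\mapsto P_{\{e\}}(\gamma_\lambda(s))$ is linear on an initial parameter interval whose length $\delta(\lambda)$ converges to a positive limit, and deduces from the uniform convergence of $\gamma_\lambda$ to $\gamma_0$ that the initial slopes converge, i.e.\ $P_{\{e\}}(\dot\gamma_\lambda(0))\rightarrow P_{\{e\}}(\dot\gamma_0(0))$ for every $e\in E$. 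With that soft analytic argument no block-by-block matching is required; without it, or a correct replacement handling the three phenomena above, your proof of $(ii)$ does not go through.
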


\begin{proof}
The equality of the extreme terms in $(i)$ follows since the $W_i$ in \eqref{eqn3g} are determined by the axes in $E\cup F$, so that it does not matter whether we project on $\O(E\cup F)$ before or after taking the limit, and the remaining term \red{$P_{B_0\cap(E\cup F)}(\x)$}        remains constant throughout the limiting process. The equality with the central term in $(i)$ follows from Corollary \ref{cor0a}$(ii)$: $\Psi_\tau(\x,\w_\tau;\x^*)=\Psi_\tau(\x,\w_\tau^\perp;\x^*)$, which is $\lim\limits_{\lambda\rightarrow0+}\Phi_\tau(\x;\x^*(\lambda,\w_\tau^\perp))$ by the case already established. 

Note that, since projection onto $\mathbb{R}(E)\subset\mathbb{R}(E\cup F)$ is unaffected by first projecting onto $\mathbb{R}(E\cup F)$, $(ii)$ is equivalent to 
\begin{eqnarray}
P_\sigma(\Psi(\x,\w_\tau;\x^*))=\Phi_\sigma(\x;\x^*).
\label{eqn4g}
\end{eqnarray}
To establish \eqref{eqn4g}, we need to allow for the fact that the geodesics $\gamma_\lambda$ \red{from $\x^*(\lambda,\w_\tau)$ to $\x$} and the geodesic $\gamma_0$ from $\x^*$ to $\x$ may have different carriers. We assume that $\lambda$ is restricted to the range $0<\lambda<\epsilon$ such that the initial segments of $\gamma_\lambda$ all lie in $\zeta=\mathcal{O}(E\cup F\cup G)$, where possibly $G=\emptyset$ and so $\zeta=\tau$, and let $K$ be the set of axes \red{with respect to which the initial segment of $\gamma_0$ has positive coordinates}. Then, $K\supseteq E\cup G$. Now, $e\in E\cup F\cup G$ if, and only if, for each $\lambda$ and some maximal $\delta(\lambda)>0$, \red{$\|P_{\{e\}}(\gamma_\lambda(s))\|>0$} for $s\in(0,\delta(\lambda))$. From the uniform continuity of geodesics with respect to their endpoints, 
it is clear that we must have $\delta(\lambda)\rightarrow\delta_0\geqslant0$ as $\lambda\rightarrow0$. If $\delta_0>0$, then \red{$\|P_{\{e\}}(\gamma_0(s))\|>0$} for $s\in(0,\delta_0)$ and so $e\in K$. Conversely, $e\in K\cap(E\cup F\cup G)$ implies that \red{$\|P_{\{e\}}(\gamma_0(s))\|>0$} for $s\in(0,\delta(0))$ and we must have $\delta_0=\delta(0)$. 

Thus, for any axis $e$ in $K\cap(E\cup F\cup G)$, the projections \red{$P_{\{e\}}(\gamma_\lambda(s))$ and $P_{\{e\}}(\gamma_0(s))$} of the initial segments of these geodesics all lie in the closure of the stratum $\mathcal{O}(E\cup F\cup G)$. The uniform continuity of these geodesics, and so of their projections, with respect to their endpoints, together with their linearity within that closed stratum, implies that the components \red{$P_{\{e\}}\left(\dot\gamma_\lambda(0)\right)$ converge to $P_{\{e\}}\left(\dot\gamma_0(0)\right)$} as $\lambda\rightarrow0$. In particular, since $E\subseteq K$, this is valid for any axis $e$ in $E$, which establishes \eqref{eqn4g}.
\end{proof}

The comments made prior to Proposition \ref{prop1} regarding the \red{form of the} expression for $\Phi_\sigma(\x;\x^*)$ can be generalised to apply to $\Psi_\tau(\x,\w_\tau;\x^*)$: using the notation in Theorem \ref{thm2}$(ii)$ for $\Psi(\x,\w_\tau;\x^*)$ we have that
\begin{eqnarray}
\Psi_\tau(\x,\w_\tau;\x^*)=\jmath\left(\red{P_{B_0\cap(E\cup F)}(\x),-\frac{\|P_{B_1}(\x)\|}{\|W_1\|}W_1,\cdots,-\frac{\|P_{B_k}(\x)\|}{\|W_k\|}W_k}\right).
\label{eqn4b}
\end{eqnarray}

\vskip 6pt
Recall that $\S^{l-l'}_{\tau\setminus\sigma}$ denotes the set of unit vectors in $\{{\bf0}\}\times\O(F)\subset\mathbb{R}(E)\times\O(F)$ that comprises all unit vectors that are tangent to $\tau$ and orthogonal to $\sigma$. If $l-l'=1$, $\S^{l-l'}_{\tau\setminus\sigma}$ comprises a single point. When $l-l'>1$, for any fixed $\x\in\X^m$, the pseudo-partition of $\X^m$ determined by $\mathcal{D}_{\x}$ induces a polyhedral subdivision of $\S^{l-l'}_{\tau\setminus\sigma}$ where, in each cell of the induced polyhedral subdivision, the \red{form of the} expression \eqref{eqn3g} for $\Psi(\x,\,\cdot\,;\x^*)$, and so the \red{form of the} expression for $\Psi_\tau(\x,\,\cdot\,;\x^*)$, remains the same. In particular, this implies that, for fixed $\x$, $\Psi_\tau(\x,\w_\tau;\x^*)$ is a continuous function of $\w_\tau\in\S^{l-l'}_{\tau\setminus\sigma}$. In fact, the directional derivatives of $\Psi_\tau(\x,\w_\tau;\x^*)$ with respect to $\w_\tau$ also exist in directions $\v$ in the tangent space to $\S^{l-l'}_{\tau\setminus\sigma}$ at $\w_\tau$ that we denote by $\mathcal{T}_{\w_\tau}(\S^{l-l'}_{\tau\setminus\sigma})$. These derivatives have the property given in the following proposition, where we note that $\mathbb{R}(E)\times\O(F)\subset\mathbb{R}(E)\times\mathbb{R}(F)$ so that, for fixed $\x$ and $\x^*$, $\w_\tau$ and $\Psi_\tau(\x,\w_\tau;\x^*)$ lie in the same Euclidean space.  

\begin{proposition}
Let the stratum $\sigma=\mathcal{O}(E)$ of co-dimension $l(\geqslant2)$ bound, in $\X^m$, the stratum $\tau=\O(E\cup F)$ of co-dimension $l'(<l-1)$. Fix $\x,\x^*\in\X^m$ with $\x^*\in\sigma$. Then, as a function of $\w_\tau\in\S^{l-l'}_{\tau\setminus\sigma}$, the directional derivative $D$ of $\Psi_\tau(\x,\w_\tau;\x^*)$ at $\w_\tau$ in the direction $\v\in\mathcal{T}_{\w_\tau}(\S^{l-l'}_{\tau\setminus\sigma})$ exists and satisfies
\[\langle\w_\tau,\,D\Psi_\tau(\x,\w_\tau;\x^*)(\v)\rangle=0.\]
\label{prop2}
\end{proposition}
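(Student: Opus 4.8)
The plan is to read the $\w_\tau$-dependence of $\Psi_\tau$ straight off the closed form \eqref{eqn4b} and then to exploit the single algebraic fact that the derivative $M^\dagger_{\y}$ of the normalisation map $\y\mapsto\y/\|\y\|$ has image orthogonal to $\y$; indeed the remark following Proposition \ref{prop1} records that $\|\y\|\,M^\dagger_{\y}$ is the orthogonal projection onto $\y^{\perp}$, so that $\langle\y,M^\dagger_{\y}(\cdot)\rangle=0$. First I would fix the support $(\mathcal A,\mathcal B)$ and recall, from the discussion preceding the proposition, that as $\w_\tau$ ranges over a single cell of the subdivision of $\S^{l-l'}_{\tau\setminus\sigma}$ induced by $\mathcal{D}_{\x}$ the form of \eqref{eqn4b} is constant; hence $P_{B_0\cap(E\cup F)}(\x)$, the norms $\|P_{B_i}(\x)\|$ and the index data are all frozen, and it suffices to differentiate the explicit expression.

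The next step, which is really the only one needing care, is to isolate the blocks of \eqref{eqn4b} that genuinely vary with $\w_\tau$. By the definition of $W_i$ in Theorem \ref{thm2}$(ii)$, the $i$-th block depends on $\w_\tau$ precisely when $P_{A_i\cap E}(\x^*)=\mathbf{0}$, that is when $A_i\cap E=\emptyset$ and so $A_i\subseteq F$; write $I_F$ for this set of indices, and for $i\in I_F$ set $W_i=P_{A_i}(\w_\tau)$, which is non-zero since $\w_\tau\in\{\mathbf{0}\}\times\O(F)$ has all its $F$-coordinates positive. For every remaining index the block is either the constant $P_{B_0\cap(E\cup F)}(\x)$ (for $i=0$) or the vector $-\|P_{B_i}(\x)\|\,W_i/\|W_i\|$ with $W_i=P_{A_i\cap E}(\x^*)\in\mathbb R(E)$ fixed; in either case it is independent of $\w_\tau$ and contributes nothing to the derivative.

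I would then differentiate block by block. Since $\jmath$ is the coordinate permutation of Definition \ref{def0i} and hence a Euclidean isometry, it is cleanest to apply $\jmath^{-1}$ and work in the $\mathcal A$-ordered coordinates, where $\w_\tau$ has $i$-th block $P_{A_i}(\w_\tau)$ and the $i$-th block of $\Psi_\tau$ is a function of $W_i$ alone. For $i\in I_F$, the chain rule gives the $i$-th derivative block $-\|P_{B_i}(\x)\|\,M^\dagger_{W_i}\bigl(P_{A_i}(\v)\bigr)$, with $M^\dagger$ as in \eqref{eqn10p}, and this vector lies in $\mathbb R(A_i)$; existence of the (one-sided) directional derivative is clear because on the relevant side of any cell wall the expression is a smooth function of $\w_\tau$ away from $W_i=\mathbf{0}$. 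Taking the Euclidean inner product with $\w_\tau$, only the $A_i$-component $P_{A_i}(\w_\tau)=W_i$ of $\w_\tau$ meets the $i$-th block, whence $\langle W_i,\,M^\dagger_{W_i}(P_{A_i}(\v))\rangle=0$ by the projection identity above. Summing over $i\in I_F$, using that the remaining blocks have zero derivative and that $\jmath$ preserves the inner product, yields $\langle\w_\tau,\,D\Psi_\tau(\x,\w_\tau;\x^*)(\v)\rangle=0$.

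The main obstacle, as indicated, is purely the bookkeeping of the second step: correctly pinning the $\w_\tau$-dependence to the blocks $A_i\subseteq F$ and verifying that the mixed blocks $A_i\cap E\neq\emptyset$ drop out because $\x^*$ forces their direction into $\mathbb R(E)$. Once that is settled the orthogonality is an immediate consequence of the identity $\langle\y,M^\dagger_{\y}(\cdot)\rangle=0$ applied in each block. It is worth noting that the hypothesis $\v\in\mathcal T_{\w_\tau}(\S^{l-l'}_{\tau\setminus\sigma})$, i.e. $\langle\v,\w_\tau\rangle=0$, is not actually needed for this conclusion; taking $\v=\w_\tau$ gives $M^\dagger_{W_i}(W_i)=\mathbf{0}$, which recovers the scale-invariance of Corollary \ref{cor0a}$(i)$ and serves as a consistency check.
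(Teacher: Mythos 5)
Your proposal is correct and takes essentially the same route as the paper's own proof: both fix the form of the explicit expression \eqref{eqn4b} on a cell of the subdivision induced by $\mathcal{D}_{\x}$ (using the one-sided expression when $\w_\tau$ lies on a cell wall), identify the $\w_\tau$-dependent blocks as exactly those with $A_i\cap E=\emptyset$, differentiate block by block via $M^\dagger$, and conclude from the fact that the image of $M^\dagger_{\y}$ is orthogonal to $\y$. The only cosmetic differences are that the paper parametrises the derivative along the spherical geodesic $\alpha(s)=\w_\tau\cos s+\v\sin s$ and packages it as $\v M_{\x^*,\x}(\w_\tau)$ with the matrix \eqref{eqn0p}, listing only the blocks with $|A_i\cap F|>1$ because $M^\dagger$ vanishes on one-dimensional blocks — a restriction your block-by-block computation reproduces automatically.
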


\begin{proof}
Without loss of generality, we may assume that $\|\v\|=1$. Consider the geodesic on $\S_{\tau\setminus\sigma}^{l-l'}$ given by $\alpha(s)=\w_\tau\cos s+\v\sin s$. Write $w_1$ for a vector whose coordinates comprise a subset of those of $\w_\tau$, and $v_1$, $\alpha_1$ for the corresponding components of $\v$ and $\alpha$ respectively. Then, the initial tangent vector of the function $f(s)=\frac{\alpha_1(s)}{\|\alpha_1(s)\|}$ is $\dot f(0)=v_1M^\dagger_{w_1}$, where $M^\dagger_{\y}$ is given by \eqref{eqn10p}. Clearly, $\langle w_1,\dot f(0)\rangle=0$, since the image of $M^\dagger_{w_1}$ is orthogonal to $w_1$. 

On the other hand, it follows from the argument in the proof of Theorem \ref{thm2} that, for all sufficiently small $s$, \red{the expression for} $\Psi_\tau(\x,\alpha(s);\x^*)$ all have the same \red{form} provided that, when $\w_\tau$ lies on the boundary of a cell of the induced polyhedral subdivision on $\S^{l-l'}_{\tau\setminus\sigma}$, we use for $\w_\tau$ the expression valid for $s>0$. Thus, we may use the expression for $\Psi_\tau(\x,\w_\tau;\x^*)$ given by \eqref{eqn4b} to express $D\Psi_\tau(\x,\w_\tau;\x^*)(\v)$ in the form $\v M_{\x^*,\x}(\w_\tau)$, where 
\begin{eqnarray}
M_{\x^*,\x}(\w_\tau)=J^\top\hbox{\rm diag}\left\{\textbf{0},\red{-\|P_{B_{l_1}}(\x)\|M^\dagger_{W_{l_1}},\cdots, -\|P_{B_{l_j}}(\x)\|}M^\dagger_{W_{l_j}},\textbf{0}\right\}J
\label{eqn0p}
\end{eqnarray}
and where, using the notation of Theorem \ref{thm2}, \red{$W_{l_i}=P_{A_{l_i}\cap F}(\w_\tau)$} are just those components in the expression for $\Psi_\tau(\x,\w_\tau;\x^*)$ for which \red{$P_{A_{l_i}\cap E}(\x^*)=0$  and $|A_{l_i}\cap F|>1$}. \red{Since $\|y\|M^\dagger_y$ is the projection onto the hyperplane orthogonal to $y$ in the Euclidean space where $y$ lies as noted after the statement of Proposition \ref{prop1},} the result follows.
\end{proof}

The proof of Proposition \ref{prop2} also shows that, if $\w_\tau$ lies in the interior of a single cell of the induced polyhedral subdivision of $\S^{l-l'}_{\tau\setminus\sigma}$, then $\Psi_\tau(\x,\w'_\tau;\x^*)$ is differentiable with respect to $\w'_\tau$ at $\w_\tau$. However, if $\w_\tau$ lies in the boundary of a cell of the induced polyhedral subdivision, \red{this no longer holds, although directional derivatives still exist}.  

\vskip 6pt
The directional derivative of $\langle\w_\tau,\,\Psi_\tau(\x,\w_\tau;\x^*)\rangle$, as a function of $\w_\tau$, now follows from Proposition \ref{prop2}.

\begin{corollary}
Assume that all assumptions in Proposition $\ref{prop2}$ hold. Then, for any $\v\in\mathcal{T}_{\w_\tau}(\S^{l-l'}_{\tau\setminus\sigma})$, the derivative $D$ in the direction $\v$ of $\langle\w_\tau,\,\Psi_\tau(\x,\w_\tau;\x^*)\rangle$ at $\w_\tau$ is given by
\[D\left(\langle\w_\tau,\,\Psi_\tau(\x,\w_\tau;\x^*)\rangle\right)(\v)=\langle\v,\Psi_\tau(\x,\w_\tau;\x^*)\rangle.\]
\label{cor0}
\end{corollary}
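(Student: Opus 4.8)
The plan is to recognise $\langle\w_\tau,\,\Psi_\tau(\x,\w_\tau;\x^*)\rangle$ as a product of two functions of the single variable $\w_\tau$ and to differentiate it by the Leibniz rule. Since the expression is an inner product, its directional derivative at $\w_\tau$ in the direction $\v$ splits into two terms: one in which $\v$ differentiates the first factor $\w_\tau$, and one in which $\v$ differentiates the second factor $\Psi_\tau(\x,\w_\tau;\x^*)$.

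Concretely, following the proof of Proposition \ref{prop2}, I would reuse the geodesic $\alpha(s)=\w_\tau\cos s+\v\sin s$ on $\S^{l-l'}_{\tau\setminus\sigma}$, for which $\alpha(0)=\w_\tau$ and $\dot\alpha(0)=\v$, and consider the scalar function $s\mapsto\langle\alpha(s),\,\Psi_\tau(\x,\alpha(s);\x^*)\rangle$. Differentiating at $s=0$ via the product rule gives
\[D\left(\langle\w_\tau,\,\Psi_\tau(\x,\w_\tau;\x^*)\rangle\right)(\v)=\langle\v,\,\Psi_\tau(\x,\w_\tau;\x^*)\rangle+\langle\w_\tau,\,D\Psi_\tau(\x,\w_\tau;\x^*)(\v)\rangle.\]
The existence of the second summand here is exactly the content of Proposition \ref{prop2}, while the first summand requires nothing beyond the observation that the identity map $\w_\tau\mapsto\w_\tau$ has directional derivative $\dot\alpha(0)=\v$.

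The final step, which is the only place anything substantive is invoked, is to apply the orthogonality conclusion of Proposition \ref{prop2}, namely $\langle\w_\tau,\,D\Psi_\tau(\x,\w_\tau;\x^*)(\v)\rangle=0$. This annihilates the second summand and leaves precisely $\langle\v,\,\Psi_\tau(\x,\w_\tau;\x^*)\rangle$, as claimed. I do not expect any genuine obstacle: the statement is a direct corollary of Proposition \ref{prop2} combined with the Leibniz rule. The only mild care required is the same boundary convention used there, namely that if $\w_\tau$ lies on the boundary of a cell of the polyhedral subdivision of $\S^{l-l'}_{\tau\setminus\sigma}$ induced by $\mathcal{D}_{\x}$ then the expression for $\Psi_\tau$ valid for $s>0$ must be used; but this is already incorporated into the existence of the directional derivative established in Proposition \ref{prop2}.
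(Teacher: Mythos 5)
Your proposal is correct and follows essentially the same route as the paper's own proof: expand by the Leibniz rule along the geodesic $\alpha(s)=\w_\tau\cos s+\v\sin s$, kill the term $\langle\w_\tau,\,D\Psi_\tau(\x,\w_\tau;\x^*)(\v)\rangle$ by the orthogonality in Proposition \ref{prop2}, and identify $D\w_\tau(\v)=\dot\alpha(0)=\v$. Your remark about using the expression valid for $s>0$ at cell boundaries matches the convention already built into Proposition \ref{prop2}, so nothing further is needed.
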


\begin{proof}
The second term in the expansion
\begin{eqnarray*}
&&D\left(\langle\w_\tau,\,\Psi_\tau(\x,\w_\tau;\x^*)\rangle\right)(\v)\\
&=&\langle D\w_\tau(\v),\Psi_\tau(\x,\w_\tau;\x^*)\rangle+\langle\w_\tau,\,D\Psi_\tau(\x,\w_\tau;\x^*)(\v)\rangle
\end{eqnarray*}
vanishes by Proposition \ref{prop2}. The result then follows since the directional derivative $D\w_\tau(\v)$ is given by the derivative at $s=0$ of the geodesic $\alpha(s)=\w_\tau\cos s+\v\sin s$.
\end{proof}

\section{Characterisation of Fr\'echet means}

In the remainder of this paper, we use the knowledge obtained so far on the \red{translated} logarithm map to investigate Fr\'echet means of probability measures on $\X^m$. So, from now on we assume that $\mu$ is a probability measure on $\X^m$ and that its Fr\'echet function defined by \eqref{eqn1}, where $\M=\X^m$, is finite at one point. The latter ensures that the Fr\'echet function of $\mu$ is finite everywhere. 

\vskip 6pt
Since the squared distance on a CAT(0)-space is a convex function with respect to each of its variables, it follows that the Fr\'echet mean of $\mu$ is unique and that the condition for $\x^*$ to be the Fr\'echet mean of $\mu$, that is, the condition for $\x^*$ to satisfy
\begin{eqnarray*}
\int_{\X^m} d(\x^*,\x)^2\,d\mu(\x)<\int_{\X^m} d(\x',\x)^2\,d\mu(\x),\qquad\hbox{ for any }\x'\not=\x^*,
\end{eqnarray*}
is equivalent to this inequality holding in any neighbourhood of $\x^*$. Then, since the Fr\'echet function of $\mu$ is differentiable at $\x^*$ if $\x^*$ lies in a top-dimensional, or locally top-dimensional, stratum, the above condition for such $\x^*$ to be the Fr\'echet mean of $\mu$ is equivalent to the condition that
\begin{eqnarray}
\int_{\X^m}\log_{\x^*}(\x)\,d\mu(\x)=0,
\label{eqn00}
\end{eqnarray}
similar to the condition for Fr\'echet means in Riemannian manifolds of non-positive curvature.

When $\x^*$ lies in a stratum $\sigma$ of \red{locally} positive co-dimension, the squared distance $d(\x^*,\x)^2$ is no longer differentiable at $\x^*$ for any fixed $\x$. Nevertheless, it has directional derivatives along all possible directions and then the above condition becomes that, at $\x^*\in\sigma$, the Fr\'echet function of $\mu$ has non-negative directional derivatives along all possible directions. The fact that $\X^m$ is a CAT(0)-space also implies that the derivative at $\x^*$ in the direction $\w$ of the distance function $d_{\x}=d(\cdot,\x)$ can be expressed as 
\[(Dd_{\x}(\x^*))(\w)=-\frac{1}{d_{\x}(\x^*)}\ll \w,\log_{\x^*}(\x)\gg,\]
where $\ll\,\,,\,\,\gg$ is defined by \eqref{eqn0a} (cf. \cite{BK}, (2.5), p417). Thus, the criterion for a point $\x^*$ lying in a stratum $\sigma$ of \red{locally positive} co-dimension to be the Fr\'echet mean of $\mu$ is equivalent to the condition that
\begin{eqnarray}
\int_{\X^m}\ll\w,\log_{\x^*}(\x)\gg\,d\mu(\x)\leqslant0
\label{eqn1p}
\end{eqnarray}
for all tangent vectors $\w$ at $\x^*$. 

For any vector $\w$ at $\x^*$ which is tangent to $\sigma$, the fact that $-\w$ is also tangent to $\sigma$ at $\x^*$ implies that the inequality \eqref{eqn1p} must be an equality for all such $\w$. From this it follows that 
\begin{eqnarray}
\int_{\X^m}\log^\sigma_{\x^*}(\x)\,d\mu(\x)=0,
\label{eqn2p}
\end{eqnarray}
analogous to the condition \eqref{eqn00}. On the other hand, for any given stratum $\tau$ co-bounding $\sigma$ and any vector $\w$ at $\x^*$ tangent to $\tau$, it is possible to link the derivative, at $\x^*$, of the Fr\'echet function in the direction $\w$ with $\Psi_\tau(\,\cdot\,,\w;\x^*)$, \red{the projection of the directional limit of $\Phi(\,\cdot\,;\x^*)$}. To show this, we need the following limiting property of the directional derivatives on general CAT(0)-spaces.

\begin{lemma}
Let $X$ be a ${\rm CAT}(0)$-space, and let $x_0$ and $x$ be two distinct fixed points in $X$. For some $\epsilon>0$, assume that $\gamma:[0,\epsilon)\rightarrow X$ is a geodesic with $\gamma(0)=x$ and $\dot\gamma(0)=v_x$. Then, if $\{x_i\,:\,i\geqslant1\}$ is a sequence of points along $\gamma$ convergent to $x$, the derivative $D$ at $x$ in the direction $v_x$ of the distance function $d_{x_0}=d(x_0,\cdot)$ has the property that
\[Dd_{x_0}(v_x)=\lim_{i\rightarrow\infty}Dd_{x_0}(v_{x_i}),\]
where $v_{x_i}$ denotes the tangent vector at $x_i$ of the geodesic $\gamma$.
\label{lem2}
\end{lemma}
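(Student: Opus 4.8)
The plan is to exploit the first-variation formula for the distance function on a CAT(0)-space, which expresses the directional derivative $Dd_{x_0}$ at any point in terms of the Alexandrov angle between the geodesic direction and the direction pointing towards $x_0$. Recall from the formula quoted before \eqref{eqn1p} that, for a point $y$ and a direction $v_y$,
\[
Dd_{x_0}(v_y)=-\frac{1}{d_{x_0}(y)}\ll v_y,\log_y(x_0)\gg=-\|v_y\|\cos\angle_y\bigl(v_y,\log_y(x_0)\bigr),
\]
using the definition \eqref{eqn0a} of the inner product. Thus the claim reduces to showing that the quantity $\cos\angle_{x_i}\bigl(v_{x_i},\log_{x_i}(x_0)\bigr)$ converges to $\cos\angle_x\bigl(v_x,\log_x(x_0)\bigr)$ as $x_i\to x$ along $\gamma$, after accounting for the fact that $d_{x_0}(x_i)\to d_{x_0}(x)$ by continuity of the metric.

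First I would reduce to a single geodesic triangle. For each $i$, consider the geodesic triangle $\Delta(x_0,x,x_i)$ formed by $x_0$, $x$ and $x_i$, noting that $x$, $x_i$ both lie on $\gamma$, so the side from $x$ to $x_i$ is a sub-segment of $\gamma$ and its initial direction at $x$ is exactly $v_x$ (up to positive scaling). The angle of this triangle at $x$ is therefore $\angle_x\bigl(v_x,\log_x(x_0)\bigr)$, independent of $i$. The angle at $x_i$ is $\pi-\angle_{x_i}\bigl(v_{x_i},\log_{x_i}(x_0)\bigr)$, since $v_{x_i}$ continues along $\gamma$ in the direction of increasing parameter, i.e. away from $x$, whereas the side towards $x$ points backwards along $\gamma$. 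Hence it suffices to control the angle of this triangle at $x_i$ as $x_i\to x$.

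The key step is to pass to comparison triangles and use the convergence of the side lengths. Let $\bar\Delta(x_0,x,x_i)$ be the Euclidean comparison triangle, with side lengths $d(x_0,x)$, $d(x,x_i)$, $d(x_0,x_i)$. As $x_i\to x$ we have $d(x,x_i)\to 0$ and $d(x_0,x_i)\to d(x_0,x)$, so the comparison triangles degenerate to the segment $[\bar x_0,\bar x]$. By the CAT(0) inequality the Alexandrov angle at each vertex is bounded above by the corresponding comparison angle, and one can extract the limiting behaviour of the comparison angle at $\bar x_i$ from the Euclidean law of cosines applied to the degenerating triangle. The delicate point is that the CAT(0) angle comparison gives only an inequality, not equality, so I cannot directly read off the limit of the true angle from the comparison angles at a single scale. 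The main obstacle is precisely this one-sided nature of the angle bounds.

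To overcome it, I would use the structure specific to the present setting: because $\gamma$ is a geodesic, the two directions $v_{x_i}$ and the direction at $x_i$ towards $x$ are genuinely opposite along $\gamma$, so the first variation formula for $d_{x_0}$ holds as an equality at every point, and the map $y\mapsto\log_y(x_0)$ is well-behaved along $\gamma$. Concretely, I would invoke the uniform continuity of geodesics with respect to their endpoints (cf. \cite{BH}, pp.195--196), already used in the proof of Theorem \ref{thm2}, to conclude that the initial directions of the geodesics from $x_i$ to $x_0$ vary continuously and converge to the initial direction of the geodesic from $x$ to $x_0$; combined with the continuity of the direction of $\gamma$ itself, this yields convergence of the Alexandrov angles $\angle_{x_i}\bigl(v_{x_i},\log_{x_i}(x_0)\bigr)\to\angle_x\bigl(v_x,\log_x(x_0)\bigr)$. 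Substituting back into the first-variation formula, together with $d_{x_0}(x_i)\to d_{x_0}(x)$, gives $Dd_{x_0}(v_{x_i})\to Dd_{x_0}(v_x)$, which is the assertion. The verification that the first-variation formula applies uniformly along the sequence, and that the relevant logarithm directions converge in the CAT(0) angular metric, is the part requiring care.
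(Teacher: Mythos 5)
Your reduction via the first-variation formula to convergence of the angles $\angle_{x_i}(x',x_0)\rightarrow\angle_x(x',x_0)$ is the same starting point as the paper's proof, but the step you lean on to finish is precisely the gap. You claim that uniform continuity of geodesics with respect to their endpoints implies that the initial directions of the geodesics from $x_i$ to $x_0$ converge, in the angular sense, to the initial direction of the geodesic from $x$ to $x_0$. That implication is false in a general CAT(0)-space: uniform convergence of geodesic segments yields only \emph{upper} semicontinuity of Alexandrov angles, $\angle_a(b,c)\geqslant\limsup_i\angle_{a_i}(b_i,c_i)$ (this is the result the paper cites from \cite{BBI}, Theorem 4.3.11), and lower semicontinuity genuinely fails. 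For instance, in a tripod (three segments glued at a point $o$, a CAT(0)-space), with $b,c$ on two different legs and $a_i\rightarrow o$ along the third leg, the directions from $a_i$ to $b$ and to $c$ coincide, so $\angle_{a_i}(b,c)=0$ for every $i$, while $\angle_o(b,c)=\pi$. So "directions converge because geodesics converge" cannot be the engine of the proof; it is essentially a restatement of what must be shown. A related misstep is your assertion that the angle of the triangle $\Delta(x_0,x,x_i)$ at $x_i$ \emph{equals} $\pi-\angle_{x_i}(v_{x_i},\log_{x_i}(x_0))$: in a CAT(0)-space adjacent angles along a geodesic satisfy only $\angle_{x_i}(x,x_0)+\angle_{x_i}(x',x_0)\geqslant\pi$, and the sum can exceed $\pi$ (at the centre of a tripod both angles equal $\pi$).

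What rescues the statement — and this is the content of the paper's argument — is that the missing lower bound can be obtained without any continuity of directions, by exploiting that $x_i$ lies on the geodesic segment from $x$ to $x'$. The paper combines two facts: the comparison-angle estimate $\limsup_i\overline\angle_{x_i}(x,x_0)\leqslant\pi-\angle_x(x',x_0)$ (\cite{BBI}, (4.3), p.124) together with $\overline\angle_{x_i}(x,x_0)\geqslant\angle_{x_i}(x,x_0)$, and the adjacent-angle inequality $\angle_{x_i}(x',x_0)+\angle_{x_i}(x_0,x)\geqslant\pi$ valid in spaces of non-positive curvature. Chaining these gives $\angle_x(x',x_0)\leqslant\liminf_i\angle_{x_i}(x',x_0)$, which together with upper semicontinuity yields the equality of limits. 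So your outline identifies the right quantity to control and correctly flags the one-sidedness of angle comparison as the obstacle, but the mechanism you propose to overcome it does not exist; you need the comparison-angle/adjacent-angle squeeze (or an equivalent two-sided argument) in its place.
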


\begin{proof}
\red{For $x,y,z\in X$, denote by $\angle_x(y,z)$ the Alexandrov angle at $x$ between the geodesics from $x$ to $y$ and $z$ respectively.} Since $Dd_{x_0}(v_x)=-\ll v_x,\log_x(x_0)\gg/d_{x_0}(x)=-\red{\|v_x\|}\cos\angle_x(x',x_0)$ where $x'$ is a point on the geodesic $\gamma$, it is sufficient to show that, for a fixed point $x'$ chosen on $\gamma$, $\angle_x(x',x_0)=\lim\limits_{i\rightarrow\infty}\angle_{x_i}(x',x_0)$.
 
For this, we write $\gamma_{a,b}$ for the (unique) geodesic segment joining $a$ and $b$, for any two distinct points $a$ and $b$ in $X$. Then, given sequences of points $a_i\rightarrow a$, $b_i\rightarrow b$ and $c_i\rightarrow c$ in $X$, it follows from the Cartan-Hadamand theorem that the geodesic segments $\gamma_{a_i,b_i}$ and $\gamma_{a_i,c_i}$ converge uniformly, as maps, to $\gamma_{a,b}$ and $\gamma_{a,c}$ respectively. 
From this it follows that $\angle_a(b,c)\geqslant\limsup_{i\rightarrow\infty}\angle_{a_i}(b_i,c_i)$ (cf. \cite{BBI}, Theorem 4.3.11, p.119). Applying this to the sequence of geodesic triangles $\Delta(x'x_ix_0)$, we obtain 
\begin{eqnarray}
\angle_x(x',x_0)\geqslant\limsup_{i\rightarrow\infty}\angle_{x_i}(x',x_0).
\label{eqn3p}
\end{eqnarray} 

On the other hand, using (4.3) p.124 of \cite{BBI}, we have 
\[\limsup_{i\rightarrow\infty}\overline\angle_{x_i}(x,x_0)\leqslant\pi-\angle_x(x',x_0),\]
where, \red{as in Section 2}, $\overline\angle$ denotes the corresponding comparison angle in $\mathbb{R}^2$. Then, since $\overline\angle_{x_i}(x,x_0)\geqslant\angle_{x_i}(x,x_0)$, the above implies that
\begin{eqnarray*}
\angle_x(x',x_0)&\leqslant&\pi-\limsup_{i\rightarrow\infty}\overline\angle_{x_i}(x,x_0)\\
&\leqslant&\pi-\limsup_{i\rightarrow\infty}\angle_{x_i}(x,x_0)\\
&=&\liminf_{i\rightarrow\infty}\left\{\pi-\angle_{x_i}(x,x_0)\right\}.
\end{eqnarray*}
However, since $X$ has non-positive curvature, if $x_i$ lies between $x$ and $x'$ on the geodesic segment $\gamma_{x,x'}$, then $\angle_{x_i}(x',x_0)+\angle_{x_i}(x_0,x)\geqslant\pi$ (cf. \cite{BBI}, p117, line 5). Hence, 
\[\angle_x(x',x_0)\leqslant\liminf_{i\rightarrow\infty}\angle_{x_i}(x',x_0).\]
This, together with \eqref{eqn3p}, gives that
\[\angle_x(x',x_0)=\lim_{i\rightarrow\infty}\angle_{x_i}(x',x_0),\]
so that the required result follows.
\end{proof}

Recalling that $\Phi(\x;\x^*)=\log_{\x^*}(\x)+\x^*$, the criteria \eqref{eqn1p} and \eqref{eqn2p} for a point $\x^*$ to be the Fr\'echet mean of $\mu$ may now be recast, the former in terms of the standard Euclidean inner product and $\Psi_\tau(\x,\w;\x^*)$, \red{the projection of the directional limit of $\Phi(\x;\x^*)$}, when $\x^*$ lies in a stratum $\sigma$ of positive co-dimension and $\w$ is tangent to a co-bounding stratum $\tau$.

\begin{theorem}
Let $\sigma$ be a stratum in $\X^m$ of co-dimension $l(\geqslant0)$. The necessary and sufficient conditions for a given point $\x^*\in\sigma$ to be the Fr\'echet mean of $\mu$ are
\begin{enumerate}  
\item[$(i)$] for any stratum $\tau$ in $\X^m$ of co-dimension $l'$, $0\leqslant l'<l$, co-bounding $\sigma$ and any $\w_\tau\in\S_{\tau\setminus\sigma}^{l-l'}$,
\begin{eqnarray}
\left\langle\w_\tau,\int_{\X^m}\Psi_\tau(\x,\w_\tau;\x^*)\,d\mu(\x)\right\rangle\leqslant0,
\label{eqn4p}
\end{eqnarray}
\red{where $\S_{\tau\setminus\sigma}^{l-l'}$ is given by Definition $\ref{def0c}$;}
\item[$(ii)$] for all $l\geqslant0$,
\begin{eqnarray}
\x^*=\int_{\X^m}\Phi_\sigma(\x;\x^*)\,d\mu(\x).
\label{eqn5p}
\end{eqnarray}
\end{enumerate}
\label{thm3}
\end{theorem}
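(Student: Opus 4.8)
The plan is to begin from the criterion \eqref{eqn1p}, that $\int_{\X^m}\ll\w,\log_{\x^*}(\x)\gg\,d\mu(\x)\leqslant0$ for every tangent vector $\w$ at $\x^*$, and to rewrite the Alexandrov inner product $\ll\w,\log_{\x^*}(\x)\gg$ in purely Euclidean terms involving $\Phi_\sigma$ and $\Psi_\tau$. Every tangent vector lies in the closure of a single stratum $\mathbb R(E)\times\O(F)$ of the tangent cone, so I would split into two cases: $\w$ tangent to $\sigma$, that is $\w\in\mathbb R(E)$; and $\w$ with a non-zero component normal to $\sigma$, hence tangent to some co-bounding $\tau=\O(E\cup F)$, writing $\w=\w^{\mathrm{tan}}+\w^{\perp}$ with $\w^{\mathrm{tan}}\in\mathbb R(E)$ and $\w^{\perp}=P_{\tau\setminus\sigma}(\w)\neq\mathbf{0}$.

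For $\w$ tangent to $\sigma$ I would argue directly at $\x^*$: the geodesic from $\x^*$ to $\x$ begins in $\overline{\mathbb R(E)\times\O(G)}$ for some $G$, since any axis outside $E$ that becomes positive stays positive and so contributes only to $A_0=B_0$; hence $\log_{\x^*}(\x)$ and $\w$ both lie in the closure of the common stratum $\mathbb R(E)\times\O(G)$, and by the identity of Section 2, $\ll\w,\log_{\x^*}(\x)\gg=\langle\w,\log_{\x^*}(\x)\rangle=\langle\w,\log^\sigma_{\x^*}(\x)\rangle=\langle\w,\Phi_\sigma(\x;\x^*)-\x^*\rangle$. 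Integrating against $\mu$ and using that $\pm\w$ are both admissible forces $\int_{\X^m}\Phi_\sigma(\x;\x^*)\,d\mu(\x)=\x^*$, which is exactly $(ii)$; this gives the necessity of $(ii)$ and shows that, once $(ii)$ holds, \eqref{eqn1p} is automatic for all $\w\in\mathbb R(E)$, recovering \eqref{eqn2p}.

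The crux is the case $\w^{\perp}\neq\mathbf{0}$. Here I would approach $\x^*$ along the geodesic $\x^*(\lambda,\w)$ into $\tau$ and apply Lemma \ref{lem2} with $x_0=\x$, $x=\x^*$, yielding $Dd_{\x}(\x^*)(\w)=\lim_{\lambda\rightarrow0+}Dd_{\x}(\x^*(\lambda,\w))(\w)$ and hence $\ll\w,\log_{\x^*}(\x)\gg=d_{\x}(\x^*)\lim_{\lambda\rightarrow0+}\frac{1}{d_{\x}(\x^*(\lambda,\w))}\ll\w,\log_{\x^*(\lambda,\w)}(\x)\gg$. At each $\x'=\x^*(\lambda,\w)\in\tau$ the geodesic to $\x$ begins in $\overline{\mathbb R(E\cup F)\times\O(G')}$, so $\w$ and $\log_{\x'}(\x)$ again lie in the closure of a common stratum, the Alexandrov inner product collapses to the Euclidean one, and $\ll\w,\log_{\x'}(\x)\gg=\langle\w,\Phi_\tau(\x;\x')-\x'\rangle$. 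Letting $\lambda\rightarrow0+$ and invoking Corollary \ref{cor00}$(i)$ (so $\Phi_\tau\rightarrow\Psi_\tau$) gives the pointwise identity
\[\ll\w,\log_{\x^*}(\x)\gg=\langle\w,\Psi_\tau(\x,\w;\x^*)-\x^*\rangle.\]
Decomposing $\w=\w^{\mathrm{tan}}+\w^{\perp}$ and using Corollary \ref{cor00}$(ii)$ ($P_\sigma\Psi_\tau=\Phi_\sigma$) for the tangential piece, Corollary \ref{cor0a}$(ii)$ ($\Psi_\tau(\x,\w;\x^*)=\Psi_\tau(\x,\w^{\perp};\x^*)$) for the normal piece, and $\langle\w^{\perp},\x^*\rangle=0$, this reduces to $\langle\w^{\mathrm{tan}},\Phi_\sigma(\x;\x^*)-\x^*\rangle+\langle\w^{\perp},\Psi_\tau(\x,\w^{\perp};\x^*)\rangle$.

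Finally I would integrate this identity against $\mu$, justifying the interchange of limit and integral by dominated convergence, the Cauchy--Schwarz bound $|\langle\w,\log_{\x'}(\x)\rangle|\leqslant\|\w\|\,d_{\x}(\x')$ together with finiteness of the Fr\'echet function supplying an integrable majorant. Under $(ii)$ the tangential term $\langle\w^{\mathrm{tan}},\int_{\X^m}\Phi_\sigma(\x;\x^*)\,d\mu(\x)-\x^*\rangle$ vanishes, so \eqref{eqn1p} reduces to $\langle\w^{\perp},\int_{\X^m}\Psi_\tau(\x,\w^{\perp};\x^*)\,d\mu(\x)\rangle\leqslant0$; normalising $\w^{\perp}$ to unit length via Corollary \ref{cor0a}$(i)$ makes this precisely \eqref{eqn4p}, establishing both the necessity and the sufficiency of $(i)$, and conversely $(i)$ and $(ii)$ together return \eqref{eqn1p} for every tangent $\w$. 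The main obstacle is the pointwise identity itself: replacing the non-Euclidean Alexandrov inner product at $\x^*$ by a limit of genuinely Euclidean inner products at nearby points of the flatter stratum $\tau$, which relies on Lemma \ref{lem2}, on the stratum-closure collapse of $\ll\,,\,\gg$, and on the continuity content of Corollary \ref{cor00}, with the limit--integral interchange as the accompanying technical point.
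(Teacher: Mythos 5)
Your proposal is correct and follows essentially the same route as the paper's own proof: reduce $(ii)$ to \eqref{eqn2p}, then for directions not tangent to $\sigma$ use Lemma \ref{lem2} to pass to the nearby points $\x^*(\lambda,\w_\tau)\in\tau$ where the Alexandrov inner product collapses to the Euclidean one, take the limit via Corollary \ref{cor00}$(i)$, and split $\w_\tau=\w_\sigma+\w_\tau^\perp$ using Corollaries \ref{cor00}$(ii)$ and \ref{cor0a} so that $(ii)$ kills the tangential term. The only difference is cosmetic: you establish the pointwise identity first and make the dominated-convergence interchange explicit, whereas the paper works with the integrated inequalities and leaves that interchange implicit.
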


Note that case $(i)$ may only occur if $l>0$, but need not occur then. Note also that, if $\X^m$ is a tree space, the special case $l=0$ of this result is the same as that of Lemma 3 of \cite{BLO2}; and the special case $l=1$, so that $l'=0$, is equivalent to that given by Lemma 5 of \cite{BLO2}: on the one hand, $\mathcal{S}^{l-l'}_{\tau\setminus\sigma}$ contains a single unit vector and, on the other hand, as we noted earlier, $\Psi_\tau(\,\cdot\,,\w_\tau;\x^*)=\Psi(\,\cdot\,,\w_\tau;\x^*)$ is identical with the composition of the `folding map' with $\Phi(\,\cdot\,;\x^*)$ in \cite{BLO2}.  

\begin{proof}
Noting that ($ii$) is precisely \eqref{eqn2p}, it is sufficient to show that ($i$) is equivalent to \eqref{eqn1p} for any tangent vector $\w$ that is not tangent to $\sigma=\O(E)$. For this, we fix any stratum $\tau=\O(E\cup F)$, of co-dimension $l'$, co-bounding $\sigma=\O(E)$ and take $\w=\w_\tau\in\mathbb{R}(E)\times\O(F)$. Then, it follows from Lemma \ref{lem2} that \eqref{eqn1p} is equivalent to
\begin{eqnarray}
\lim_{\lambda\rightarrow0+}\int_{\X^m}\ll\w_\tau,\log_{\x^*(\lambda,\w_\tau)}(\x)\gg\,d\mu(\x)\leqslant0,
\label{eqn5a}
\end{eqnarray}
where $\x^*(\lambda,\w_\tau)\red{=\x^*+\lambda\w_\tau}$, as defined prior to Theorem \ref{thm2}. Since $\w_\tau$ is tangent to $\tau$ at $\x^*(\lambda,\w_\tau)$ for sufficiently small $\lambda>0$ and, for any given $\x$, $\log_{\x^*(\lambda,\w_\tau)}(\x)$ is tangent either to $\tau$ or to one of the strata that co-bound $\tau$, we have 
\[\ll\w_\tau,\log_{\x^*(\lambda,\w_\tau)}(\x)\gg=\langle\w_\tau,\log_{\x^*(\lambda,\w_\tau)}(\x)\rangle.\] 
However, 
\begin{eqnarray*}
\left\langle\w_\tau,\log_{\x^*(\lambda,\w_\tau)}(\x)\right\rangle&=&\left\langle\w_\tau,\Phi(\x;\x^*(\lambda,\w_\tau))-\x^*(\lambda,\w_\tau)\right\rangle\\
&=&\left\langle\w_\tau,\Phi_\tau(\x;\x^*(\lambda,\w_\tau))-\x^*(\lambda,\w_\tau)\right\rangle.
\end{eqnarray*}
Hence, by Corollary \ref{cor00}$(i)$ and then Corollary \ref{cor0a}$(ii)$, \eqref{eqn5a} is equivalent to
\[\int_{\X^m}\left\langle\w_\tau,\Psi_\tau(\x,\w_\tau^\perp;\x^*)\right\rangle\,d\mu(\x)\leqslant\langle\w_\tau,\x^*\rangle,\]
where $\w_\tau^\perp=P_{\tau\setminus\sigma}(\w_\tau)$. Decomposing $\w_\tau$ as $\w_\tau=\w_\sigma+\w_\tau^\perp$, where $\w_\sigma=P_\sigma(\w_\tau)$, leads to 
\begin{eqnarray*}
\left\langle\w_\tau,\Psi_\tau(\x,\w^\perp_\tau;\x^*)\right\rangle
&=&\left\langle\w_\sigma,\Psi_\tau(\x,\w^\perp_\tau;\x^*)\right\rangle+\left\langle\w^\perp_\tau,\Psi_\tau(\x,\w^\perp_\tau;\x^*)\right\rangle\\
&=&\left\langle\w_\sigma,\Phi_\sigma(\x;\x^*)\right\rangle+\left\langle\w^\perp_\tau,\Psi_\tau(\x,\w^\perp_\tau;\x^*)\right\rangle,
\end{eqnarray*}
where the second equality follows from Corollary \ref{cor00}$(ii)$. The required result now follows by noting $(ii)$, noting that $\langle\w_\tau,\x^*\rangle=\langle\w_\sigma,\x^*\rangle=0$ and noting that, by applying the projection $P_\tau$ to the result of Corollary \ref{cor0a}$(i)$, $\Psi_\tau(\x,\w^\perp_\tau;\x^*)=\Psi_\tau(\x,\w^\perp_\tau/\|\w^\perp_\tau\|;\x^*)$. 
\end{proof}

\red{From now on, we assume that $\bxi$ is a random variable defined on a probability space $(\bOmega,\mathcal{F},{\bf P})$ with values in $\X^m$ and that $\mu$ is the distribution (measure) of $\bxi$, i.e. $\mu(B)={\bf P}(\bxi^{-1}(B))$ for any Borel set $B$ in $\X^m$.} When the stratum containing the Fr\'echet mean $\x^*$ of the probability measure $\mu$ on $\X^m$ is of \red{locally} positive co-dimension, \eqref{eqn4p} being an equality has a significant influence on the nature of the distributions of the Euclidean random variables $\Psi_\tau(\bxi,\w_\tau;\x^*)$\red{, which will be seen in Propositions \ref{prop5a}, \ref{prop5b} and \ref{prop6}. We shall also see, in Proposition \ref{prop4}, its link with the long term behaviour of sample Fr\'echet means}. 

\begin{definition}
For the stratum $\sigma$ of co-dimension $l(\geqslant1)$, in which the Fr\'echet mean $\x^*$ of $\mu$ lies, and the stratum $\tau$, of co-dimension $l'$, co-bounding $\sigma$, the subset $\Theta_{\tau,\sigma}(\x^*;\mu)$ of $\S^{l-l'}_{\tau\setminus\sigma}$ is defined as 
\begin{eqnarray}
\Theta_{\tau,\sigma}(\x^*;\mu)=\left\{\w_\tau\in\S^{l-l'}_{\tau\setminus\sigma}\mid\hbox{the inequality \eqref{eqn4p} for }\w_\tau\hbox{ is an equality}\right\},
\label{eqn7e}
\end{eqnarray}
\red{where $\S_{\tau\setminus\sigma}^{l-l'}$ is given by Definition $\ref{def0c}$.}
\label{def2a}
\end{definition}

The convexity of the directional derivative $D(d_{\x}^2)(\w)$ in $\w$ (cf. \cite{BK}, pp416-417) ensures that $\Theta_{\tau,\sigma}(\x^*;\mu)$ is a convex subset of $\S^{l-l'}_{\tau\setminus\sigma}$ and that
\begin{eqnarray}
\Theta_\sigma(\x^*;\mu)=\bigcup_{\tau\supset\sigma}\Theta_{\tau,\sigma}(\x^*;\mu)
\label{eqn7f}
\end{eqnarray}
is a convex subset of $\bigcup\limits_{\tau\supset\sigma}\mathcal{S}_{\tau\setminus\sigma}^{l-l'}\subseteq\CC_\sigma$. If $l-l'=1$, $\S^{l-l'}_{\tau\setminus\sigma}$ consists of a single unit vector so that $\Theta_{\tau,\sigma}(\x^*;\mu)$ is either $\S^{l-l'}_{\tau\setminus\sigma}$ itself or an empty set. In general, if the closure of $\Theta_{\tau,\sigma}(\x^*;\mu)$ is contained in $\S^{l-l'}_{\tau\setminus\sigma}$, the fact that $\langle\w_\tau,\,\Psi_\tau(\x,\w_\tau;\x^*)\rangle$ is continuous in $\w_\tau\in\S^{l-l'}_{\tau\setminus\sigma}$ implies that $\Theta_{\tau,\sigma}(\x^*;\mu)$ itself must be closed. 

\vskip 6pt
The following result gives a relationship between the Fr\'echet mean $\x^*$ of $\mu$ and the Euclidean mean of $\Psi_\tau(\bxi,\w_\tau;\x^*)$. Here, and henceforth, by interior we intend the relative interior, that is, interior with respect to the subspace topology.

\begin{proposition}
Let the stratum $\sigma$ of co-dimension $l(\geqslant2)$ bound, in $\X^m$, the stratum $\tau$ of co-dimension $l'(<l-1)$. Assume that the Fr\'echet mean $\x^*$ of $\mu$ lies in $\sigma$ and that ${\rm int}(\Theta_{\tau,\sigma}(\x^*;\mu))\not=\emptyset$. Then, for any $\w_\tau\in\Theta_{\tau,\sigma}(\x^*;\mu)$, 
\begin{eqnarray}
\int_{\X^m}\left\{\Psi_\tau(\x,\w_\tau;\x^*)-\Phi_\sigma(\x;\x^*)\right\}d\mu(\x)=0.
\label{eqn8p}
\end{eqnarray}
\label{prop5a}
\end{proposition}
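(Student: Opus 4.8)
The plan is to reduce the vector identity \eqref{eqn8p} to the vanishing of a single normal component, and then to extract that vanishing from the equality condition defining $\Theta_{\tau,\sigma}(\x^*;\mu)$ by a first-order argument. Write $G(\w_\tau)=\int_{\X^m}\Psi_\tau(\x,\w_\tau;\x^*)\,d\mu(\x)\in\mathbb R(E\cup F)$ and split it as $G(\w_\tau)=P_\sigma(G(\w_\tau))+P_{\tau\setminus\sigma}(G(\w_\tau))$ into its $\mathbb R(E)$- and $\mathbb R(F)$-components. Applying $P_\sigma$ under the integral and using Corollary \ref{cor00}$(ii)$, namely $P_\sigma(\Psi_\tau(\x,\w_\tau;\x^*))=\Phi_\sigma(\x;\x^*)$, together with the Fr\'echet-mean identity \eqref{eqn5p} of Theorem \ref{thm3}$(ii)$, gives $P_\sigma(G(\w_\tau))=\int_{\X^m}\Phi_\sigma(\x;\x^*)\,d\mu(\x)=\x^*$. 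Since $\Phi_\sigma(\x;\x^*)\in\mathbb R(E)$ has no $\mathbb R(F)$-component, the integral in \eqref{eqn8p} equals $G(\w_\tau)-\x^*=\bigl(P_\sigma(G(\w_\tau))-\x^*\bigr)+P_{\tau\setminus\sigma}(G(\w_\tau))=P_{\tau\setminus\sigma}(G(\w_\tau))$. Thus it suffices to prove that the normal component $P_{\tau\setminus\sigma}(G(\w_\tau))$ vanishes for every $\w_\tau\in\Theta_{\tau,\sigma}(\x^*;\mu)$.

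First I would settle this at a point $\w_\tau$ in the relative interior of $\Theta_{\tau,\sigma}(\x^*;\mu)$. Consider the scalar function $h(\w_\tau)=\langle\w_\tau,G(\w_\tau)\rangle=\int_{\X^m}\langle\w_\tau,\Psi_\tau(\x,\w_\tau;\x^*)\rangle\,d\mu(\x)$. By \eqref{eqn4p} we have $h\leqslant0$ throughout $\S^{l-l'}_{\tau\setminus\sigma}$, while $h\equiv0$ on $\Theta_{\tau,\sigma}(\x^*;\mu)$ by Definition \ref{def2a}; hence $h$ is identically $0$ on a relative neighbourhood of any interior point, so all its directional derivatives there, in directions $\v\in\mathcal T_{\w_\tau}(\S^{l-l'}_{\tau\setminus\sigma})$, vanish. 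I would differentiate $h$ under the integral sign along the geodesic $\alpha(s)=\w_\tau\cos s+\v\sin s$, the pointwise derivatives being furnished by Corollary \ref{cor0} as $\langle\v,\Psi_\tau(\x,\w_\tau;\x^*)\rangle$; this is legitimate by dominated convergence, the difference quotients being dominated by a multiple of $\|\x^*\|+d(\x^*,\x)\in L^1(\mu)$ since the Fr\'echet function is finite. This yields $0=Dh(\w_\tau)(\v)=\langle\v,G(\w_\tau)\rangle$ for every $\v\in\mathcal T_{\w_\tau}(\S^{l-l'}_{\tau\setminus\sigma})$.

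From this I read off $P_{\tau\setminus\sigma}(G(\w_\tau))=0$ at interior points. Because $\w_\tau\in\{\mathbf0\}\times\O(F)\subset\mathbb R(F)$ and each $\v\in\mathcal T_{\w_\tau}(\S^{l-l'}_{\tau\setminus\sigma})=\{\v\in\mathbb R(F):\langle\v,\w_\tau\rangle=0\}$ also lies in $\mathbb R(F)$, the identity $\langle\v,G(\w_\tau)\rangle=\langle\v,P_{\tau\setminus\sigma}(G(\w_\tau))\rangle=0$ shows that $P_{\tau\setminus\sigma}(G(\w_\tau))$ is orthogonal to the whole tangent space, hence parallel to $\w_\tau$, say $P_{\tau\setminus\sigma}(G(\w_\tau))=c\,\w_\tau$. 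Here the hypothesis $l-l'\geqslant2$ guarantees that $\mathcal T_{\w_\tau}(\S^{l-l'}_{\tau\setminus\sigma})$ and $\mathbb R\w_\tau$ together span $\mathbb R(F)$, so this pins the component down to the radial line. Finally, using $P_\sigma(G(\w_\tau))=\x^*\perp\w_\tau$, we get $h(\w_\tau)=\langle\w_\tau,P_{\tau\setminus\sigma}(G(\w_\tau))\rangle=c$, and since $h(\w_\tau)=0$ we conclude $c=0$, i.e. $P_{\tau\setminus\sigma}(G(\w_\tau))=0$.

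To pass from the interior to all of $\Theta_{\tau,\sigma}(\x^*;\mu)$ I would invoke continuity: $\w_\tau\mapsto\Psi_\tau(\x,\w_\tau;\x^*)$ is continuous in $\w_\tau$ for each $\x$ (as noted before Proposition \ref{prop2}) and is $L^1(\mu)$-dominated, so $\w_\tau\mapsto P_{\tau\setminus\sigma}(G(\w_\tau))$ is continuous on $\S^{l-l'}_{\tau\setminus\sigma}$. Since $\Theta_{\tau,\sigma}(\x^*;\mu)$ is convex with nonempty relative interior and lies in the open orthant $\O(F)$, hence within an open hemisphere, every point of it is a limit of relative interior points; letting such interior points converge to an arbitrary $\w_\tau\in\Theta_{\tau,\sigma}(\x^*;\mu)$ gives $P_{\tau\setminus\sigma}(G(\w_\tau))=0$, which by the reduction in the first paragraph is precisely \eqref{eqn8p}. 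I expect the one genuinely delicate point to be the justification of differentiating $h$ under the integral when $\w_\tau$ sits on a boundary between cells of the induced polyhedral subdivision of $\S^{l-l'}_{\tau\setminus\sigma}$, where $\Psi_\tau(\x,\,\cdot\,;\x^*)$ is only directionally differentiable; this is handled by working throughout with the one-sided derivatives supplied by Corollary \ref{cor0} and applying the conclusion to both $\v$ and $-\v$.
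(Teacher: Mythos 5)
Your proof is correct and follows essentially the same route as the paper's: reduce to $\w_\tau\in{\rm int}(\Theta_{\tau,\sigma}(\x^*;\mu))$ by continuity, differentiate the equality in \eqref{eqn4p} via Corollary \ref{cor0} to kill the components of $\int\Psi_\tau\,d\mu$ along $\mathcal{T}_{\w_\tau}(\S^{l-l'}_{\tau\setminus\sigma})$, use $\int\Phi_\sigma\,d\mu=\x^*\perp\w_\tau$ together with the equality itself to kill the radial component, and conclude since these directions span $\mathbb R(F)$. Your preliminary reduction of the left-hand side of \eqref{eqn8p} to $P_{\tau\setminus\sigma}\bigl(\int\Psi_\tau\,d\mu\bigr)$ via Corollary \ref{cor00}$(ii)$, and your explicit dominated-convergence justification for differentiating under the integral, are only minor reorganisations and refinements of the same argument.
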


Note that, if $l'=l-1$, equality \eqref{eqn8p} holds automatically since its left hand side is a 1-dimensional vector so that the equality follows from the assumption that $\w_\tau\in\Theta_{\tau,\sigma}(\x^*;\mu)$. 

\begin{proof}
By the continuity of $\Psi_\tau(\x,\w_\tau;\x^*)$ in $\w_\tau$, we may assume that $\w_\tau\in\hbox{int}(\Theta_{\tau,\sigma}(\x^*;\mu))$. Then equality holds in \eqref{eqn4p} in a neighbourhood of $\w_\tau$, so that 
\[D\left(\left\langle\w_\tau,\int_{\X^m}\Psi_\tau(\x,\w_\tau;\x^*)\,d\mu(\x)\right\rangle\right)(\v)=0,\quad\forall\v\in\mathcal{T}_{\w_\tau}(\S^{l-l'}_{\tau\setminus\sigma}).\]
By Corollary \ref{cor0}, this implies that
\[\left\langle\v,\,\int_{\X^m}\Psi_\tau(\x,\w_\tau;\x^*)\,d\mu(\x)\right\rangle=0,\quad\forall\v\in\mathcal{T}_{\w_\tau}(\S^{l-l'}_{\tau\setminus\sigma}).\]
On the other hand, it follows from $\int_{\X^m}\Phi_\sigma(\x;\x^*)\,d\mu(\x)=\x^*$ and $\langle\w_\tau,\x^*\rangle=0$ that 
\[\left\langle\w_\tau,\,\int_{\X^m}\Phi_\sigma(\x;\x^*)\,d\mu(\x)\right\rangle=0,\quad\forall\w_\tau\in\S^{l-l'}_{\tau\setminus\sigma}.\]
Hence, taking the directional derivative of the left hand side as a function of $\w_\tau\in\S^{l-l'}_{\tau\setminus\sigma}$, we have
\[\left\langle\v,\,\int_{\X^m}\Phi_\sigma(\x;\x^*)\,d\mu(\x)\right\rangle=0\]
for all $\v\in\mathcal{T}_{\w_\tau}(\S^{l-l'}_{\tau\setminus\sigma})$. Noting that the left hand side of \eqref{eqn8p} is a vector lying in the $(l-l')$-dimensional Euclidean space containing $\S^{l-l'}_{\tau\setminus\sigma}$, the fact that $\w_\tau\in\Theta_{\tau,\sigma}(\x^*;\mu)$, together with the above, implies that the required result holds for any $\w_\tau\in\hbox{int}(\Theta_{\tau,\sigma}(\x^*;\mu))$. 
\end{proof}

One immediate consequence of Proposition \ref{prop5a} is the following.

\begin{corollary} 
Assume that the conditions given in Proposition $\ref{prop5a}$ are satisfied. If $\sigma=\O(E)$ and $\tau=\O(E\cup F)$ then, for all $\w_\tau\in\Theta_{\tau,\sigma}(\x^*;\mu)$,
\begin{eqnarray}
\x^*=\int_{\X^m}\Psi_\tau(\x,\w_\tau;\x^*)\,d\mu(\x).
\label{eqn11p}
\end{eqnarray}
That is, the point $\x^*\in\sigma$, as a point in $\mathbb{R}(E\cup F)$, is the Euclidean mean of each of the Euclidean random variables \red{$\Psi_\tau(\bxi,\w_\tau;\x^*)$} for such $\w_{\tau}$. 
\label{cor0b}
\end{corollary}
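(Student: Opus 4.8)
The plan is to combine the equality \eqref{eqn8p} supplied by Proposition \ref{prop5a} with the characterisation \eqref{eqn5p} of the Fréchet mean from Theorem \ref{thm3}$(ii)$. Since both ingredients are already in hand, the argument will be essentially a one-line manipulation of integrals, and the only care needed concerns the ambient Euclidean spaces in which the various quantities live.

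First I would integrate \eqref{eqn8p}. For any $\w_\tau\in\Theta_{\tau,\sigma}(\x^*;\mu)$, Proposition \ref{prop5a} rearranges to
\[\int_{\X^m}\Psi_\tau(\x,\w_\tau;\x^*)\,d\mu(\x)=\int_{\X^m}\Phi_\sigma(\x;\x^*)\,d\mu(\x),\]
where both integrands are regarded as vectors in $\mathbb{R}(E\cup F)$, the vector $\Phi_\sigma(\x;\x^*)\in\mathbb{R}(E)$ being embedded with zero $F$-coordinates.

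Next I would invoke Theorem \ref{thm3}$(ii)$. Because $\x^*$ is by hypothesis the Fréchet mean of $\mu$ and lies in $\sigma$, equation \eqref{eqn5p} gives $\x^*=\int_{\X^m}\Phi_\sigma(\x;\x^*)\,d\mu(\x)$. Substituting this into the identity above yields $\x^*=\int_{\X^m}\Psi_\tau(\x,\w_\tau;\x^*)\,d\mu(\x)$, which is precisely the claimed equality \eqref{eqn11p}, and hence $\x^*$ is the Euclidean mean of $\Psi_\tau(\bxi,\w_\tau;\x^*)$.

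The one point that requires a word of justification, rather than being a genuine obstacle, is the consistency of the ambient spaces. Here I would note that $\x^*\in\sigma=\O(E)$ has all its $F$-coordinates equal to zero, so viewing $\x^*$ as a point of $\mathbb{R}(E\cup F)$ is unambiguous; and Corollary \ref{cor00}$(ii)$ ensures $P_\sigma(\Psi_\tau(\x,\w_\tau;\x^*))=\Phi_\sigma(\x;\x^*)$, so that the $\mathbb{R}(E)$-component of the asserted identity is exactly \eqref{eqn5p} while the vanishing of its $\mathbb{R}(F)$-component is precisely the content of \eqref{eqn8p}. Thus the statement is, as advertised, an immediate corollary, and the bookkeeping of the splitting $\mathbb{R}(E\cup F)=\mathbb{R}(E)\times\mathbb{R}(F)$ is the only thing to watch.
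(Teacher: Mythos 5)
Your proof is correct and follows exactly the route the paper intends: the paper presents this corollary as an immediate consequence of Proposition \ref{prop5a}, i.e. combining \eqref{eqn8p} with the Fr\'echet-mean characterisation \eqref{eqn5p} from Theorem \ref{thm3}$(ii)$, which is precisely what you do. Your remark on the splitting $\mathbb{R}(E\cup F)=\mathbb{R}(E)\times\mathbb{R}(F)$ via Corollary \ref{cor00}$(ii)$ is the right bookkeeping and matches the paper's conventions.
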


\red{If a stratum $\sigma=\mathcal{O}(E)$ of co-dimension $l(\geqslant1)$ bounds, in $\X^m$, the stratum $\tau$ of co-dimension $l'(<l)$ and if $\x^*\in\sigma$, then it follows from the proof of Proposition \ref{prop2} that the maps $\Psi_\tau(\,\cdot\,,\w^1_\tau;\x^*)$ and $\Psi_\tau(\,\cdot\,,\w^2_\tau;\x^*)$ from $\X^m$ to $\mathbb{R}(E\cup F)$ are generally not identical for any given distinct $\w^i_\tau\in\S^{l-l'}_{\tau\setminus\sigma}$, $i=1,2$. With the insight obtained from that proof, to characterise the places where they differ we introduce the subset $\Sigma_{\tau,\sigma}(\x^*;\w_\tau)$ of $\X^m$ as follows. It will be clear later, in the proof of Proposition \ref{prop6}, that the set of $\x\in\X^m$ where $\Psi(\x,\w_\tau^1;\x^*)\not=\Psi(\x,\w_\tau^2;\x^*)$ is contained in the set $\Sigma_{\tau,\sigma}(\x^*;\w_\tau^1)\cup\Sigma_{\tau,\sigma}(\x^*;\w_\tau^2)\cup\mathcal D_{\x^*}$. Thus, in particular, for $\bxi$ lying outside of the latter set, the Euclidean random variables $\Psi(\bxi,\w_\tau^1;\x^*)$ and $\Psi(\bxi,\w_\tau^2;\x^*)$ are identical. This fact will be used in the derivation of the limiting distribution of sample Fr\'echet means in the next section.} 

\begin{definition}
Let the stratum $\sigma=\mathcal{O}(E)$ of co-dimension $l(\geqslant1)$ bound, in $\X^m$, the stratum $\tau=\O(E\cup F)$ of co-dimension $l'(<l)$. For $\x^*\in\sigma$ and $\w_\tau\in\S^{l-l'}_{\tau\setminus\sigma}$, a point $\x\in\X^m$ is called singular with respect to $(\x^*,\w_\tau)$, if at least one $A_i$ with $A_i\cap E=\emptyset$ has \red{$|A_i\cap F|>1$, where $i\geqslant1$ and the sequences $\mathcal A=(A_0,A_1,\cdots,A_k)$ and $\mathcal B=(B_0,B_1,\cdots,B_k)$ form the support of the geodesics from $x^*+\lambda\w_\tau$ to $\x$ for all sufficiently small $\lambda>0$.} The set $\Sigma_{\tau,\sigma}(\x^*;\w_\tau)$ consists of all points $\x$ that are singular with respect to $(\x^*,\w_\tau)$. 
\label{def2}
\end{definition}

\red{For example, in the orthant space $\X^2$ of Example \ref{ex3}, using the notation there, $\Sigma_{\tau,\{o\}}(o;\w_\tau)$ is the closure of the light grey region in Figure \ref{fig1}.} It follows from comparison of the corresponding expressions \eqref{eqn3g} and \eqref{eqn4b} that the singularity of $\x$ with respect to $(\x^*,\w_\tau)$ has the same effect on $\Psi_\tau(\x,\w_\tau;\x^*)$ as it does on $\Psi(\x,\w_\tau;\x^*)$. \red{In particular, in terms of the matrix $M_{\x^*,\x}(\w)$ given by \eqref{eqn0p}, we can express $\Sigma_{\tau,\sigma}(\x^*;\w_\tau)$ defined above as
\[\Sigma_{\tau,\sigma}(\x^*;\w_\tau)\setminus\mathcal D_{x^*}=\{\x\in\X^m\mid M_{\x^*,\x}(\w_\tau)\not=0\}.\]}

Note that $\Sigma_{\tau,\sigma}(\x^*;\w_\tau)=\emptyset$ if $l-l'=1$, since then $\mathcal{S}^{l-l'}_{\tau\setminus\sigma}$ contains a single unit vector $\w_\tau$ which leads to the impossibility that \red{$|A_i\cap F|>1$}. Generally, if $l-l'>1$, which implies that $l\geqslant2$, $\Sigma_{\tau,\sigma}(\x^*;\w_\tau)$ could be relatively substantial. Nevertheless, we have the following result on the measure of $\Sigma_{\tau,\sigma}(\x^*;\w_\tau)$.

\begin{proposition}
Let the stratum $\sigma$ of co-dimension $l(\geqslant2)$ bound, in $\X^m$, the stratum $\tau$ of co-dimension $l'(<l-1)$. Assume that the Fr\'echet mean $\x^*$ of $\mu$ lies in $\sigma$ and that $\w_\tau\in{\rm int}(\Theta_{\tau,\sigma}(\x^*;\mu))$, where $\Theta_{\tau,\sigma}(\x^*;\mu)$ is defined by \eqref{eqn7e}. Then, $\mu\left(\Sigma_{\tau,\sigma}(\x^*;\w_\tau)\right)=0$. 
\label{prop5b}
\end{proposition}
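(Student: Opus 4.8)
The plan is to convert the hypothesis that \eqref{eqn4p} is an equality throughout a neighbourhood of $\w_\tau$ into the pointwise statement that the derivative matrix $M_{\x^*,\x}(\w_\tau)$ of \eqref{eqn0p} vanishes for $\mu$-almost every $\x$. Since, by the reformulation of Definition \ref{def2} in terms of \eqref{eqn0p}, the only possible non-zero blocks $-\|P_{B_{l_i}}(\x)\|M^\dagger_{W_{l_i}}$ of $M_{\x^*,\x}(\w_\tau)$ are non-zero precisely when $|A_{l_i}\cap F|>1$, that is precisely when $\x$ is singular, this will give $\mu(\Sigma_{\tau,\sigma}(\x^*;\w_\tau))=0$. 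The engine driving this is a second-order argument in the variable $\w_\tau$, combined with the negative semi-definiteness of $M_{\x^*,\x}(\w_\tau)$.

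First I would fix a unit vector $\v\in\mathcal{T}_{\w_\tau}(\S^{l-l'}_{\tau\setminus\sigma})$ and reuse the spherical geodesic $\alpha(s)=\w_\tau\cos s+\v\sin s$ of the proof of Proposition \ref{prop2}, setting $\phi(s)=\langle\alpha(s),\int_{\X^m}\Psi_\tau(\x,\alpha(s);\x^*)\,d\mu(\x)\rangle$. Because $\w_\tau\in{\rm int}(\Theta_{\tau,\sigma}(\x^*;\mu))$, the point $\alpha(s)$ stays in $\Theta_{\tau,\sigma}(\x^*;\mu)$ for all small $s$, so \eqref{eqn4p} is an equality along $\alpha$ and hence $\phi\equiv0$ near $s=0$. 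Differentiating $\phi$ once and discarding the cross term $\langle\alpha(s),D\Psi_\tau(\x,\alpha(s);\x^*)(\alpha'(s))\rangle$, which vanishes by Proposition \ref{prop2} (this is exactly Corollary \ref{cor0} integrated against $\mu$), gives $\phi'(s)=\int_{\X^m}\langle\alpha'(s),\Psi_\tau(\x,\alpha(s);\x^*)\rangle\,d\mu(\x)=0$ for all small $s\geqslant0$.

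Next I would differentiate this identity again at $s=0+$, using $\alpha''(0)=-\w_\tau$, $\alpha'(0)=\v$ and $D\Psi_\tau(\x,\w_\tau;\x^*)(\v)=\v\,M_{\x^*,\x}(\w_\tau)$ (the one-sided version valid for $s>0$, as in Proposition \ref{prop2}). The contribution of $\alpha''(0)$ integrates to $\langle\w_\tau,\int_{\X^m}\Psi_\tau(\x,\w_\tau;\x^*)\,d\mu(\x)\rangle$, which is $0$ because $\w_\tau\in\Theta_{\tau,\sigma}(\x^*;\mu)$ makes \eqref{eqn4p} an equality, leaving
\[\int_{\X^m}\v\,M_{\x^*,\x}(\w_\tau)\,\v^\top\,d\mu(\x)=0.\]
Differentiation under the integral is legitimate because, for the fixed $\w_\tau$, the finitely many possible supports keep $\|W_{l_i}\|=\|P_{A_{l_i}\cap F}(\w_\tau)\|$ bounded below by a positive constant (recall $\w_\tau$ has positive $F$-coordinates), while $\|P_{B_{l_i}}(\x)\|\leqslant\|\x\|$ is $\mu$-integrable since finiteness of the Fréchet function forces $\int_{\X^m}\|\x\|^2\,d\mu<\infty$. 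The structural key is that $\|W_{l_i}\|M^\dagger_{W_{l_i}}$ is an orthogonal projection, so each block, and hence $M_{\x^*,\x}(\w_\tau)$ itself, is negative semi-definite; thus the integrand above is everywhere $\leqslant0$, and an integral of a non-positive integrand being zero forces $\v\,M_{\x^*,\x}(\w_\tau)\,\v^\top=0$ for $\mu$-almost every $\x$.

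Finally I would let $\v$ range over a countable dense subset of $\mathcal{T}_{\w_\tau}(\S^{l-l'}_{\tau\setminus\sigma})$, producing one $\mu$-full set on which the quadratic form of $M_{\x^*,\x}(\w_\tau)$ vanishes on all of the hyperplane $\w_\tau^\perp$ inside $\mathbb{R}(F)$; negative semi-definiteness then places $\w_\tau^\perp$ in the kernel, and since each $M^\dagger_{W_{l_i}}$ annihilates $W_{l_i}=P_{A_{l_i}\cap F}(\w_\tau)$ we also have $M_{\x^*,\x}(\w_\tau)\,\w_\tau^\top=0$, so $M_{\x^*,\x}(\w_\tau)=0$ on all of $\mathbb{R}(F)$ for $\mu$-a.e.\ $\x$, i.e.\ $\mu$-a.e.\ $\x$ is non-singular. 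I expect the main obstacle to be the second differentiation: one must check that the one-sided derivatives line up across the cells of the induced subdivision of $\S^{l-l'}_{\tau\setminus\sigma}$ meeting at $\w_\tau$, so that $\v\,M_{\x^*,\x}(\w_\tau)$ is genuinely the right object for every chosen $\v$, and one must verify that the $\w$-derivative formula \eqref{eqn0p} holds for every $\x$ (the exclusion of $\mathcal{D}_{\x^*}$ in Definition \ref{def2} concerns differentiability in $\x^*$, not in $\w_\tau$, and so should not obstruct this); together with the passage from a $\v$-dependent null set to a single null set, these are the points requiring the most care.
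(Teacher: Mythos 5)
Your proof is correct and follows essentially the same route as the paper's: the paper likewise moves $\w_\tau$ along a spherical geodesic $\alpha(s)$ in $\S^{l-l'}_{\tau\setminus\sigma}$, uses the equality in \eqref{eqn4p} on ${\rm int}(\Theta_{\tau,\sigma}(\x^*;\mu))$ together with Corollary \ref{cor0} to make $h(s)=\langle\dot\alpha(s),\int_{\X^m}\Psi_\tau(\x,\alpha(s);\x^*)\,d\mu(\x)\rangle$ vanish identically near $s=0$, and then exploits the negative semi-definiteness of the blocks of \eqref{eqn0p} to kill $\dot h(0)=\int_{\X^m}\v\,M_{\x^*,\x}(\w_\tau)\,\v^\top d\mu(\x)$. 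The only differences are cosmetic: you differentiate the equality function twice and pass to a countable dense set of directions to conclude $M_{\x^*,\x}(\w_\tau)=0$ for $\mu$-a.e.\ $\x$, whereas the paper argues by contradiction, decomposing $\Sigma_{\tau,\sigma}(\x^*;\w_\tau)$ into finitely many cells on which $\Psi_\tau$ has a constant expression and choosing one direction per positive-measure cell to force $\dot h(0)<0$.
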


\begin{proof}
Let $\alpha(s)$ be a unit speed geodesic in $\S^{l-l'}_{\tau\setminus\sigma}$, write $\v(s)=\dot\alpha(s)$ and define $h(s)=\left\langle\v(s),\int_{\X^m}\Psi_\tau(\x,\alpha(s);\x^*)\,d\mu(\x)\right\rangle$. Since $\S^{l-l'}_{\tau\setminus\sigma}$ is an open subset of a Euclidean sphere, we have $\dot\v(s)=-\alpha(s)$, $\ddot\alpha(s)=-\alpha(s)$ and so, by Proposition \ref{prop2} and its proof,
\begin{eqnarray*}
\dot h(s)&=&\left\langle\v(s),\int_{\X^m}D\Psi_\tau(\x,\alpha(s);\x^*)(\v(s))\,d\mu(\x)\right\rangle\\
&=&\left\langle\v(s),\int_{\Sigma_{\tau,\sigma}(\x^*;\alpha(s))}\v(s)M_{\x^*,\x}(\alpha(s))\,d\mu(\x)\right\rangle,
\end{eqnarray*}
where $M_{\x^*,\x}(\w)$ is given by \eqref{eqn0p}. The expression for $M_{\x^*,\x}(\w)$ implies that, for $\w\in\S^{l-l'}_{\tau\setminus\sigma}$ and any fixed $\x\in\Sigma_{\tau,\sigma}(\x^*;\w)$, $\langle\v,\v M_{\x^*,\x}(\w)\rangle$ can be written in the form
\[-\sum_{i=1}^j\frac{\red{\|P_{B_{l_i}}(\x)\|}}{\|W_{l_i}\|}\left\{\|v^{\phantom{A}}_{l_i}\|^2-\left\langle v_i,\frac{W_{l_i}}{\|W_{l_i}\|}\right\rangle^2\right\}\]
\red{for some $1\leqslant j\leqslant k$,} where $W_{l_i}$ and \red{$P_{B_{l_i}}(\x)$} are those required for the expression \eqref{eqn0p} for $M_{\x^*,\x}(\w)$ in the proof of Proposition \ref{prop2}. This implies that $\dot h(0)$ must be non-positive. Moreover, for any open or closed subset $\mathcal{E}\subseteq\Sigma_{\tau,\sigma}(\x^*;\alpha(0))$ such that $\Psi_\tau(\x,\alpha(0);\x^*)$ has the same expression for all $\x\in\mathcal E$, there is a vector $\v(0)\in\mathcal{T}_{\alpha(0)}(\S^{l-l'}_{\tau\setminus\sigma})$ such that $\langle\v(0),\,\v(0)\, M_{\x^*,\x}(\alpha(0))\rangle<0$ for all $\x\in\mathcal E$. Then, if $\mu(\mathcal{E})\not=0$, the corresponding $h$ satisfies
\[\dot h(0)\leqslant\left\langle\v(0),\int_{\mathcal E}\v(0)M_{\x^*,\x}(\alpha(0))\,d\mu(\x)\right\rangle<0.\]
Clearly, $\Sigma_{\tau,\sigma}(\x^*;\alpha(0))$ can be decomposed as a finite disjoint union of such sets $\mathcal E$. 

If $\w_\tau=\alpha(0)\in\hbox{int}(\Theta_{\tau,\sigma}(\x^*;\mu))$ then, for any $v(0)\in\mathcal{T}_{\w_\tau}(\S^{l-l'}_{\tau\setminus\sigma})$, the corresponding geodesic $\alpha(s)$ lies in $\Theta_{\tau,\sigma}(\x^*;\mu)$ for all sufficiently small $s\geqslant0$. Using a similar argument to that for the proof of Proposition \ref{prop5a}, the corresponding $h(s)$ must be identically zero for all sufficiently small $s\geqslant0$, which implies that $\dot h(0)=0$. Hence, we must have $\mu(\Sigma_{\tau,\sigma}(\x^*;\w_\tau))=0$. 
\end{proof}

If a stratum $\sigma$ bounds $\tau$ in $\X^m$, $\x^*\in\sigma$ and $\w^1_\tau$, $\w^2_\tau$ are two different vectors at $\x^*$ tangent to $\tau$, then \red{it follows from the map $\Psi_\tau(\,\cdot\,,\w^1_\tau;\x^*)$ generally differing from $\Psi_\tau(\,\cdot\,,\w^2_\tau;\x^*)$ that} the distribution of the Euclidean random variable $\Psi_\tau(\bxi,\w^1_\tau;\x^*)$ generally differs from that of $\Psi_\tau(\bxi,\w^2_\tau;\x^*)$. Nevertheless, under the conditions in Proposition \ref{prop5b}, the $\Psi_\tau(\bxi,\w_\tau;\x^*)$ are in fact a.s. identical for $\w_\tau\in{\rm int}(\Theta_{\tau,\sigma}(\x^*;\mu))$. 

\begin{proposition}
Assume that $\bxi$ is a random variable on $\X^m$ with distribution measure $\mu$ having Fr\'echet mean $\x^*$. Assume further that $\mu\left(\mathcal{D}_{x^*}\right)=0$ and that  $\x^*$ lies in the stratum $\sigma=\O(E)$ of co-dimension $l(\geqslant2)$. Let the stratum $\tau$ of co-dimension $l'(<l-1)$ co-bound $\sigma$, in $\X^m$. If ${\rm int}(\Theta_{\tau,\sigma}(\x^*;\mu))\not=\emptyset$, then the distributions of the Euclidean random variables $\Psi_\tau(\bxi,\w_\tau;\x^*)$ are independent of $\w_\tau\in{\rm int}(\Theta_{\tau,\sigma}(\x^*;\mu))$, where the set $\Theta_{\tau,\sigma}(\x^*;\mu)$ is defined by \eqref{eqn7e}.
\label{prop6}
\end{proposition}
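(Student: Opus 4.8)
The plan is to reduce the equality of distributions to an almost-sure identity, and then to control the relevant exceptional set by combining Proposition~\ref{prop5b} with a Fubini/Tonelli argument along a path of directions. First I would observe that two random variables agreeing almost surely have the same distribution, so it suffices to fix arbitrary $\w_\tau^1,\w_\tau^2\in{\rm int}(\Theta_{\tau,\sigma}(\x^*;\mu))$ and prove that $\Psi_\tau(\bxi,\w_\tau^1;\x^*)=\Psi_\tau(\bxi,\w_\tau^2;\x^*)$ $\mathbf P$-almost surely; equivalently, that $\mu(N_{12})=0$, where $N_{12}=\{\x\in\X^m\mid\Psi_\tau(\x,\w_\tau^1;\x^*)\neq\Psi_\tau(\x,\w_\tau^2;\x^*)\}$. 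Since $\Theta_{\tau,\sigma}(\x^*;\mu)$ is convex, so is its interior, and I can therefore join $\w_\tau^1$ to $\w_\tau^2$ by a minimizing geodesic arc $\alpha:[0,1]\to\S^{l-l'}_{\tau\setminus\sigma}$ with $\alpha(0)=\w_\tau^1$, $\alpha(1)=\w_\tau^2$, lying entirely inside ${\rm int}(\Theta_{\tau,\sigma}(\x^*;\mu))$.

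Next, for each fixed $\x\notin\mathcal D_{\x^*}$ I would analyse $g_\x(s)=\Psi_\tau(\x,\alpha(s);\x^*)$ as a function of $s$. Because $\X^m$ has only finitely many strata there are only finitely many possible supports, so the subdivision of $\S^{l-l'}_{\tau\setminus\sigma}$ induced by $\mathcal D_\x$ meets $\alpha$ in finitely many points; between these $g_\x$ is differentiable with derivative $\dot\alpha(s)\,M_{\x^*,\x}(\alpha(s))$ by Proposition~\ref{prop2} and its proof, while across them $g_\x$ is continuous by the continuity of $\Psi_\tau(\x,\,\cdot\,;\x^*)$ noted before Proposition~\ref{prop2}. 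Hence $g_\x$ is absolutely continuous. Using the identity $\Sigma_{\tau,\sigma}(\x^*;\w_\tau)\setminus\mathcal D_{\x^*}=\{\x\mid M_{\x^*,\x}(\w_\tau)\neq0\}$ recorded after Definition~\ref{def2}, the derivative of $g_\x$ vanishes at every $s$ for which $\x\notin\Sigma_{\tau,\sigma}(\x^*;\alpha(s))$. Consequently $g_\x$ is constant, and so $\Psi_\tau(\x,\w_\tau^1;\x^*)=\Psi_\tau(\x,\w_\tau^2;\x^*)$, as soon as the set $S_\x=\{s\in[0,1]\mid\x\in\Sigma_{\tau,\sigma}(\x^*;\alpha(s))\}$ has Lebesgue measure zero.

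It then remains to show that $S_\x$ is Lebesgue-null for $\mu$-almost every $\x$, and this is where the hypotheses enter. For each fixed $s$ the direction $\alpha(s)$ lies in ${\rm int}(\Theta_{\tau,\sigma}(\x^*;\mu))$, so Proposition~\ref{prop5b} gives $\mu(\Sigma_{\tau,\sigma}(\x^*;\alpha(s)))=0$. Writing $\mathrm{Leb}$ for Lebesgue measure on $[0,1]$ and applying Tonelli's theorem to the nonnegative, jointly measurable indicator of $\{(\x,s)\mid\x\in\Sigma_{\tau,\sigma}(\x^*;\alpha(s))\}$ yields
\[
\int_{\X^m}\mathrm{Leb}(S_\x)\,d\mu(\x)=\int_0^1\mu\bigl(\Sigma_{\tau,\sigma}(\x^*;\alpha(s))\bigr)\,ds=0 ,
\]
so that $\mathrm{Leb}(S_\x)=0$ for $\mu$-almost every $\x$. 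Combining this with the standing assumption $\mu(\mathcal D_{\x^*})=0$, for $\mu$-almost every $\x$ the function $g_\x$ is constant, whence $\mu(N_{12})=0$. This gives the required almost-sure identity, hence the equality of distributions, and simultaneously justifies, up to a $\mu$-null set, the containment claimed before Definition~\ref{def2}.

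The step I expect to be the main obstacle is precisely the control of the intermediate directions in the second and third paragraphs. The delicate point is that, even when $\x$ is non-singular for both endpoints $\w_\tau^1$ and $\w_\tau^2$, it may well be singular for some $\alpha(s)$ in between, so that $g_\x$ need not be constant for that individual $\x$ and the naive pointwise containment of $N_{12}$ in $\Sigma_{\tau,\sigma}(\x^*;\w_\tau^1)\cup\Sigma_{\tau,\sigma}(\x^*;\w_\tau^2)\cup\mathcal D_{\x^*}$ can fail. Keeping the entire arc inside ${\rm int}(\Theta_{\tau,\sigma}(\x^*;\mu))$, so that Proposition~\ref{prop5b} is available at \emph{every} $s$, and then integrating in $s$ by Tonelli, is exactly what turns this genuine pointwise failure into a $\mu$-null exceptional set of $\x$.
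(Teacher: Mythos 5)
Your proposal is correct, and it proves the proposition by a genuinely different route from the paper's. The paper's proof is combinatorial at its core: its first step analyses how the support of the geodesic to $\x$ can change as one moves from $\x^*+\lambda\w^1_\tau$ to $\x^*+\lambda\w^2_\tau$ across the hyper-surfaces of $\mathcal D_{\x}$ (using Proposition \ref{prop0e} and a limiting argument to show that the blocks which split or merge cannot meet $E$), concluding that for \emph{arbitrary} distinct directions the exceptional set is pointwise contained in $\Sigma_{\tau,\sigma}(\x^*;\w^1_\tau)\cup\Sigma_{\tau,\sigma}(\x^*;\w^2_\tau)\cup\mathcal D_{\x^*}$; Proposition \ref{prop5b} then gives the a.s.\ identity, and two further paragraphs bootstrap, via convex hulls, to the stronger statement $\mu\bigl(\bigcup_{\w_\tau\in{\rm int}(\Theta_{\tau,\sigma}(\x^*;\mu))}\Sigma_{\tau,\sigma}(\x^*;\w_\tau)\bigr)=0$. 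You replace that combinatorial step by an analytic one: continuity plus piecewise smoothness of $s\mapsto\Psi_\tau(\x,\alpha(s);\x^*)$ across the finite subdivision induced by $\mathcal D_{\x}$, the derivative formula $\dot\alpha(s)\,M_{\x^*,\x}(\alpha(s))$ from Proposition \ref{prop2}, vanishing of that derivative off the singular set, and Tonelli to make the total singular time zero for $\mu$-a.e.\ $\x$. What you gain is that you never need to track how supports recombine across $\mathcal D_{\x}$, which is the hardest and least transparent part of the paper's argument; what you pay is that Proposition \ref{prop5b} must be available at \emph{every} intermediate direction, forcing the whole arc into ${\rm int}(\Theta_{\tau,\sigma}(\x^*;\mu))$ — exactly what the hypothesis supplies, so this is no loss for the proposition as stated.

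Three caveats. First, Tonelli needs joint measurability of $(\x,s)\mapsto1_{\Sigma_{\tau,\sigma}(\x^*;\alpha(s))}(\x)$; this holds because, for each of the finitely many combinatorial types of support, membership is expressed by the polynomial inequalities of Proposition \ref{prop0a} applied to $(\x^*+\lambda\alpha(s),\x)$ for all small $\lambda$, but you should record this rather than merely assert it. Second, your proof yields only pairwise a.s.\ equality for directions in the interior of $\Theta_{\tau,\sigma}(\x^*;\mu)$; it does not recover the pointwise containment for arbitrary directions nor the uniform null statement displayed above, and it is those stronger conclusions from the paper's proof that are invoked later, in the proofs of Theorem \ref{thm4} and Proposition \ref{prop7}. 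So your argument establishes the proposition as stated but could not be substituted wholesale for the paper's proof without revisiting those later steps. Third, your closing claim that the pointwise containment ``can fail'' directly contradicts the paper, which asserts it before Definition \ref{def2} and argues it in the first part of its proof. Your scepticism is not baseless — the paper's inductive multi-crossing reduction, as written, produces singular blocks only at intermediate directions, and pushing them to an endpoint requires the additional monotonicity considerations of its second paragraph — but you have exhibited no counterexample, so this should be phrased as a delicate point your method avoids, not as an error you have found.
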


Note that the example in the next section makes it clear that the condition $\w_\tau\in{\rm int}(\Theta_{\tau,\sigma}(\x^*;\mu))$ in the statement of Proposition \ref{prop6} cannot be relaxed to $\w_\tau\in\Theta_{\tau,\sigma}(\x^*;\mu)$.

\begin{proof}
First, we show that, for any given distinct $\w^j_\tau\in\S^{l-l'}_{\tau\setminus\sigma}$, $j=1,2$, and for $\x\not\in\Sigma_{\tau,\sigma}(\x^*;\w^1_\tau)\bigcup\Sigma_{\tau,\sigma}(\x^*;\w^2_\tau)\bigcup\mathcal{D}_{\x^*}$, $\Psi_\tau(\x,\w^1_\tau;\x^*)=\Psi_\tau(\x,\w^2_\tau;\x^*)$. Then, it follows from the assumption and Proposition \ref{prop5b} that $\Psi_\tau(\bxi,\w^1_\tau;\x^*)=\Psi_\tau(\bxi,\w^2_\tau;\x^*)$ a.s. Recall \red{from the proof of Theorem \ref{thm2}$(ii)$} that, for fixed $\x\in\X^m$, $\x^*\in\sigma$ and $\w_\tau\in S^{l-l'}_{\tau\setminus\sigma}$, the supports of the geodesics from $\x^*(\lambda,\w_\tau)=\x^*+\lambda\w_\tau$ to $\x$ are the same, for all sufficiently small $\lambda>0$, and that the expression for $\Psi_\tau(\x,\w_\tau;\x^*)$ is determined by this common support. Thus, $\Psi_\tau(\x,\w_\tau^j;\x^*)$ is identical if the geodesics from $\x^*(\lambda,\w_\tau^j)$ to $\x$ have the same support when $\lambda>0$ is sufficiently small. 

Suppose now that the supports $(\mathcal{A}^j,\mathcal{B}^j)$, $j=1,2$, of the geodesics from $\x^*(\lambda,\w^1_\tau)$ and $\x^*(\lambda,\w^2_\tau)$ respectively to $\x$ are different, for all sufficiently small $\lambda>0$. Then, the geodesic $\gamma_\lambda$ between $\x^*(\lambda,\w^1_\tau)$ and $\x^*(\lambda,\w^2_\tau)$ must meet at least one hyper-surface in $\mathcal{D}_{\x}$. If there are more than one, but necessarily finitely many, such hyper-surfaces, by introducing a point on $\gamma_\lambda$ between each pair of consecutive such hyper-surfaces, the change of the supports of the geodesics from points of $\gamma_\lambda$ to $\x$ can be considered inductively to reduce the case to where $\gamma_\lambda$ meets only one such hyper-surface. 

Hence, without loss of generality, we assume that $\gamma_\lambda$ only meets $\mathcal{D}_{\x}$ at a point \red{$\x_\lambda$ on one of the hyper-surfaces in $\mathcal{D}_{\x}$. That is, $\x_\lambda$ satisfies \eqref{eqn2e} for a particular $i_0$ with $\x^*$ being replaced by $\x_\lambda$ and all the other relevant inequalities in Proposition \ref{prop0a}, with $\x_1$ and $\x_2$ replaced by $\x_\lambda$ and $\x$, are strict}. If $\x\not\in\mathcal{D}_{\x^*}$ so that $\x^*\not\in\mathcal{D}_{\x}$, we may assume that the points $\x^*(\lambda,\w^j_\tau)$ lie on the opposite sides of $H$ for all sufficiently small $\lambda>0$. \red{Then, by Proposition \ref{prop0e}}, as $\gamma_\lambda$ moves through \red{$\x_\lambda$}, the supports of the geodesics from $\gamma_\lambda$ to $\x$ change, with the relevant subset \red{$A^1_{i_0}=C_{i_01}\cup C_{i_02}$ of the sequence $\mathcal{A}^1=(A^1_0,\cdots,A^1_k)$ in the support $(\mathcal{A}^1,\mathcal{B}^1)$ on the one side splitting, say, into two subsets $C_{i_01},C_{i_02}$ on the other, and similarly for $B^1_{i_0}$ in $\mathcal B^1$. That is, the support $(\mathcal{A}^2,\mathcal{B}^2)$ of the geodesics from $\x^*(\lambda,\w_\tau^2)$ to $\x$ is related to $(\mathcal{A}^1,\mathcal{B}^1)$ by $\mathcal{A}^2=(A^1_0,\cdots,A^1_{i_0-1},C_{i_01},C_{i_02},A^1_{i_0+1},\cdots,A^1_k)$, and similarly $\mathcal B^2$ to $\mathcal B^1$.} We show now that neither of these subsets $C_{i_01}$ and $C_{i_02}$ can meet $E$. \red{If only one of these two sets meets $E$, say $C_{i_01}$, then since $P_{C_{i_02}}(\x^*(\lambda,\w^1_\tau))\rightarrow0$ as $\lambda\rightarrow0$, it is impossible that there are $\x_\lambda$ such that the corresponding equality \eqref{eqn2e} holds for all sufficiently small $\lambda>0$. Similarly, if both of these sets meet $E$, then the proof of Corollary \ref{cor00}$(ii)$ shows that $P_{C_{i_0s}}(\x^*(\lambda,\w^j_\tau))\rightarrow P_{C_{i_0s}}(\x^*)$, as $\lambda\rightarrow0$, for $j=1,2$. This implies that, for $j=1$, the corresponding strict inequality \eqref{eqn2f} holds for $\x^*$ while, for $j=2$, it is reversed. Hence, that is also impossible.}

Thus, in the case when the supports $(\mathcal{A}^i,\mathcal{B}^i)$ are different, we still have $A^1_j=A^2_j$ for all $j>0$ such that $A^1_j\cap E\not=\emptyset$. 
 
If further $\x\not\in\Sigma_{\tau,\sigma}(\x^*;\w^1_\tau)\bigcup\Sigma_{\tau,\sigma}(\x^*;\w^2_\tau)$, then \red{the change of the support described above cannot happen when both $C_{i_01}\cap E$ and $C_{i_02}\cap E$ are empty, as then $|A_{i_0}^1\cap F|>1$, and so we would have $\x\in\Sigma_{\tau,\sigma}(\x^*;\w^1_\tau)$. Since $A^1_0=A^2_0$, the above implies that we must have $(\mathcal A^i,\mathcal B^i)$ identical for $i=1,2$} and so, for such $\x$, $\Psi_\tau(\x,\w^2_\tau;\x^*)=\Psi_\tau(\x,\w^1_\tau;\x^*)$. 

\vskip 6pt
Next, assume that the two $\w^j_\tau$ are chosen to be sufficiently close that, for any given $\x$ and all sufficiently small $\lambda>0$, the geodesics from $\x^*(\lambda,\w^j_\tau)$ to $\x$ have the same support. Then, if $\w_\tau(\alpha)$, $\alpha\in[0,1]$, is the geodesic between $\w^1_\tau$ and $\w^2_\tau$, an elementary argument on the relevant parameters in the inequalities \eqref{eqn2g} and \eqref{eqn2f} that determine the carrier will show that these parameters are monotonic in $\alpha$ along the geodesic. So, the geodesic from $\x^*(\lambda,\w_\tau(\alpha))$ to $\x$ will have the same support as that for the geodesics from $\x^*(\lambda,\w^j_\tau)$ to $\x$. This implies that $\Sigma_{\tau,\sigma}(\x^*;\w_\tau(\alpha))\subseteq\Sigma_{\tau,\sigma}(\x^*;\w^1_\tau)\bigcup\Sigma_{\tau,\sigma}(\x^*;\w^2_\tau)$, so that $\Psi_\tau(\bxi,\w_\tau(\alpha);\x^*)$ are a.s. independent of $\alpha\in[0,1]$. 

\vskip 6pt
Finally, since $\Theta_{\tau,\sigma}(\x^*;\mu)$ is convex, there is a sequence $\{\w^n_\tau\mid n\geqslant1\}\subset\hbox{int}(\Theta_{\tau,\sigma}(\x^*;\mu))$ such that
\begin{eqnarray}
\hbox{int}(\Theta_{\tau,\sigma}(\x^*;\mu))=\lim_{n\rightarrow\infty}C_n,
\label{eqn12a}
\end{eqnarray}
where $C_n$ is the convex hull in $\S_{\tau\setminus\sigma}^{l-l'}$ of $\{\w^1_\tau,\cdots,\w^n_\tau\}$. The above argument implies that, without loss of generality, we may also assume that $\{\w^n_\tau\mid n\geqslant1\}$ have the property that, for any $\w_\tau\in C_n$,
\[\Sigma_{\tau,\sigma}(\x^*;\w_\tau)\subseteq\bigcup_{i=1}^n\Sigma_{\tau,\sigma}(\x^*;\w^i_\tau).\]
This shows that
\[\mu\left(\bigcup_{\w_\tau\in C_n}\Sigma_{\tau,\sigma}(\x^*;\w_\tau)\right)=\mu\left(\bigcup_{i=1}^n\Sigma_{\tau,\sigma}(\x^*;\w^i_\tau)\right)=0,\]
so that $\Psi_\tau(\bxi,\w_\tau;\x^*)$ are a.s. independent of $\w_\tau\in C_n$. Hence, it follows from \eqref{eqn12a} that
\[\mu\left(\bigcup_{\w_\tau\in{\rm int}(\Theta_{\tau,\sigma}(\x^*;\mu))}\Sigma_{\tau,\sigma}(\x^*;\w_\tau)\right)=\lim_{n\rightarrow\infty}\mu\left(\bigcup_{\w_\tau\in C_n}\Sigma_{\tau,\sigma}(\x^*;\w_\tau)\right)=0,\]
which gives the required result.
\end{proof}

\section{The limiting distribution of sample Fr\'echet means}

In this section, we assume that $\{\bxi_i\,:\,i\geqslant1\}$ is a sequence of \textit{i.i.d.} random variables \red{defined on a common probability space $(\bOmega,\mathcal F,{\bf P})$ with values} in $\X^m$; \red{that $\mu$ is the distribution measure of $\bxi_1$;} and that $\hat\bxi_n$ is the sample Fr\'echet mean of $\bxi_1,\cdots,\bxi_n$. Then, $\hat\bxi_n$ converges to the Fr\'echet mean $\x^*$ of $\mu$ almost surely as $n$ tends to infinity (cf. \cite{HZ}).

\subsection{\red{On the support of the limiting distribution}}

If $\x^*$ lies in a top-dimensional stratum, $\X^m$ is locally an $m$-dimensional manifold. One would expect that the limiting behaviour of sample Fr\'echet means $\hat\bxi_n$ is similar, to some extent, to that of sample Fr\'echet means in a Riemannian manifold as obtained in \cite{BP} and \cite{KL}. In particular, the support of the limiting distribution of $\sqrt{n}\log_{x^*}(\hat\bxi_n)=\sqrt{n}(\hat\bxi_n-\x^*)$ is the entire tangent space to $\X^m$ at $\x^*$, as long as $\hbox{cov}(\Phi(\bxi_1;\x^*))$ has rank $m$. This fact was proved for the case of open books in \cite{HHLMMMNOPS} and for the case of tree spaces in \cite{BLO1} and \cite{BLO2}. We shall see in the following that the argument used in \cite{BLO2} can be generalised to $\X^m$, so that the corresponding conclusion is also valid for orthant spaces.

However, when $\x^*$ lies in a stratum of \red{locally} positive co-dimension, the limiting behaviour of sample Fr\'echet means is generally very different. In the case that $\X^m$ is an open book or a tree space and that the stratum containing $\x^*$ is of the co-dimension one, this phenomenon was observed and studied in \cite{HHLMMMNOPS}, \cite{BLO1} and \cite{BLO2}. Similarly, for general orthant spaces, the strictness or otherwise of the inequality \eqref{eqn4p} affects the limiting behaviour of $\hat\bxi_n$. In particular, when \eqref{eqn4p} is strict, there is a constraint on the support of the limiting distribution. To describe this, we recall that, for $\sigma=\O(E)$ of co-dimension $l$ and $\tau=\O(E\cup F)$ of co-dimension $l'<l$ co-bounding $\sigma$, we are denoting the set of unit vectors in $\mathbb{R}(E)\times\O(F)$ by $\S^{m-l'}_{\tau,\sigma}$ and those in $\{\mathbf{0}\}\times\O(F)$ by $\S^{l-l'}_{\tau\setminus\sigma}$. Then, for $\w_\tau$ in the latter, denote by $\mathcal{H}_{\w_\tau}$ the intersection of the half hyper-plane $\mathbb{R}(E)\times\{c\w_\tau\mid c>0\}$ with $\S^{m-l'}_{\tau,\sigma}$, namely
\[\mathcal{H}_{\w_\tau}=\Big\{\w_\sigma+c\w_\tau\in\S^{m-l'}_{\tau,\sigma}\,\,\Big|\,\,c>0\hbox{ and }\w_\sigma\in\mathbb{R}(E)\times\{\mathbf{0}\}\subset\mathbb{R}(E)\times\O(F)\Big\},\]
and let
\[\red{\Omega^k_n(\w_\tau)=\left\{\omega\in\bOmega\mid\hat\bxi_n(\omega)\in\tau\hbox{ and }d\left(\frac{\hat\bxi_n(\omega)-\x^*}{\|\hat\bxi_n(\omega)-\x^*\|},\,\mathcal{H}_{\w_\tau}\right)\leqslant\frac{1}{k}\right\}.}\]

\begin{proposition} 
Let the stratum $\sigma=\mathcal{O}(E)$ of co-dimension $l(\geqslant1)$ bound, in $\X^m$, the stratum $\tau=\O(E\cup F)$ of co-dimension $l'(<l)$. Assume that the Fr\'echet mean $\x^*$ of $\mu$ lies in $\sigma$ and that $\w_\tau\in\S^{l-l'}_{\tau\setminus\sigma}\red{\setminus\Theta_{\tau,\sigma}(\x^*;\mu)}$, \red{where $\S^{l-l'}_{\tau\setminus\sigma}$ and $\Theta_{\tau,\sigma}(\x^*;\mu)$ are given by Definitions $\ref{def0c}$ and $\ref{def2a}$ respectively}. Then, 
\[\lim_{k\rightarrow\infty}\red{{\bf P}}\left(\limsup_n\Omega^k_n(\w_\tau)\right)
=0.\]
\label{prop4}
\end{proposition}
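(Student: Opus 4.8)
The plan is to exploit the strict inequality implied by $\w_\tau\notin\Theta_{\tau,\sigma}(\x^*;\mu)$, namely
\[
\int_{\X^m}\langle\w_\tau,\Psi_\tau(\x,\w_\tau;\x^*)\rangle\,d\mu(\x)=-2c<0
\]
for some $c>0$, in order to show that, for all large $n$, the sample Fr\'echet function $F_n=\frac{1}{2n}\sum_{i=1}^n d(\,\cdot\,,\bxi_i)^2$ strictly increases as one moves from $\x^*$ into $\tau$ with $F$-component along $\w_\tau$; this should prevent $\hat\bxi_n$ from lying in $\tau$ with direction close to $\mathcal{H}_{\w_\tau}$ for infinitely many $n$. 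Since ${\bf P}(\limsup_n\Omega^k_n)$ decreases in $k$ and probability is continuous from above, it suffices to show that the event $\bigcap_k\limsup_n\Omega^k_n$ is null; on this event some subsequence $\hat\bxi_{n_j}\in\tau$ has $\u_{n_j}=(\hat\bxi_{n_j}-\x^*)/\|\hat\bxi_{n_j}-\x^*\|$ converging into $\overline{\mathcal{H}_{\w_\tau}}$.

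First I would record the directional-derivative formula. For a unit vector $\u$ at $\x^*$ pointing into $\tau$, Lemma \ref{lem2} together with the manipulations in the proof of Theorem \ref{thm3} gives $DF_n(\x^*)(\u)=-\frac1n\sum_i\langle\u,\Psi(\bxi_i,\u;\x^*)-\x^*\rangle$. Writing $\u=\w_\sigma+\w^\perp$ with $\w_\sigma\in\mathbb{R}(E)$ and $\w^\perp$ its $F$-component, and setting $\nu=\w^\perp/\|\w^\perp\|$, Corollary \ref{cor0a}(ii), Corollary \ref{cor00}(ii) and the scaling of Corollary \ref{cor0a}(i) decompose this as
\[
DF_n(\x^*)(\u)=-\Big\langle\w_\sigma,\tfrac1n\textstyle\sum_i\big(\Phi_\sigma(\bxi_i;\x^*)-\x^*\big)\Big\rangle-\|\w^\perp\|\,\tfrac1n\textstyle\sum_i\big\langle\nu,\Psi_\tau(\bxi_i,\nu;\x^*)\big\rangle .
\]
By the strong law, using \eqref{eqn5p}, the first bracketed average tends to $\mathbf{0}$ almost surely. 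By a uniform (Glivenko--Cantelli) strong law over directions in a neighbourhood of $\w_\tau$ in $\S^{l-l'}_{\tau\setminus\sigma}$ --- valid because $\Psi_\tau(\,\cdot\,,\,\cdot\,;\x^*)$ takes only finitely many forms, each Lipschitz in the direction with $d(\x^*,\,\cdot\,)$-integrable envelope --- together with the continuity of $\w\mapsto\int\langle\w,\Psi_\tau(\x,\w;\x^*)\rangle\,d\mu$, the second average tends to $-2c$ whenever $\nu\to\w_\tau$. Fix the probability-one event $\Omega_0$ on which both laws hold.

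On $\Omega_0$ I would argue by contradiction. Convexity of $F_n$ on the $\mathrm{CAT}(0)$ space $\X^m$ and the minimality of $\hat\bxi_n$ force $F_n$ to decrease along the geodesic from $\x^*$ to $\hat\bxi_n$, whence $DF_n(\x^*)(\u_n)\le0$. Take $\omega\in\Omega_0\cap\bigcap_k\limsup_n\Omega^k_n$ and pass to a subsequence with $\u_{n_j}\to\u_\infty=\w_\sigma^\infty+c_\infty\w_\tau\in\overline{\mathcal{H}_{\w_\tau}}$, $c_\infty\ge0$. When $c_\infty>0$ the normalised $F$-components satisfy $\nu_{n_j}\to\w_\tau$, and the displayed decomposition gives $DF_{n_j}(\x^*)(\u_{n_j})\to -c_\infty\cdot(-2c)=2c_\infty c>0$, contradicting $DF_{n_j}(\x^*)(\u_{n_j})\le0$.

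The remaining, and principal, difficulty is the degenerate case $c_\infty=0$, where the $F$-component of the limiting approach direction vanishes and the first-order estimate at $\x^*$ is inconclusive. Here I would instead invoke the first-order optimality of $\hat\bxi_{n_j}$ inside the open stratum $\tau$, where $F_n$ is smooth: projecting $\frac1{n_j}\sum_i\log_{\hat\bxi_{n_j}}(\bxi_i)=\mathbf{0}$ onto the tangent direction $\w_\tau$ gives $\frac1{n_j}\sum_i\langle\w_\tau,\log_{\hat\bxi_{n_j}}(\bxi_i)\rangle=0$. As $\hat\bxi_{n_j}\to\x^*$ within $\tau$ with its $F$-direction tending to $\w_\tau$ (which the closeness to $\mathcal{H}_{\w_\tau}$ must be shown to force, once one checks that the $F$-component dominates the transverse error), the carrier stabilises exactly as in the proof of Theorem \ref{thm2}, so $\log_{\hat\bxi_{n_j}}(\bxi_i)\to\Psi(\bxi_i,\w_\tau;\x^*)-\x^*$; a uniform law over a neighbourhood of $\x^*$ in $\tau$ then yields $0=\int\langle\w_\tau,\Psi_\tau(\x,\w_\tau;\x^*)\rangle\,d\mu=-2c<0$, again a contradiction. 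Hence $\Omega_0\cap\bigcap_k\limsup_n\Omega^k_n=\emptyset$, so ${\bf P}(\bigcap_k\limsup_n\Omega^k_n)=0$ and the claim follows. I expect the two uniform strong laws --- one over the direction $\nu$, one over the moving evaluation point $\hat\bxi_n$ --- and the control of the degenerate directions with vanishing $F$-component to be the technical crux of the argument.
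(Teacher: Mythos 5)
Your treatment of the non-degenerate case ($c_\infty>0$) is correct, and although it is packaged differently it runs on the same ingredients as the paper. The paper never differentiates the sample Fr\'echet function at $\x^*$: it forms $\hat\bxi^{\w_\tau}_n=\frac1n\sum_i\Psi_\tau(\bxi_i,\w_\tau;\x^*)$, uses the strong law plus the strictness of \eqref{eqn4p} to get $\langle\w_\tau,\hat\bxi^{\w_\tau}_n\rangle<c<0$ eventually, notes that $\langle\w_\tau,\hat\bxi_n-\x^*\rangle>0$ whenever $\hat\bxi_n\in\tau$, and then contradicts the resulting uniform gap \eqref{eqn7p} by proving $\langle\w_\tau,\hat\bxi_n-\hat\bxi^{\w_\tau}_n\rangle\to0$; the last step uses the empirical stationarity \eqref{eqn5p} at $\hat\bxi_n$, the term-by-term comparison of $\Phi_\tau(\,\cdot\,;\hat\bxi_n)$ with $\Psi_\tau(\,\cdot\,,\w_n;\x^*)$ coming from Theorems \ref{thm1} and \ref{thm2} (their \eqref{eqn7c}), and Proposition \ref{prop2} to replace the direction $(\w_n)_\tau$ by $\w_\tau$ (their \eqref{eqn7d}). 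Your route --- convexity giving $DF_n(\x^*)(\w_n)\leqslant0$, the decomposition via Corollaries \ref{cor0a} and \ref{cor00}, and a Wald-type uniform strong law in the direction --- reaches the same contradiction and is legitimate; the uniform law is plausible (continuity of $\Psi_\tau$ in the direction, envelope bounded by $\|\x\|+\|\x^*\|$ which is integrable since the Fr\'echet function is finite), but it is extra machinery that the paper's pointwise argument avoids.

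The genuine gap is exactly at what you call the principal difficulty, and your fix for it fails. If $\|P_{\tau\setminus\sigma}(\w_n)\|\to0$, then $d(\w_n,\mathcal{H}_{\w_\tau})\to0$ automatically, no matter what the normalised $F$-components $\nu_n=P_{\tau\setminus\sigma}(\w_n)/\|P_{\tau\setminus\sigma}(\w_n)\|$ do: for any unit vector $\w_\sigma\in\mathbb{R}(E)$, the points $\sqrt{1-c^2}\,\w_\sigma+c\,\w_\tau\in\mathcal{H}_{\w_\tau}$ converge to $\w_\sigma$ as $c\to0+$, so $\overline{\mathcal{H}_{\w_\tau}}$ contains the entire unit sphere of $\mathbb{R}(E)$, and membership of $\Omega^k_n(\w_\tau)$ places no constraint whatsoever on $\nu_n$ in this regime (the issue is real precisely when $l-l'\geqslant2$; when $|F|=1$ the segment $\S^{l-l'}_{\tau\setminus\sigma}$ is a single point and $\nu_n\equiv\w_\tau$). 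Hence the closeness to $\mathcal{H}_{\w_\tau}$ cannot be ``shown to force'' $\nu_{n_j}\to\w_\tau$. What your stationarity computation inside $\tau$ then actually yields, after extracting a subsequence with $\nu_{n_j}\to\nu_\infty$, is only $0=\bigl\langle\w_\tau,\int_{\X^m}\Psi_\tau(\x,\nu_\infty;\x^*)\,d\mu(\x)\bigr\rangle$ --- a mixed pairing of $\w_\tau$ against the directional limit taken in the direction $\nu_\infty$, about which the hypothesis $\w_\tau\notin\Theta_{\tau,\sigma}(\x^*;\mu)$ and Theorem \ref{thm3} say nothing. No contradiction follows, so the degenerate case remains open in your argument.

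You should know, however, that the paper does not close this case either: its proof disposes of it with a ``without loss of generality'', assuming that on $\Omega_{\w_\tau}$ the directions $\w_n$ converge to some $\w\in\mathcal{H}_{\w_\tau}$ itself rather than merely its closure, so that $P_{\tau\setminus\sigma}(\w)\neq0$ and $(\w_n)_\tau\to\w_\tau$ --- precisely the non-degenerate situation --- and the tangential approach with vanishing $F$-component is never discussed. So you have correctly isolated a subtlety that the published argument glosses over; but your proposed resolution of it is not a proof, and as it stands your argument, like the paper's, establishes the proposition only after this reduction to limiting directions with non-vanishing $\w_\tau$-component.
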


\begin{proof}
For $\w_\tau$ as given in the proposition, let 
\[\Omega_{\w_\tau}=\red{\bigcap_{k\geqslant1}\limsup_n\Omega^k_n(\w_\tau)}=\bigcap_{k\geqslant1}\bigcap_{l\geqslant1}\bigcup_{n\geqslant l}\Omega^k_n(\w_\tau).\]
Then, the set $\Omega_{\w_\tau}$ consists of points with the property that, for arbitrary $\epsilon>0$, there exist arbitrarily large $n$ such that $\hat\bxi_n$ lies in $\tau$ and $(\hat\bxi_n-\x^*)/\|\hat\bxi_n-\x^*\|$ is within a distance $\epsilon$ of $\mathcal{H}_{\w_\tau}$. Since $\Omega_n^k(\w_\tau)\supseteq \Omega_n^{k+1}(\w_\tau)$, the required result is equivalent to showing that $\red{{\bf P}}(\Omega_{\w_\tau})=0$. 
 
Without loss of generality, we may assume that, restricted to $\Omega_{\w_\tau}$, $\hat\bxi_n$ lie in $\tau$ for all $n$ and $\w_n=(\hat\bxi_n-\x^*)/\|\hat\bxi_n-\x^*\|\rightarrow\w$ as $n\rightarrow\infty$ for some (random) unit vector $\w\in\mathcal{H}_{\w_\tau}$.

\red{Recall that,} for given $\w_\tau\in\S^{l-l'}_{\tau\setminus\sigma}\red{\setminus\Theta_{\tau,\sigma}(\x^*;\mu)}$, each $\Psi_\tau(\bxi_i,\w_\tau;\x^*)$ is a Euclidean random variable on $\mathbb{R}(E\cup F)$. Then, let 
\[\hat\bxi^{\w_\tau}_n=\frac{1}{n}\sum_{i=1}^n\Psi_\tau(\bxi_i,\w_\tau;\x^*)\]
and write $\Omega_0$ for the subset \red{of $\bOmega$} consisting of points such that $\hat\bxi^{\w_\tau}_n$ converges to
\[\int_{\X^m}\Psi_\tau(\x,\w_\tau;\x^*)\,d\mu(\x).\] 
It follows from the classical Law of Large Numbers that $\red{{\bf P}}(\Omega_0)=1$. Hence, restricted to $\Omega_{\w_\tau}\cap\Omega_0$, the assumption on \red{$\w_\tau$} implies that, for some constant $c<0$, there is an $n_0$ such that, for $n>n_0$, $\langle\w_\tau,\,\hat\bxi^{\w_\tau}_n\rangle<c$. However, the assumption that $\w_n\rightarrow\w\in\mathcal{H}_{\w_\tau}$ implies that $\langle\w_\tau,\,\hat\bxi_n-\x^*\rangle>0$ for all sufficiently large $n$. Putting these two conclusions together, we have that, restricted to $\Omega_{\w_\tau}\cap\Omega_0$, 
\begin{eqnarray}
\left\langle\w_\tau,\hat\bxi_n-\hat\bxi^{\w_\tau}_n\right\rangle=\left\langle\w_\tau,\hat\bxi_n-\x^*\right\rangle-\left\langle\w_\tau,\hat\bxi^{\w_\tau}_n\right\rangle>-c>0
\label{eqn7p}
\end{eqnarray}
as $\langle\w_\tau,\x^*\rangle=0$.

On the other hand, restricted to $\Omega_{\w_\tau}\cap\Omega_0$, $\hat\bxi_n$ is in $\tau$ by the assumption made earlier. Then, it follows from \eqref{eqn5p} that
\[\hat\bxi_n=\frac{1}{n}\sum_{i=1}^n\Phi_\tau(\bxi_i;\hat\bxi_n).\]
Thus, we can express the difference $\hat\bxi_n-\hat\bxi^{\w_\tau}_n$ as
\begin{eqnarray}
\hat\bxi_n-\hat\bxi^{\w_\tau}_n=\frac{1}{n}\sum_{i=1}^n\left\{\Phi_\tau(\bxi_i;\hat\bxi_n)-\Psi_\tau(\bxi_i,\w_\tau;\x^*)\right\}.
\label{eqn7a}
\end{eqnarray}
Decompose $\w_n=(\w_n)_\sigma+(\w_n)^\perp$, where $(\w_n)_\sigma=P_\sigma(\w_n)$ and $(\w_n)^\perp=P_{\tau\setminus\sigma}(\w_n)$. Then, by Corollary \ref{cor0a}$(ii)$, for each $1\leqslant i\leqslant n$, 
\begin{eqnarray}
\begin{array}{rcl}
&&\Phi_\tau(\bxi_i;\hat\bxi_n)-\Psi_\tau(\bxi_i,\w_\tau;\x^*)\\
&=&\Phi_\tau(\bxi_i;\hat\bxi_n)-\Psi_\tau(\bxi_i,\w_n;\x^*)+\Psi_\tau(\bxi_i,(\w_n)_\tau;\x^*)-\Psi_\tau(\bxi_i,\w_\tau;\x^*),
\end{array}
\label{eqn7b}
\end{eqnarray}
where $(\w_n)_\tau=(\w_n)^\perp/\|(\w_n)^\perp\|\in\S^{l-l'}_{\tau\setminus\sigma}$. \red{Without loss of generality,} we assume that the carriers of the geodesics from $\hat\bxi_n$ to $\bxi_i$ remain constant. \red{The general case follows from a similar inductive argument to that outlined in the beginning of the proof of Proposition \ref{prop6} and from the fact that $\hat\bxi_n$ converges to $\x^*$ a.s. Then, if $(\mathcal{A},\mathcal{B})$ is the common support of the geodesics, where $\mathcal{A}=(A_0,\cdots,A_k)$ and $\mathcal{B}=(B_0,\cdots,B_k)$, and, for $0<j\leqslant k$, writing $W_j$ for \red{$P_{A_j\cap E}(\x^*)$} if $A_j\cap E\not=\emptyset$ and otherwise \red{$P_{A_j\cap F}(\w_n)$}, Theorem \ref{thm1} tells us} that the $j$th set of components of $(\jmath^{-1})(\Phi(\bxi_i,\hat\bxi_n))$ is the vector \red{$-\frac{\|P_{B_j}(\bxi_i)\|}{\|P_{A_j}(\hat\bxi_n)\|}P_{A_j}(\hat\bxi_n)$} and, for $(\jmath^{-1})(\Psi(\bxi_i,\w_n;\x^*))$, \red{Theorem \ref{thm2}$(ii)$ tells us that} the corresponding vector is $-\frac{\red{\|P_{B_j}(\bxi_i)\|}}{\|W_j\|}W_j$. \red{Hence, the proof of Theorem \ref{thm2}$(ii)$ shows that}, when $A_j\cap E=\emptyset$, \red{these two vectors are identical,} since \red{$P_{A_j}(\hat\bxi_n)=P_{A_j\cap F}(\hat\bxi_n)=P_{A_j\cap F}(\hat\bxi_n-\x^*)$.} \red{While, if} $A_j\cap E\not=\emptyset$, \red{the difference between these two vectors is \red{of the same order as} $\frac{P_{A_j\cap F}(\hat\bxi_n)}{\|P_{A_j}(\hat\bxi_n)\|}$} whose limit, as $n\rightarrow\infty$, is zero since \red{$\|P_{A_j}(\hat\bxi_n)\|\geqslant\|P_{A_j\cap E}(\hat\bxi_n)\|\rightarrow\|P_{A_j\cap E}(\x^*)\|>0$ but $P_{A_j\cap F}(\hat\bxi_n)\rightarrow0$ a.s.} It follows that, as $n\rightarrow\infty$,   
\begin{eqnarray}
\left\langle\w_\tau,\,\Phi_\tau(\bxi_i;\hat\bxi_n)-\Psi_\tau(\bxi_i,\w_n;\x^*)\right\rangle\longrightarrow0\quad\hbox{ a.s.}
\label{eqn7c}
\end{eqnarray}
Moreover, since $\w^\perp=P_{\tau\setminus\sigma}(\w)\not=0$, $\w_n\rightarrow\w$ implies that $(\w_n)_\tau\rightarrow\frac{\w^\perp}{\|\w^\perp\|}=\w_\tau$. Then, it follows from a similar argument to that of the proof of Proposition \ref{prop2} that, for sufficiently large $n$, 
\begin{eqnarray*}
&&\left\langle\w_\tau,\,\Psi_\tau(\bxi_i,(\w_n)_\tau;\x^*)-\Psi_\tau(\bxi_i,\w_\tau;\x^*)\right\rangle\\
&\approx&\left\langle\w_\tau,\,\left(D\Psi_\tau(\bxi_i,\w_\tau;\x^*)\right)\left(\arccos(\langle(\w_n)_\tau,\w_\tau\rangle)\frac{\v_n}{\|\v_n\|}\right)\right\rangle,
\end{eqnarray*}
where $\v_n$ is the component of $(\w_n)_\tau-\w_\tau$ orthogonal to $\w_\tau$, so that as $n\rightarrow\infty$,  
\begin{eqnarray}
\left\langle\w_\tau,\,\Psi_\tau(\bxi_i,(\w_n)_\tau;\x^*)-\Psi_\tau(\bxi_i,\w_\tau;\x^*)\right\rangle\longrightarrow0\quad\hbox{ a.s.,}
\label{eqn7d}
\end{eqnarray}
by Proposition \ref{prop2}. Then, \eqref{eqn7a}, \eqref{eqn7b}, \eqref{eqn7c} and \eqref{eqn7d} together imply that, when it is restricted to $\Omega_{\w_\tau}\cap\Omega_0$, $\langle\w_\tau,\hat\bxi_n-\hat\bxi^{\w_\tau}_n\rangle\rightarrow0$ a.s., as $n\rightarrow\infty$, contradicting \eqref{eqn7p}. \red{Hence, ${\bf P}(\Omega_{\w_\tau}\cap\Omega_0)=0$, so that ${\bf P}(\Omega_{\w_\tau})=0$ as  required.}
\end{proof}

When $l-l'=1$, $\S^{l-l'}_{\tau\setminus\sigma}$ contains a single unit vector, so that we have the following special case. In particular, taking $l=1$ and so $l'=0$ recovers the result of Lemma 6 in \cite{BLO2} for the case of co-dimension one when $\X^m$ is a tree space. 

\begin{corollary}
Let the stratum $\sigma$ of co-dimension $l(\geqslant1)$ bound, in $\X^m$, the stratum $\tau$ of co-dimension $l'=l-1$. Assume that the Fr\'echet mean $\x^*$ of $\mu$ lies in $\sigma$. If the inequality \eqref{eqn4p} corresponding to the unique $\w_\tau\in\S^{l-l'}_{\tau\setminus\sigma}$ is strict then, for all sufficiently large $n$, $\hat\bxi_n$ cannot lie in $\tau$.
\label{cor1}
\end{corollary}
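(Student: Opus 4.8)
The plan is to obtain this corollary as a direct specialisation of Proposition \ref{prop4}, the key simplification being that when $l-l'=1$ the set $\mathcal{H}_{\w_\tau}$ swells to fill the entire spherical segment $\S^{m-l'}_{\tau,\sigma}$, which removes the dependence on $k$ in the events $\Omega^k_n(\w_\tau)$. First I would record the structural consequence of the hypothesis $l'=l-1$: this forces $|F|=l-l'=1$, so $F$ consists of a single axis and $\O(F)$ is exactly the open ray $\{c\w_\tau\mid c>0\}$ spanned by the unique unit vector $\w_\tau\in\S^{l-l'}_{\tau\setminus\sigma}$. Consequently $\mathbb{R}(E)\times\O(F)=\mathbb{R}(E)\times\{c\w_\tau\mid c>0\}$, and intersecting with the unit sphere gives $\mathcal{H}_{\w_\tau}=\S^{m-l'}_{\tau,\sigma}$, the whole open spherical segment of unit vectors tangent to $\tau$.

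Next I would argue that the distance condition defining $\Omega^k_n(\w_\tau)$ becomes vacuous. Whenever $\hat\bxi_n(\omega)\in\tau$, the difference $\hat\bxi_n(\omega)-\x^*$ lies in $\mathbb{R}(E)\times\O(F)$, having strictly positive coordinate along $F$ since $\x^*\in\sigma=\O(E)$ has zero $F$-coordinate. Hence its normalisation $(\hat\bxi_n(\omega)-\x^*)/\|\hat\bxi_n(\omega)-\x^*\|$ lies in $\S^{m-l'}_{\tau,\sigma}=\mathcal{H}_{\w_\tau}$, so the distance appearing in the definition of $\Omega^k_n(\w_\tau)$ is zero and the condition $\leqslant 1/k$ holds for every $k$. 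Therefore $\Omega^k_n(\w_\tau)=\{\omega\mid\hat\bxi_n(\omega)\in\tau\}$ independently of $k$, and in particular $\limsup_n\Omega^k_n(\w_\tau)$ does not depend on $k$.

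Finally I would invoke Proposition \ref{prop4}. The strictness of \eqref{eqn4p} for the unique $\w_\tau$ means precisely that $\w_\tau\notin\Theta_{\tau,\sigma}(\x^*;\mu)$, so the hypotheses of Proposition \ref{prop4} are met and it yields $\lim_{k\rightarrow\infty}{\bf P}(\limsup_n\Omega^k_n(\w_\tau))=0$. Since the quantity inside the limit is constant in $k$ by the previous step, this forces ${\bf P}(\limsup_n\{\hat\bxi_n\in\tau\})=0$; that is, almost surely the event $\{\hat\bxi_n\in\tau\}$ occurs for only finitely many $n$, which is exactly the assertion that for all sufficiently large $n$ the sample Fr\'echet mean $\hat\bxi_n$ cannot lie in $\tau$.

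I do not expect a genuine obstacle here: the one step requiring care is the identification $\mathcal{H}_{\w_\tau}=\S^{m-l'}_{\tau,\sigma}$ and the attendant observation that every point of $\tau$, once centred at $\x^*$ and normalised, lands in $\mathcal{H}_{\w_\tau}$ so that the $k$-dependence collapses. Once that collapse is in place, everything reduces to reading off the meaning of $\limsup_n$ from the conclusion of Proposition \ref{prop4}, and the remainder is routine bookkeeping.
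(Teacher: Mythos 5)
Your proof is correct and follows essentially the same route as the paper: Corollary \ref{cor1} is obtained there precisely by specialising Proposition \ref{prop4} to the case $l-l'=1$, where $\S^{l-l'}_{\tau\setminus\sigma}$ reduces to the single unit vector $\w_\tau$ and strictness of \eqref{eqn4p} means $\w_\tau\notin\Theta_{\tau,\sigma}(\x^*;\mu)$. The details you supply --- that $|F|=1$ forces $\mathcal{H}_{\w_\tau}=\S^{m-l'}_{\tau,\sigma}$, so $\Omega^k_n(\w_\tau)=\{\hat\bxi_n\in\tau\}$ independently of $k$ and the conclusion of Proposition \ref{prop4} becomes ${\bf P}(\limsup_n\{\hat\bxi_n\in\tau\})=0$ --- are exactly the bookkeeping the paper leaves implicit.
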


Thus, when $l-l'=1$, the support of the limiting distribution of any appropriately scaled difference $\hat\bxi_n-\x^*$ intersects the stratum $\mathbb{R}(E)\times\O(F)$ in the tangent cone to $\X^m$ at $\x^*$ only if the inequality \eqref{eqn4p} corresponding to the unique $\w_\tau\in\S^{l-l'}_{\tau\setminus\sigma}$ is an equality.  

\vskip 6pt
Similar to the case where $l-l'=1$, Proposition \ref{prop4} has the following consequence on the support of the limiting distribution when $l-l'>1$, where $\mathcal{C}(\Theta)$ denotes the Euclidean cone on $\Theta$. 

\begin{corollary}
Let the stratum $\sigma=\O(E)$ of co-dimension $l(\geqslant2)$ bound, in $\X^m$, the stratum $\tau=\O(E\cup F)$ of co-dimension $l'\leqslant l-2$. Assume that $\x^*\in\sigma$ is the Fr\'echet mean of $\mu$. Then the support of the limiting distribution of an appropriately scaled difference $\hat\bxi_n-\x^*$, if it meets the stratum $\mathbb{R}(E)\times\O(F)$ in the tangent cone to $\X^m$ at $\x^*$, must be contained in $\mathbb{R}(E)\times\mathcal{C}(\Theta_{\tau,\sigma}(\x^*;\mu))$, where $\Theta_{\tau,\sigma}(\x^*;\mu)$ is defined by \eqref{eqn7e}. 
\label{cor2}
\end{corollary}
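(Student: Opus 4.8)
The plan is to argue by contradiction, translating the hypothesis on the support into an almost sure accumulation statement for the sample Fr\'echet means and then adapting the argument behind Proposition~\ref{prop4}. Let $\nu$ be the limiting distribution of the (appropriately scaled) difference $\hat\bxi_n-\x^*$, and suppose that $\mathrm{supp}(\nu)$ contains a point $\w=(\w_\sigma,\w_F)$ of the open stratum $\mathbb{R}(E)\times\O(F)$, so $\w_F\in\O(F)$ and in particular $\w_F\not=0$. Since $\mathcal{C}(\Theta_{\tau,\sigma}(\x^*;\mu))$ is the set of non-negative multiples of the unit vectors in $\Theta_{\tau,\sigma}(\x^*;\mu)$, the inclusion $\w\in\mathbb{R}(E)\times\mathcal{C}(\Theta_{\tau,\sigma}(\x^*;\mu))$ is equivalent to $\w_\tau:=\w_F/\|\w_F\|\in\Theta_{\tau,\sigma}(\x^*;\mu)$. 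I would therefore assume, for contradiction, that $\w_\tau\in\S^{l-l'}_{\tau\setminus\sigma}\setminus\Theta_{\tau,\sigma}(\x^*;\mu)$, so that \eqref{eqn4p} for $\w_\tau$ is strict; set $c=\big\langle\w_\tau,\int_{\X^m}\Psi_\tau(\x,\w_\tau;\x^*)\,d\mu(\x)\big\rangle<0$. Comparing $E$- and $F$-components shows $\w/\|\w\|\in\mathcal{H}_{\w_\tau}$, since its $F$-component is a positive multiple of $\w_\tau$ while its $E$-component lies in $\mathbb{R}(E)\times\{\mathbf{0}\}$.

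First I would manufacture, with positive probability, a subsequence of sample Fr\'echet means accumulating at $\w/\|\w\|$. For small $\epsilon>0$ let $\mathcal{K}_\epsilon$ be the open set of non-zero tangent vectors whose direction lies within $\epsilon$ of $\w/\|\w\|$; since $\w\in\mathrm{supp}(\nu)$ and $\mathcal{K}_\epsilon$ is an open neighbourhood of $\w$, we have $\nu(\mathcal{K}_\epsilon)>0$. As the direction is unaffected by the scaling and $\hat\bxi_n\to\x^*$ almost surely, membership of $\hat\bxi_n-\x^*$ in $\mathcal{K}_\epsilon$ forces $\hat\bxi_n\in\tau$ for small $\epsilon$. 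Applying the Portmanteau theorem to the open set $\mathcal{K}_\epsilon$ and then the elementary inequality ${\bf P}(\limsup_nA_n)\geq\limsup_n{\bf P}(A_n)$ to $A_n=\{\hat\bxi_n-\x^*\in\mathcal{K}_\epsilon\}$ shows that, with probability at least $\nu(\mathcal{K}_\epsilon)>0$, there are infinitely many $n$ with $\hat\bxi_n\in\tau$ and $\w_n:=(\hat\bxi_n-\x^*)/\|\hat\bxi_n-\x^*\|$ within $\epsilon$ of $\w/\|\w\|$. Restricting to these $n$ and passing to a further subsequence, $\w_n$ converges to a unit vector $\w'$ within $\epsilon$ of $\w/\|\w\|$, whose normalised $F$-component $\w'_\tau$ then lies within some $\delta(\epsilon)\to0$ of $\w_\tau$.

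The main obstacle is that Proposition~\ref{prop4} itself controls only \emph{exact} accumulation at $\mathcal{H}_{\w_\tau}$: its proof tests against the fixed direction $\w_\tau$ and uses $(\w_n)_\tau\to\w_\tau$ to make the analogue of \eqref{eqn7d} vanish, whereas here $(\w_n)_\tau\to\w'_\tau$ is merely close to $\w_\tau$. I would overcome this by retracing \eqref{eqn7a}--\eqref{eqn7p} while keeping $\w_\tau$ as the test direction in $\hat\bxi^{\w_\tau}_n=\frac1n\sum_{i=1}^n\Psi_\tau(\bxi_i,\w_\tau;\x^*)$: the strong law gives $\langle\w_\tau,\hat\bxi^{\w_\tau}_n\rangle\to c<0$, the choice of $\w'$ gives $\langle\w_\tau,\hat\bxi_n-\x^*\rangle>0$ along the subsequence, and the only surviving term, $\langle\w_\tau,\int_{\X^m}\{\Psi_\tau(\x,\w'_\tau;\x^*)-\Psi_\tau(\x,\w_\tau;\x^*)\}\,d\mu(\x)\rangle$, is reached using a uniform strong law of large numbers over the compact set $\S^{l-l'}_{\tau\setminus\sigma}$ to accommodate the convergence $(\w_n)_\tau\to\w'_\tau$ of the (random) limiting direction. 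Because $\w_\star\mapsto\int_{\X^m}\Psi_\tau(\x,\w_\star;\x^*)\,d\mu(\x)$ is continuous, by the continuity of $\Psi_\tau$ in its direction argument together with dominated convergence, this term has modulus at most $|c|/4$ once $\epsilon$, and hence $\delta(\epsilon)$, is small enough. Consequently $\langle\w_\tau,\hat\bxi_n-\hat\bxi^{\w_\tau}_n\rangle$ is eventually larger than $-c/2>0$ yet converges to a limit of modulus at most $|c|/4<-c/2$, a contradiction. Hence no such $\w$ lies in $\mathrm{supp}(\nu)$, which is exactly the assertion that $\mathrm{supp}(\nu)\cap(\mathbb{R}(E)\times\O(F))\subseteq\mathbb{R}(E)\times\mathcal{C}(\Theta_{\tau,\sigma}(\x^*;\mu))$.
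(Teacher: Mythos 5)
Your overall strategy is the right one and is essentially the route the paper intends: the corollary is meant to be an outgrowth of Proposition \ref{prop4}, and your device of re-running its proof against the fixed test direction $\w_\tau$, using a uniform law of large numbers together with continuity of $\w_\star\mapsto\int_{\X^m}\Psi_\tau(\x,\w_\star;\x^*)\,d\mu(\x)$ to absorb the discrepancy between the random limiting direction $\w'_\tau$ and $\w_\tau$, is a sensible (indeed more careful) way to convert the accumulation statement into a statement about the support of the limit law. However, there is a genuine gap at the step ``membership of $\hat\bxi_n-\x^*$ in $\mathcal{K}_\epsilon$ forces $\hat\bxi_n\in\tau$ for small $\epsilon$''. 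This claim is true only when no stratum of $\X^m$ co-bounds $\tau$. Under the corollary's hypotheses ($l'\leqslant l-2$), $\tau$ may perfectly well be bounded by strata $\tau'=\O(E\cup F\cup G)$, and then the tangent-cone stratum $\mathbb{R}(E)\times\O(F)$ is \emph{not} open in the tangent cone: every open neighbourhood of $\w$, in particular your $\mathcal{K}_\epsilon$, contains vectors of $\mathbb{R}(E)\times\O(F\cup G)$ whose $G$-components are small but positive. What smallness of $\epsilon$ actually buys is that all $F$-coordinates of $\hat\bxi_n$ are positive, i.e.\ that $\hat\bxi_n$ lies in $\tau$ \emph{or in some stratum co-bounding} $\tau$; it does not place $\hat\bxi_n$ in $\tau$. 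You also cannot repair this by intersecting $\mathcal{K}_\epsilon$ with the stratum before invoking Portmanteau, since the Portmanteau lower bound is only valid for open sets.

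The gap is not cosmetic, because everything downstream hinges on $\hat\bxi_n\in\tau$: the identity $\hat\bxi_n=\frac{1}{n}\sum_{i=1}^n\Phi_\tau(\bxi_i;\hat\bxi_n)$, i.e.\ \eqref{eqn5p} of Theorem \ref{thm3} applied to the empirical measure, is only available when the sample Fr\'echet mean lies in $\tau$, and the whole comparison with $\hat\bxi^{\w_\tau}_n$ is built on it. On the event your Portmanteau argument produces, it may happen that all but finitely many of the relevant $\hat\bxi_n$ lie in co-bounding strata $\tau'$, and your contradiction never gets started. To close the gap you would need to treat this case as well, for instance: pass to a fixed co-bounding stratum $\tau'$ visited infinitely often (there are finitely many), project the stationarity identity for $\tau'$ down to $\mathbb{R}(E\cup F)$ to get $P_\tau(\hat\bxi_n)=\frac{1}{n}\sum_{i=1}^n\Phi_\tau(\bxi_i;\hat\bxi_n)$ together with $\langle\w_\tau,\hat\bxi_n-\x^*\rangle=\langle\w_\tau,P_\tau(\hat\bxi_n)-\x^*\rangle$, and then re-derive the analogues of \eqref{eqn7c} and \eqref{eqn7d} when the approach direction $\w_n$ is tangent to $\tau'$ and its limit may sit on the \emph{boundary} of the spherical segment attached to $\tau'$ — which requires extending Proposition \ref{prop2} and your uniform law of large numbers beyond the open segment on which the paper establishes them. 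As written, your argument proves the corollary only in the case that $\tau$ is locally top-dimensional.
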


Hence, the support of the limiting distribution of an appropriately scaled difference $\hat\bxi_n-\x^*$ is contained in $\mathcal{K}_\mu$ where, for the closed sets
\begin{eqnarray}
\mathcal{K}_{\mu,\tau}=\mathbb{R}(E)\times\mathcal{C}\left(\Theta_{\tau,\sigma}(\x^*;\mu)\right)
\label{eqn12}
\end{eqnarray}
in the tangent cone to $\X^m$ at $\x^*$, 
\begin{eqnarray}
\mathcal{K}_\mu=\bigcup_{\tau\text{ co-bounds }\sigma}\mathcal{K}_{\mu,\tau}
\label{eqn13}
\end{eqnarray} 
and where we regard $\sigma$ as co-bounding itself. Nevertheless, the following example shows that
\begin{enumerate}
\item[$(i)$] if it is non-empty, $\mathbb{R}(E)\times\mathcal{C}(\Theta_{\tau,\sigma}(\x^*;\mu))$ is not necessarily an entire stratum $\mathbb{R}(E)\times\O(F)$; 
\item[$(ii)$] even if it is the entire stratum, the support of the limiting distribution of $\sqrt{n}(\hat\bxi_n-\x^*)$ does not necessarily intersect that stratum; and
\item[$(iii)$] it is possible that the support of the limiting distribution, when restricted to the stratum, is only a subset of $\mathbb{R}(E)\times\mathcal{C}(\Theta_{\tau,\sigma}(\x^*;\mu))$.
\end{enumerate}

\begin{example}\label{ex4} 
Consider the orthant space $\X^2$ \red{of Example \ref{ex2}}. Let $\mu$ have mass $1/2$ at the two points $p_1$ and $p_2$ equidistant from the cone point $o$ along a geodesic through that point \red{as illustrated in Figure \ref{fig3}}. 
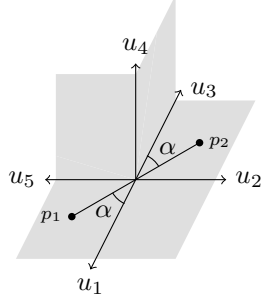
\begin{figure}
\begin{center}
\begin{tikzpicture} [scale=0.7]
\path[fill=gray!25] (0,0) -- (0.75,1.5) -- (0.75,3.5) -- (0.4,2.8) -- cycle;
\path[fill=gray!25] (0,0) -- (0.4,2.8) -- (0,2) -- cycle;
\path[fill=gray!25] (0,0) -- (0,2) -- (-1.5,2) -- (-1.5,0.45) -- cycle;
\path[fill=gray!25] (0,0) -- (-1.5,0.45) -- (-1.5,0) -- cycle;
\path[fill=gray!25] (0,0) -- (0.75,1.5) -- (2.25,1.5) -- (1.5,0) -- cycle;
\path[fill=gray!25] (0,0) -- (-0.75,-1.5) -- (0.75,-1.5) -- (1.5,0) -- cycle;
\path[fill=gray!25] (0,0) -- (-0.75,-1.5) -- (-2.25,-1.5) -- (-1.5,0) -- cycle;
\draw[->] (0,0) -- (1.7,0) node[right] {$u_2$};
\draw[->] (0,0) -- (0,2.2) node[above] {$u_4$};
\draw[->] (0,0) -- (-1.7,0) node[left] {$u_5$};
\draw[->] (0,0) -- (0.85,1.7) node[above, right] {$u_3$};
\draw[->] (0,0) -- (-0.85,-1.7) node[below] {$u_1$};
\draw (-1.2,-0.7) -- (1.2,0.7);
\fill (-1.2,-0.7) circle (2pt) node[below,left] {$\scriptstyle{p_1}$};
\fill (1.2,0.7) circle (2pt)node[above,right] {$\scriptstyle{p_2}$};
\draw(245:0.5) arc (245:210:0.5) node at (-0.6,-0.6) {$\alpha$};
\draw(65:0.5) arc (65:30:0.5) node at (0.6,0.6) {$\alpha$};
\end{tikzpicture}
\end{center}
\caption{Probability measure $\mu$ on $\X^2$ has mass $1/2$ at $p_1$ and $p_2$.}
\label{fig3}
\end{figure}
Then its Fr\'echet mean is at the cone point and the sample Fr\'echet means always lie on this geodesic segment. \red{This, in particular, implies that the support of the limiting distribution of $\sqrt{n}\{\hat\bxi_n-o\}$ is the union of the cone point with two half lines, one in $\{o\}\times\tau^{\phantom{A}}_{1,5}$ and other in $\{o\}\times\tau^{\phantom{A}}_{2,3}$, each extending the relevant geodesic segment, where $\tau^{\phantom{A}}_{i,j}=\O(u_i,u_j)$.}
\begin{enumerate}
\item[($a$)] For any direction $\w_{\tau^{\phantom{A}}_{1,2}}\in\S^2_{\tau^{\phantom{A}}_{1,2}\setminus\{o\}}$, \red{$\Psi_{\tau^{\phantom{A}}_{1,2}}(p,\w_{\tau^{\phantom{A}}_{1,2}};o)$ lies in the plane spanned by the orthant $\tau^{\phantom{A}}_{1,2}$, for any $p$, and, by identifying $u_3$ and $u_5$ with $-u_1$ and $-u_2$ respectively, $\Psi_{\tau^{\phantom{A}}_{1,2}}(p_i,\w_{\tau^{\phantom{A}}_{1,2}};o)=p_i$ for $i=1,2$. Thus,} $\int_{\X^2}\Psi_{\tau^{\phantom{A}}_{1,2}}(p,\w_{\tau^{\phantom{A}}_{1,2}};o)\,d\mu(p)=0$ and so $\Theta_{\tau^{\phantom{A}}_{1,2},\{o\}}(o;\mu)=\S^2_{\tau^{\phantom{A}}_{1,2}\setminus\{o\}}$. \red{Since, in this case, the support of the limiting distribution does not intersect $\{0\}\times\tau^{\phantom{A}}_{1,2}$, this illustrates $(ii)$ above with $\sigma=\{o\}$ and $\tau=\tau^{\phantom{A}}_{1,2}$.}
\item[($b$)] For any direction $\w_{\tau^{\phantom{A}}_{1,5}}\in\S^2_{\tau^{\phantom{A}}_{1,5}\setminus\{o\}}$ \red{such that the angle between $\w_{\tau^{\phantom{A}}_{1,5}}$ and $u_1$-axis is less than or equal $\alpha$, a similar argument shows that}
\[\int_{\X^2}\Psi_{\tau^{\phantom{A}}_{1,5}}(p,\w_{\tau^{\phantom{A}}_{1,5}};o)\,d\mu(p)=0.\]
Hence, \red{such $\w_{\tau^{\phantom{A}}_{1,5}}$ are always contained in $\Theta_{\tau^{\phantom{A}}_{1,5},\{o\}}(o;\mu)$, i.e.} 
\[\Theta_{\tau^{\phantom{A}}_{1,5},\{o\}}(o;\mu)\supseteq\{\theta\in\S^2_{\tau^{\phantom{A}}_{1,5}\setminus\{o\}}\mid\theta\leqslant\alpha\}\]
\red{where $\theta\in\S^2_{\tau^{\phantom{A}}_{1,5}}\setminus\{o\}$ is measured from the $u_1$-axis.} 
\item[($c$)] \red{However,} for any direction $\w_{\tau^{\phantom{A}}_{1,5}}\in\S^2_{\tau^{\phantom{A}}_{1,5}\setminus\{o\}}$ \red{such that the angle between $\w_{\tau^{\phantom{A}}_{1,5}}$ and the $u_1$-axis is greater than $\alpha$, the vector $\Psi_{\tau^{\phantom{A}}_{1,5}}(p_1,\w_{\tau^{\phantom{A}}_{1,5}};o)=p_1$, but the vector $\Psi_{\tau^{\phantom{A}}_{1,5}}(p_2,\w_{\tau^{\phantom{A}}_{1,5}};o)$ lies on the line spanned by the unit vector $\w_{\tau^{\phantom{A}}_{1,5}}$ in $(u_1,u_5)$-plane. Hence, these two vectors do not lie on the same line in the $(u_1,u_5)$-plane through the origin. This gives}
\[\langle\w_{\tau^{\phantom{A}}_{1,5}}, \int_{\X^2}\Psi_{\tau^{\phantom{A}}_{1,5}}(p,\w_{\tau^{\phantom{A}}_{1,5}};o)\,d\mu(p)\rangle<0.\]
\red{Hence, if the angle between $\w_{\tau^{\phantom{A}}_{1,5}}$ and the $u_1$-axis is greater than $\alpha$,} then $\w_{\tau^{\phantom{A}}_{1,5}}\not\in\Theta_{\tau^{\phantom{A}}_{1,5},\{o\}}(o;\mu)$. Combining this with the conclusion $(b)$ shows that $\Theta_{\tau^{\phantom{A}}_{1,5},\{o\}}(o;\mu)=\{\theta\in\S^2_{\tau^{\phantom{A}}_{1,5}\setminus\{o\}}\mid\theta\leqslant\alpha\}$, illustrating $(i)$ and $(iii)$ above with $\sigma=\{o\}$ and $\tau=\tau^{\phantom{A}}_{1,5}$.
\end{enumerate}
\end{example}

\subsection{\red{The limiting distribution}}

To describe the limiting distribution of $\sqrt{n}(\hat\bxi_n-\x^*)$, where the Fr\'echet mean $\x^*$ of $\mu$ lies in a stratum $\sigma=\O(E)$ of \red{local} co-dimension $l\geqslant0$, we continue to regard $\sigma$ as co-bounding itself so that, in this case, the set $F$ of additional axes in the `co-bounding' stratum is empty. Moreover, we shall relate the form of the limiting distribution in the set \eqref{eqn12} for each $\tau$ co-bounding $\sigma$ to a limiting distribution of the Euclidean means of various Euclidean random variables depending on $\tau$:
\begin{enumerate}
\item[$(i)$] for $\tau=\sigma$, corresponding to the set $\mathbb{R}(E)\times\{\mathbf{0}\}$ in \eqref{eqn12}, the relevant Euclidean random variable is $\Phi_\sigma(\bxi_1;\x^*)$; 
\item[$(ii)$] for $\tau\not=\sigma$ the relevant Euclidean random variable is $\Psi_\tau(\bxi_1,\w_\tau;\x^*)$ where, if $l-l'>1$, $\w_\tau$ is any chosen vector in int$\left(\Theta_{\tau,\sigma}(\x^*;\mu)\right)$ if this set is not empty and, if $l-l'=1$ with $\Theta_{\tau,\sigma}(\x^*;\mu)\not=\emptyset$, $\w_\tau$ is its unique element; 
\item[$(iii)$] we take the zero random variable otherwise. 
\end{enumerate}
Note that, by Proposition \ref{prop6}, different choices of $\w_\tau$ in the case $l-l'>1$ of $(ii)$ give random variables that are a.s. equal. Note also that, by Corollary \ref{cor2}, the random variables in the case $(iii)$ play no role in the description of the limiting distribution so that they can be replaced by any other random variables. For simplicity, we denote the relevant random variable above in each case by $\tilde\Psi_\tau(\bxi_1;\x^*)$. With this context and notation write, for each $\tau$ \red{co-bounding $\sigma$}, 
\begin{eqnarray}
A^{-1}_{\sigma,\tau}=\red{U^\top_\tau\left\{I_M-E\left[M_{\x^*}^\sigma(\bxi_1)\right]\right\}U_\tau,}
\label{eqn10}
\end{eqnarray}
\red{where $M_{\x^*}^\sigma(x)$ is defined by \eqref{eqn4a}, $U_\tau$ is the $M\times(m-l')$ matrix whose entries are all zero except for those at $(l_i,i)$ being one, and $u^{\phantom{A}}_{l_1},\cdots,u^{\phantom{A}}_{l_{m-l'}}$ are the ordered axes that span $\jmath(\mathbb R(E\cup F))$.} Note that, since $M_{\x^*}^\sigma(\x)$ is negative semi-definite,  the above inverse is well defined when $E[M_{\x^*}^\sigma(\bxi_1)]$ exists. Then, letting $Z_\tau$ be a random variable in $\mathbb{R}(E\cup F)$ with normal distribution $N(0,A_{\sigma,\tau}^\top V_\tau A_{\sigma,\tau})$, where $V_\tau={\rm cov}(\tilde\Psi_\tau(\bxi_1;\x^*))$, we have the following result.  

\begin{theorem}
Let $\sigma=\O(E)$ be a stratum in $\X^m$ of co-dimension $l(\geqslant0)$. Assume that 
\begin{enumerate}
\item[$(i)$] the Fr\'echet mean $\x^*$ of $\mu$ lies in $\sigma$; 
\item[$(ii)$] $\mu(\mathcal{D}_{\x^*})=0$, where $\mathcal{D}_{\x^*}$ is given by Definition $\ref{def1g}$; 
\item[$(iii)$] $E\left[M^\sigma_{\x^*}(\bxi_1)\right]$ exists, where $M^\sigma_{\x^*}(\x)$ is given by \eqref{eqn4a}; 
\item[$(iv)$] for any stratum $\tau$ in $\X^m$ which co-bounds $\sigma$ and has co-dimension $l'\leqslant l-2$, if $\Theta_{\tau,\sigma}(\x^*;\mu)\not=\emptyset$ then ${\rm int}(\Theta_{\tau,\sigma}(\x^*;\mu))\not=\emptyset$. 
\end{enumerate}
Then, if \red{there exists a random variable $\eta$ on the tangent cone at $\x^*$ such that}
\[\sqrt{n}\{\hat\bxi_n-\x^*\}\buildrel{d}\over\longrightarrow\eta,\quad\hbox{ as }n\rightarrow\infty,\]
\red{then} $\eta$ has the following property: for any stratum $\tau=\O(E\cup F)$ of co-dimension $l'(\leqslant l)$ co-bounding $\sigma$, if ${\bf P}\left(\eta\in\mathbb{R}(E)\times\O(F)\right)>0$ then, for $Z_\tau$ defined as above and $\mathcal{K}_\mu$ by \eqref{eqn13},
\[{\bf P}(\eta\in B)={\bf P}(Z_\tau\in B)\]
for any Borel set $B$ contained in 
\[\begin{cases}
\left({\rm int}\left(\mathbb{R}(E)\times\mathcal{C}(\Theta_{\tau,\sigma}(\x^*;\mu))\right)\right)\setminus\partial\mathcal{K}_\mu&\hbox{ if }l'\leqslant l-2\cr
\left(\mathbb{R}(E)\times\O(F)\right)\setminus\partial\mathcal{K}_\mu&\hbox{ if }l'=l-1\hbox{ or }l.
\end{cases}\]
\label{thm4}
\end{theorem}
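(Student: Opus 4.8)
The plan is to read the defining self-consistency relation for the empirical Fr\'echet mean as an estimating equation and to run a Z-estimation (delta-method) argument, conditionally on the stratum into which $\sqrt n(\hat\bxi_n-\x^*)$ falls. Fix a stratum $\tau=\O(E\cup F)$ of co-dimension $l'$ co-bounding $\sigma$ with ${\bf P}(\eta\in\mathbb R(E)\times\O(F))>0$, and a Borel set $B$ contained in the region displayed in the statement. By Corollary \ref{cor2} (itself a consequence of Proposition \ref{prop4}) any limiting mass in this stratum is supported in $\mathbb R(E)\times\mathcal C(\Theta_{\tau,\sigma}(\x^*;\mu))$; hence, on the event that $\sqrt n(\hat\bxi_n-\x^*)$ lies near $B$, for all large $n$ we have $\hat\bxi_n\in\tau$ and the unit direction $\w_n=(\hat\bxi_n-\x^*)/\|\hat\bxi_n-\x^*\|$ has normalised $F$-component $(\w_n)_\tau\in{\rm int}(\Theta_{\tau,\sigma}(\x^*;\mu))$, which is non-empty by hypothesis $(iv)$. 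Choosing $B$ in the interior and off $\partial\mathcal K_\mu$ is what confines the argument to a single stratum with interior directions, so that the relevant geodesic supports are locally constant (Corollary \ref{cor0c}) and Proposition \ref{prop6} is available.

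On $\{\hat\bxi_n\in\tau\}$, Theorem \ref{thm3}$(ii)$ applied to the empirical measure at $\hat\bxi_n$ gives $\hat\bxi_n=\frac1n\sum_{i=1}^n\Phi_\tau(\bxi_i;\hat\bxi_n)$, while fixing any $\w_\tau\in{\rm int}(\Theta_{\tau,\sigma}(\x^*;\mu))$ and invoking Corollary \ref{cor0b} gives the centring identity $\x^*=\int_{\X^m}\Psi_\tau(\x,\w_\tau;\x^*)\,d\mu(\x)=E[\tilde\Psi_\tau(\bxi_1;\x^*)]$. Subtracting and projecting onto $\mathbb R(E\cup F)$ by $U_\tau^\top$, the task reduces to a first-order expansion of $\frac1n\sum_iU_\tau^\top\{\Phi_\tau(\bxi_i;\hat\bxi_n)-\Psi_\tau(\bxi_i,\w_\tau;\x^*)\}$. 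Here three facts combine: the expression \eqref{eqn0g} depends on its base point only through the unit directions $P_{A_j}(\,\cdot\,)/\|P_{A_j}(\,\cdot\,)\|$, the magnitudes cancelling; by Proposition \ref{prop6} together with $\mu(\mathcal D_{\x^*})=0$ one has $\Psi_\tau(\bxi_i,\w_n;\x^*)=\Psi_\tau(\bxi_i,\w_\tau;\x^*)$ a.s. for interior directions, so the dependence on the random direction $\w_n$ drops out; and Proposition \ref{prop1} supplies the derivative of the base-point dependence through $M^\sigma_{\x^*}(\bxi_i)$. Taylor-expanding $\Phi_\tau(\bxi_i;\hat\bxi_n)$ about $\x^*$ approached along $\w_n$ then yields $\Phi_\tau(\bxi_i;\hat\bxi_n)-\Psi_\tau(\bxi_i,\w_\tau;\x^*)=M^\sigma_{\x^*}(\bxi_i)(\hat\bxi_n-\x^*)+r_{n,i}$ with averaged remainder $o_P(\|\hat\bxi_n-\x^*\|)$ under hypotheses $(ii)$ and $(iii)$, whence
\[A^{-1}_{\sigma,\tau}\,U_\tau^\top(\hat\bxi_n-\x^*)=U_\tau^\top\Big(\tfrac1n\sum_{i=1}^n\tilde\Psi_\tau(\bxi_i;\x^*)-\x^*\Big)+o_P(\|\hat\bxi_n-\x^*\|),\]
with $A^{-1}_{\sigma,\tau}=U_\tau^\top(I_M-E[M^\sigma_{\x^*}(\bxi_1)])U_\tau$ as in \eqref{eqn10}.

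Multiplying by $\sqrt n\,A_{\sigma,\tau}$ and noting that $M^\sigma_{\x^*}$ is symmetric (each block $M^\dagger_{\y}$ of \eqref{eqn10p} is), so that $A_{\sigma,\tau}V_\tau A_{\sigma,\tau}^\top=A^\top_{\sigma,\tau}V_\tau A_{\sigma,\tau}$, the classical multivariate central limit theorem applied to the i.i.d. vectors $\tilde\Psi_\tau(\bxi_i;\x^*)$ gives $\sqrt n\,U_\tau^\top(\hat\bxi_n-\x^*)\buildrel{d}\over\longrightarrow A_{\sigma,\tau}\,N(0,U_\tau^\top V_\tau U_\tau)=Z_\tau$; the $o_P$ term is genuinely negligible because $\sqrt n\|\hat\bxi_n-\x^*\|=O_P(1)$ by the assumed limit $\eta$. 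Finally, the distributional identity ${\bf P}(\eta\in B)={\bf P}(Z_\tau\in B)$ on the stated region follows from the Portmanteau theorem: for a continuity set $B$ in the interior and off $\partial\mathcal K_\mu$, the event $\{\sqrt n(\hat\bxi_n-\x^*)\in B\}$ forces, for large $n$, $\hat\bxi_n\in\tau$ with interior direction and hence the linear representation above, so ${\bf P}(\sqrt n(\hat\bxi_n-\x^*)\in B)\to{\bf P}(Z_\tau\in B)$; approximating a general Borel $B$ in the region by continuity sets completes the matching, the split between $\mathbb R(E)\times\O(F)$ and the cone over $\Theta_{\tau,\sigma}(\x^*;\mu)$ being exactly Corollary \ref{cor2}.

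I expect the main obstacle to be the uniform first-order expansion underlying the second paragraph. Because the base point $\hat\bxi_n$ is random, approaches $\x^*$ from within $\tau$ along a random direction, and because $\Phi_\tau(\,\cdot\,;\cdot)$ is only directionally differentiable at $\x^*$ with a form that can change across the surfaces of $\mathcal D_{\x^*}$ and across the cell boundaries of the induced subdivision of $\S^{l-l'}_{\tau\setminus\sigma}$, one must show that the Jacobian of the empirical estimating map is exactly $A^{-1}_{\sigma,\tau}$ and that the remainder, after averaging, is $o_P(\|\hat\bxi_n-\x^*\|)$. This requires the a.s. constancy of supports and vanishing of the singular sets along interior directions, i.e. Proposition \ref{prop6} with $\mu(\mathcal D_{\x^*})=0$, to discard the random-direction and singular contributions; an integrable envelope for the difference quotients so that hypothesis $(iii)$ upgrades the pointwise expansion to one in probability; and the bookkeeping ensuring that $U_\tau^\top$ isolates precisely the $\mathbb R(E\cup F)$-block on which $M^\sigma_{\x^*}$ acts. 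By comparison the measure-theoretic matching of the last paragraph is routine once the support constraint of Corollary \ref{cor2} is in hand.
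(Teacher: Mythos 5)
Your proposal is correct and follows essentially the same route as the paper's own proof: the empirical estimating equation from Theorem \ref{thm3}, exact a.s.\ identification of the component orthogonal to $\sigma$ via Proposition \ref{prop6} together with $\mu(\mathcal{D}_{\x^*})=0$ and the support-constancy argument of Theorem \ref{thm2}, the delta-method step on the $\sigma$-component via Proposition \ref{prop1}, and the classical CLT applied to $\tilde\Psi_\tau(\bxi_i;\x^*)$, with Corollary \ref{cor2} confining everything to interior directions. The only difference is presentational -- the paper splits the expansion explicitly into the $P_{\tau\setminus\sigma}$ part (which vanishes identically a.s.) and the $P_\sigma$ part (where $M^\sigma_{\x^*}$ acts), whereas you absorb both into a single expansion -- and your closing Portmanteau argument merely fills in what the paper delegates to the analogous arguments in \cite{BLO1} and \cite{BLO2}.
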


\begin{proof}
We assume that $l'\leqslant l-2$. The case for $l'=l-1$ can be similarly derived by noting Corollary \ref{cor1}, whereas for $l'=l$ the result  can be derived directly by simplifying the following arguments.

Write $\Xi_\tau=\O(E)\times\mathcal{C}(\hbox{int}(\Theta_{\tau,\sigma}(\x^*;\mu)))$. \red{By Corollary \ref{cor1},} given $\hat\bxi_n\in\tau$, $\hat\bxi_n\in\Xi_\tau$ for sufficiently large $n$ and, by Theorem \ref{thm3}, we also have
\begin{eqnarray*}
\sqrt{n}(\hat\bxi_n-\x^*)&=&\frac{1}{\sqrt{n}}\sum_{i=1}^n\left\{\Phi_\tau(\bxi_i;\hat\bxi_n)-\tilde\Psi_\tau(\bxi_i;\x^*)\right\}\\
&&+\frac{1}{\sqrt{n}}\sum_{i=1}^n\left\{\tilde\Psi_\tau(\bxi_i;\x^*)-\x^*\right\}.
\end{eqnarray*}
For any $\x'\in\tau$ and $\x\in\X^m$, denote the projection $P_{\tau\setminus\sigma}\left(\Phi_\tau(\x;\x')\right)$ of $\Phi_\tau(\x;\x')$ by $\Phi_{\tau\setminus\sigma}(\x;\x')$. Define $\tilde\Psi_{\tau\setminus\sigma}(\x;\x^*)$ similarly. Then, $\tilde\Psi_{\tau\setminus\sigma}(\x;\x^*)=\tilde\Psi_\tau(\x;\x^*)-\Phi_\sigma(\x;\x^*)$ by Corollary \ref{cor00}($ii$). Since $\hat\bxi_n$ is in $\Xi_\tau$ and converges to $\x^*$ a.s., the result of Proposition \ref{prop6} and the argument for the proof of Theorem \ref{thm2} together imply that, for any given $\x$ and all sufficiently large $n$, $\Phi_{\tau\setminus\sigma}(\x;\hat\bxi_n)=\tilde\Psi_{\tau\setminus\sigma}(\x;\x^*)$ a.s.. Hence, in particular, for sufficiently large $n$, 
\[P_{\tau\setminus\sigma}\left(\hat\bxi_n-\x^*\right)=\frac{1}{n}\sum_{i=1}^n\Phi_{\tau\setminus\sigma}(\bxi_i;\hat\bxi_n)\]
is a.s. the Euclidean mean of $\tilde\Psi_{\tau\setminus\sigma}(\bxi_1;\x^*),\cdots,\tilde\Psi_{\tau\setminus\sigma}(\bxi_n;\x^*)$, so that
\[\frac{1}{\sqrt n}\sum_{i=1}^n\left\{\Phi_{\tau\setminus\sigma}(\bxi_i;\hat\bxi_n)-\tilde\Psi_{\tau\setminus\sigma}(\bxi_i;\x^*)\right\}1_{\Xi_\tau}(\hat\bxi_n)\buildrel{\textbf{P}}\over\longrightarrow0.\]
Thus, the limiting distribution of $\sqrt{n}(\hat\bxi_n-\x^*)1_{\Xi_\tau}(\hat\bxi_n)$ is the same as that of
\[\frac{1}{\sqrt{n}}\sum_{i=1}^n\!\left\{\Phi_\sigma(\bxi_i;\hat\bxi_n)-\tilde\Psi_\sigma(\bxi_i;\x^*)\right\}1_{\Xi_\tau}(\hat\bxi_n)+\frac{1}{\sqrt{n}}\sum_{i=1}^n\!\left\{\tilde\Psi_\tau(\bxi_i;\x^*)-\x^*\right\}1_{\Xi_\tau}(\hat\bxi_n).\]
Since $\tilde\Psi_\sigma(\bxi_i;\x^*)=\Phi_\sigma(\bxi_i;\x^*)$, Proposition \ref{prop1} implies that the limiting distribution of $\sqrt{n}(\hat\bxi_n-\x^*)1_{\Xi_\tau}(\hat\bxi_n)$ is equal to that of
\[\sqrt{n}P_\sigma(\hat\bxi_n-\x^*)\,1_{\Xi_\tau}(\hat\bxi_n)\frac{1}{n}\sum_{i=1}^nM_{\x^*}^\sigma(\bxi_i)+1_{\Xi_\tau}(\hat\bxi_n)\frac{1}{\sqrt{n}}\sum_{i=1}^n\left\{\tilde\Psi_\tau(\bxi_i;\x^*)-\x^*\right\}.\]
Hence, by \eqref{eqn11p}, the required result follows from a similar argument to that used in \cite{BLO1} and \cite{BLO2}.
\end{proof}

As for $\Theta_\sigma(\x^*;\mu)$ defined by \eqref{eqn7f}, the convexity in $\w$ of the directional derivative $D(d_{\x}^2)(\w)$ 
implies that $\mathcal{K}_\mu$ is a convex subset of the tangent cone to $\X^m$ at $\x^*$. This, together with the structure of an orthant space, implies that the result of Theorem \ref{thm4} refers to the behaviour of the limiting distribution only within the interior of $\mathcal{K}_\mu$. Its behaviour at the boundaries will depend on how these sets relate to each other and on the shape of the boundary $\partial\mathcal{K}_\mu$.

\vskip 6pt 
The assumption in Theorem \ref{thm4} that $\mu(\mathcal{D}_{\x^*})=0$ ensures that we are able to employ the so-called delta method for the approximate probability distribution of a function of an asymptotically normal statistical estimator. In principle, it is possible to relax this assumption by using directional derivatives and combining that with the use of the law of the total probability. However, it is clear from the definition of $\mathcal{D}_{\x^*}$ that its structure, although conceptually straightforward, is generally more complex than will admit a simple algebraic representation, and the ensuing results will consequently depend heavily on the behaviour of $\mu$ on $\mathcal{D}_{\x^*}$.

\vskip 6pt
To observe special cases of Theorem \ref{thm4}, let $\sigma=\O(E)$ be a stratum in $\X^m$ of co-dimension $l(\geqslant0)$ in which the Fr\'echet mean $\x^*$ of $\mu$ lies, assume that the conditions of Theorem \ref{thm4} are satisfied and write
\[l(\mu)=\inf\{l'\mid l'=\hbox{co-dimension of }\tau,\hbox{ where }\Theta_{\tau,\sigma}(\x^*;\mu)\not=\emptyset\},\]
where we assume that $l(\mu)=l$ if there is no $\tau$ with co-dimension $l'<l$ which satisfies the above required condition. We assume further that, for $\tau=\O(E\cup F)$ of co-dimension $l(\mu)$ co-bounding $\sigma$ and, if $l(\mu)<l$, with $\Theta_{\tau,\sigma}(\x^*;\mu)\not=\emptyset$, $V_\tau=\hbox{cov}(\tilde\Psi_\tau(\bxi_1;\x^*))$ is of full rank $m-l(\mu)$. Then, it is clear from the proof of Theorem \ref{thm4} that ${\bf P}(\eta\in\mathbb{R}(E)\times\O(F))>0$.    

\vskip 6pt
\textit{Case $l(\mu)=l$:} in this case, $\mathcal{K}_\mu=\mathbb{R}(E)$ and the support of the distribution of $\eta$ is contained in the tangent space of $\sigma$. Then, Theorem \ref{thm4} says that $\eta$ is a normal random variable with mean zero and covariance matrix $A_{\sigma,\sigma}^\top\hbox{cov}(\Phi_\sigma(\bxi_1;\x^*))\,A_{\sigma,\sigma}$, \red{where $A_{\sigma,\tau}$ is defined by \eqref{eqn10}.} This generalises the limiting distribution of $\sqrt{n}\{\hat\bxi_n-\x^*\}$ when $\x^*$ lies in a top-dimensional stratum of a tree space obtained in \cite{BLO2}. 

\vskip 6pt
\textit{Case $l(\mu)=l-1$ so that $l\geqslant1$}: if $\tau=\O(E\cup F)$ is a stratum of co-dimension $l'=l-1$ such that $\Theta_{\tau,\sigma}(\x^*;\mu)\not=\emptyset$, then $F$ contains only one axis. By taking the Borel set $B=\mathbb{R}(E)\times\O(F)$, we see that ${\bf P}\left(\eta\in\mathbb{R}(E)\times\O(F)\right)=1/2$ since the corresponding $Z_\tau$ is a normal random variable in $\mathbb{R}^{m-l+1}$ with mean zero. Hence, there are at most two strata of co-dimension $l(\mu)$ co-bounding $\sigma$ on which infinitely many $\hat\bxi_n$ lie. Moreover, in the case of there being only one such a stratum, ${\bf P}(\eta\in\sigma)=1/2$ and, in case of two such strata, ${\bf P}(\eta\in\sigma)=0$. 

\vskip 6pt
\textit{Case that $0\leqslant l(\mu)<l$, that there is a single $\tau_0=\O(E\cup F_0)$ such that the co-dimension of $\tau_0$ is $l(\mu)$ and that $\Theta_{\tau_0,\sigma}(\x^*;\mu)=\mathcal{S}^{l-l(\mu)}_{\tau_0\setminus\sigma}$}: in this case, we have the following full description of the distribution of $\eta$ in terms of $\phi_{\tau_0}$, the probability density function of the random variable $Z_{\tau_0}$ defined prior to Theorem \ref{thm4}. We first note that, since $\mathcal{K}_\mu$ defined by \eqref{eqn13} is convex and closed, the result of Proposition \ref{prop6} implies that, in this case, 
\[\mathcal{K}_\mu=\bigcup_{F\subseteq F_0}\mathbb{R}(E)\times\O(F)\red{=\mathbb{R}(E)\times\overline{\O(F)}}.\]
Then, we extend the projection map $P$ to $\mathbb{R}(E\cup F_0)$ in an obvious fashion and, for any $\tau=\O(E\cup F)$, where $F\subseteq F_0$, and any $\z\in\mathbb{R}(E\cup F_0)$, write $\z_\tau=P_\tau(\z)$ and $\z_{\tau_0\setminus\tau}=P_{\tau_0\setminus\tau}(\z)=\z-\z_\tau$. 

\begin{proposition}
Under the above assumptions and notation, the limiting distribution of $\sqrt{n}\{\hat\bxi_n-\x^*\}$ is given as follows: for any $\tau=\O(E\cup F)$, where $F\subseteq F_0$, and any Borel subset $B\subseteq\mathbb{R}(E)\times\O(F)$, 
\[{\bf P}(\eta\in B)=\int_B\psi_F(\z_\tau)\,d\z_\tau,\]
where
\[\psi_F(\z_\tau)=\int_{-\infty}^0\phi_{\tau_0}(\z)\,d\z_{\tau_0\setminus\tau}.\]
\label{prop7}
\end{proposition}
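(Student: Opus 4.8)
The plan is to show that the limiting variable $\eta$ is distributed as the image of $Z_{\tau_0}$ under the Euclidean orthogonal projection $\Pi$ of $\mathbb{R}(E\cup F_0)$ onto the closed convex cone $\mathcal{K}_\mu=\mathbb{R}(E)\times\overline{\O(F_0)}$. Since $\overline{\O(F_0)}$ is a single closed Euclidean orthant, the remarks in Section 2 show that $\mathcal{K}_\mu$ is flat, carrying the Euclidean metric of $\mathbb{R}(E\cup F_0)$, and that its relatively open faces are precisely the strata $\mathbb{R}(E)\times\O(F)$, $F\subseteq F_0$. Because orthogonal projection onto a product of half-lines is computed coordinate-wise, $\Pi$ fixes the $E$-coordinates and sends each $F_0$-coordinate $t$ to $\max(t,0)$. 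Granting the distributional identity $\eta\buildrel{d}\over=\Pi(Z_{\tau_0})$, the stated formula is then a routine disintegration: a point $\z\in\mathbb{R}(E\cup F_0)$ is carried by $\Pi$ into the face $\mathbb{R}(E)\times\O(F)$ exactly when its $F$-coordinates are positive and its $(F_0\setminus F)$-coordinates are negative, so the pushforward of the density $\phi_{\tau_0}$ of $Z_{\tau_0}$ has, on that face, the density obtained by integrating $\phi_{\tau_0}$ over the negative $(F_0\setminus F)$-coordinates, which is exactly $\psi_F$. For $F=F_0$ no integration occurs and $\psi_{F_0}=\phi_{\tau_0}$, consistent with Theorem \ref{thm4} applied to $\tau_0$, where $\mathcal{C}(\Theta_{\tau_0,\sigma}(\x^*;\mu))=\overline{\O(F_0)}$ has interior $\O(F_0)$.

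The core step is to establish that
\[
\sqrt{n}\{\hat\bxi_n-\x^*\}=\Pi\!\left(A^{\top}_{\sigma,\tau_0}\,\frac{1}{\sqrt{n}}\sum_{i=1}^n\{\tilde\Psi_{\tau_0}(\bxi_i;\x^*)-\x^*\}\right)+o_{\bf P}(1).
\]
The key structural observation is that the directions in $F_0$ are the axes of $B_0\cap F_0$, and on these the translated logarithm map and its directional limit act as the identity: the corresponding coordinates of $\Phi_{\tau_0}(\x;\x^*)$ and of $\Psi_{\tau_0}(\x,\w_{\tau_0};\x^*)$ are just those of $\x$, as seen from the term $P_{B_0}(\x)$ in \eqref{eqn0} and \eqref{eqn4b}. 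Consequently the $B_0\cap E$-block of $M^\sigma_{\x^*}$ in \eqref{eqn4a} vanishes and, since $M^\sigma_{\x^*}$ maps into $\mathbb{R}(E)$, the matrix $A_{\sigma,\tau_0}$ of \eqref{eqn10} is block diagonal, restricting to the identity on the $F_0$-coordinates; hence $\Pi$ and $A^{\top}_{\sigma,\tau_0}$ are compatible. Thus, within the flat region $\mathcal{K}_\mu$, the sample Fr\'echet mean characterised by Theorem \ref{thm3} minimises, to leading order, a Euclidean quadratic whose Hessian in the $F_0$-directions is the identity, subject only to non-negativity of the $F_0$-coordinates, so its constrained minimiser is the coordinate-wise truncation $\Pi$ of the unconstrained estimator. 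That unconstrained estimator is well defined by Proposition \ref{prop6}, and Corollary \ref{cor00}$(ii)$ together with the delta-method argument in the proof of Theorem \ref{thm4} identify its scaled fluctuation, after transformation by $A^{\top}_{\sigma,\tau_0}$, with $Z_{\tau_0}$; the hypothesis $\mu(\mathcal{D}_{\x^*})=0$ is what renders these a.s. identifications and the delta method valid.

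The main obstacle is the treatment of the boundary faces $\mathbb{R}(E)\times\O(F)$ with $F\subsetneq F_0$, since these lie in $\partial\mathcal{K}_\mu$ and so fall outside the scope of Theorem \ref{thm4}. Here one must show that $\hat\bxi_n$ lands in such a face precisely when the unconstrained estimator has positive $F$-coordinates and negative $(F_0\setminus F)$-coordinates, by checking the one-sided first-order conditions of Theorem \ref{thm3} along the normal directions; this is the extension of the argument of Corollary \ref{cor1} and Proposition \ref{prop4} from a single normal axis to the whole complementary orthant, and it requires controlling the error terms uniformly over all faces so that the entire scaled estimator, not merely its restriction to each stratum, converges. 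Once this is in place, $\Pi$ being $1$-Lipschitz and hence continuous, the continuous mapping theorem applied to the displayed identity, together with $A^{\top}_{\sigma,\tau_0}\frac{1}{\sqrt{n}}\sum_i\{\tilde\Psi_{\tau_0}(\bxi_i;\x^*)-\x^*\}\buildrel{d}\over\longrightarrow Z_{\tau_0}$, yields $\eta\buildrel{d}\over=\Pi(Z_{\tau_0})$, and the disintegration of the first paragraph completes the proof.
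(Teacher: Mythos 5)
Your overall strategy is in essence the paper's own: both identify $\eta$ with the image of $Z_{\tau_0}$ under the coordinate-wise projection $\Pi$ onto $\mathcal{K}_\mu=\mathbb{R}(E)\times\overline{\O(F_0)}$, reduce this to showing that, for large $n$, $\hat\bxi_n$ lies in the face $\tau=\O(E\cup F)$ exactly when a single unconstrained Euclidean estimator built from $\tilde\Psi_{\tau_0}$ has positive $F$-coordinates and negative $(F_0\setminus F)$-coordinates, and then conclude by the central limit theorem and a disintegration of the Gaussian. Your disintegration and continuous-mapping steps are fine, and the block-diagonal structure of $A_{\sigma,\tau_0}$ (identity on the $F_0$-block) is correct, though it follows directly from \eqref{eqn4a} being supported on the $E$-coordinates rather than from your structural observation.

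The genuine gap is that structural observation itself, which is false, and which you use in place of the one fact the paper has to work hard for. It is true that the $F_0$-coordinates of $\Phi(\x;\x^*)$, based at $\x^*\in\sigma$, come from $P_{B_0}(\x)$; but it is not true for the directional limits: by Theorem \ref{thm2}$(ii)$, in the support of the geodesics from $\x^*+\lambda\w_{\tau_0}$ to $\x$ the axes of $F_0$ need not lie in $B_0$, and for $\x$ on the far side of $\sigma$ (e.g. the light grey region of Example \ref{ex2}) they lie in sets $A_i$, $i\geqslant1$, with $A_i\cap E=\emptyset$, so the corresponding coordinates of $\Psi_{\tau_0}(\x,\w_{\tau_0};\x^*)$ are $-\frac{\|P_{B_i}(\x)\|}{\|W_i\|}W_i$ with $W_i=P_{A_i\cap F_0}(\w_{\tau_0})$: negative, ``folded'', and a priori dependent on $\w_{\tau_0}$. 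This folding is exactly the phenomenon that puts limiting mass on the lower-dimensional faces, so it cannot be assumed away. What is actually needed is the a.s. compatibility $P_\tau\bigl(\tilde\Psi_{\tau_0}(\bxi_1;\x^*)\bigr)=\tilde\Psi_\tau(\bxi_1;\x^*)$, the paper's \eqref{eqn11}: without it, the face-$\tau$ estimators $\hat\bxi^\tau_n$ arising from the empirical first-order conditions of Theorem \ref{thm3} do not cohere into projections of one vector $\hat\bxi^{\tau_0}_n$, and your displayed $o_{\bf P}(1)$ identity cannot be formulated, let alone proved. Establishing \eqref{eqn11} is the real content of the paper's proof: it uses the hypothesis $\Theta_{\tau_0,\sigma}(\x^*;\mu)=\S^{l-l(\mu)}_{\tau_0\setminus\sigma}$ together with Propositions \ref{prop5b} and \ref{prop6} to show that, a.s., every $A_i$ with $A_i\cap E=\emptyset$ is a singleton in $F_0$ (so the folded coordinate is $-\|P_{B_i}(\bxi_1)\|$, independent of $\w_{\tau_0}$), and then an exchange-of-limits argument, $\lim_{\w_{\tau_0}\rightarrow\w_\tau}\lim_{\lambda\rightarrow0+}=\lim_{\lambda\rightarrow0+}\lim_{\w_{\tau_0}\rightarrow\w_\tau}$, evaluated along a special path via Corollaries \ref{cor0a}$(ii)$ and \ref{cor00}$(ii)$. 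You do flag the boundary-face criterion as the main obstacle, but your proposal only describes what should be proved there; that description is the proof, and the false identity claim you offer as its structural input would not deliver it.
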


The special case that $l(\mu)=l-1$ of this Proposition, together with the comments in the previous two paragraphs, generalises the limiting distribution of $\sqrt{n}\{\hat\bxi_n-\x^*\}$ when $\X^m$ is a tree space and $\x^*$ lies in a stratum of co-dimension one obtained in \cite{BLO2}. 

\begin{proof}
By Theorem \ref{thm4}, we only need to consider the case where $F\not=F_0$. Assume that $\tau=\O(E\cup F)$ has co-dimension $l'$ and fix $\w_\tau\in S^{l-l'}_{\tau\setminus\sigma}$. \red{We first show that}
\begin{eqnarray}
P_\tau\left(\tilde\Psi_{\tau_0}(\bxi_1;\x^*)\right)=\tilde\Psi_\tau(\bxi_1;\x^*)\qquad\hbox{ a.s. }
\label{eqn11}
\end{eqnarray}

\red{Recall, from the proof of Theorem \ref{thm2}, that} the geodesics from $\x^*(\lambda,\w_{\tau_0})=\x^*+\lambda\w_{\tau_0}$ to $\x$ have the same support for all $\w_{\tau_0}\in S^{l-l(\mu)}_{\tau_0\setminus\sigma}$ sufficiently close to $\w_\tau$ and all sufficiently small $\lambda>0$. For such $\w_{\tau_0}$ and $\lambda$, by Definition \ref{def2}, Corollary \ref{cor00}$(i)$, Propositions \ref{prop5b} and \ref{prop6}, the sequence $\mathcal{A}=(A_0,\cdots,A_k)$ in the support $(\mathcal{A},\mathcal{B})$ of the geodesics from $\x^*(\lambda,\w_{\tau_0})=\x^*+\lambda\w_{\tau_0}$ to $\bxi_1$ has the property that, if $i>0$ and if $A_i\cap E=\emptyset$, then $A_i$ consists of a single axis \red{in $F_0$} a.s., so that the \red{$P_{A_i}(\x^*(\lambda,\w_{\tau_0}))/\|P_{A_i}(\x^*(\lambda,\w_{\tau_0}))\|$} is independent of the value of $\lambda$ a.s. This, together with the fact implied by Corollary \ref{cor00}$(ii)$ that, if $A_i\cap E\not=\emptyset$, \red{$P_{A_i}(\x^*(\lambda,\w_{\tau_0}))\rightarrow P_{A_i}(\x^*)$}  as $\lambda\rightarrow0$, shows that, with probability one, each \red{$P_{A_i}(\x^*(\lambda,\w_{\tau_0}))/\|P_{A_i}(\x^*(\lambda,\w_{\tau_0}))\|$} in the expression \eqref{eqn0g} for $\Phi_{\tau_0}(\bxi_1;\x^*+\lambda\w_{\tau_0})$  is a continuous function at $\x^*$ in the corresponding Euclidean space. It follows that
\[\lim_{\w_{\tau_0}\rightarrow\w_\tau\atop\lambda\rightarrow0+}\Phi_{\tau_0}(\bxi_1;\x^*+\lambda\w_{\tau_0})\]
exists a.s. and so, in particular,
\[\lim_{\w_{\tau_0}\rightarrow\w_\tau}\lim_{\lambda\rightarrow0+}\Phi_{\tau_0}(\bxi_1;\x^*+\lambda\w_{\tau_0})=\lim_{\lambda\rightarrow0+}\lim_{\w_{\tau_0}\rightarrow\w_\tau}\Phi_{\tau_0}(\bxi_1;\x^*+\lambda\w_{\tau_0})\qquad a.s.\]
Thus, the definition of $\Psi$ gives
\[\lim_{\w_{\tau_0}\rightarrow\w_\tau}\Psi_{\tau_0}(\bxi_1,\w_{\tau_0};\x^*)=\lim_{\lambda\rightarrow0+}\lim_{\w_{\tau_0}\rightarrow\w_\tau}\Phi_{\tau_0}(\bxi_1;\x^*+\lambda\w_{\tau_0})\qquad a.s.\]
Since the limit on the right hand side exists, to find it, we take a particular path for $\w_{\tau_0}$ to approach $\w_\tau$: $\w_{\tau_0}=\sin\alpha\,\w^\perp+\cos\alpha\w_\tau$, where $\langle\w^\perp,\w_\tau\rangle=0$ and $\|\w^\perp\|=1$. Then, writing $\beta=\lambda\sin\alpha$, we have
\begin{eqnarray*}
&&\lim_{\w_{\tau_0}\rightarrow\w_\tau}\Phi_{\tau_0}(\bxi_1,\x^*+\lambda\w_{\tau_0})\\
&=&\lim_{\alpha\rightarrow0+}\Phi_{\tau_0}\left(\bxi_1,\x^*(\lambda,\w_\tau)+\lambda(\sin\alpha\,\w^\perp+(\cos\alpha-1)\w_\tau)\right)\\
&=&\lim_{\beta\rightarrow0+}\Phi_{\tau_0}\left(\bxi_1,\x^*(\lambda,\w_\tau)+\beta\w^\perp\right)\\
&=&\Psi_{\tau_0}(\bxi_1,\w^\perp;\x^*(\lambda,\w_\tau))\qquad a.s.,
\end{eqnarray*}
where the second equality follows from Corollary \ref{cor0a}$(ii)$. Hence, it follows from Corollary \ref{cor00}$(ii)$ that 
\[P_\tau\!\left(\lim_{\w_{\tau_0}\rightarrow\w_\tau}\!\!\Psi_{\tau_0}(\bxi_1,\w_{\tau_0};\x^*)\!\right)=\!\lim_{\lambda\rightarrow0+}\Phi_\tau(\bxi_1;\x^*(\lambda,\w_\tau))=\Psi_\tau(\bxi_1,\w_\tau;\x^*)\quad a.s.\]
as $\x^*(\lambda,\w_\tau)\in\tau$. \red{Hence, \eqref{eqn11} follows.}

\vskip 6pt
Since $\hat\bxi_n$ will lie in $\mathcal{K}_\mu$ for sufficiently large $n$ a.s. \red{by Corollary \ref{cor1}}, without loss of generality, we assume that it is true for all $n$. Let $\hat\bxi^\tau_n$ denote the sample Euclidean mean of $\tilde\Psi_\tau(\bxi_1;\x^*),\cdots,\tilde\Psi_\tau(\bxi_1;\x^*)$. Then, \red{$\hat\bxi^\tau_n\in\mathbb{R}(E\cup F)$ and}, by Corollary \ref{cor0b}, $\hat\bxi^\tau_n\rightarrow\x^*$ a.s. Also, application of \eqref{eqn11} gives \red{$P_{\tau\setminus\sigma}\left(\hat\bxi^\tau_n\right)=P_{\tau\setminus\sigma}\left(\hat\bxi^{\tau_0}_n\right)$.} On the other hand, the argument for the proof of Theorem \ref{thm4} implies that, for all sufficiently large $n$, 
\[\red{1_\tau(\hat\bxi_n)P_{\tau\setminus\sigma}\left(\hat\bxi_n\right)=1_\tau(\hat\bxi_n)P_{\tau\setminus\sigma}\left(\hat\bxi_n-\x^*\right)=1_\tau(\hat\bxi_n)P_{\tau\setminus\sigma}\left(\hat\bxi^\tau_n\right)}\]
so that, for all sufficiently large $n$,
\[\red{1_\tau(\hat\bxi_n)P_{\tau\setminus\sigma}\left(\hat\bxi_n\right)=1_\tau(\hat\bxi_n)P_{\tau\setminus\sigma}\left(\hat\bxi^{\tau_0}_n\right).}\]
However, \red{given that $\hat\bxi_n$ is in $\mathcal K_\mu$, since $P_\sigma(\hat\bxi_n)=P_\sigma(\hat\bxi^{\tau_0})$ by Corollary \ref{cor00}$(ii)$ and Corollary \ref{cor0b},} the fact that $\hat\bxi_n\in\tau$ is equivalent to the fact that \red{$P_{\tau\setminus\sigma}\left(\hat\bxi^{\tau_0}_n\right)$} lies in $\O(F)$ and $-\red{P_{\tau_0\setminus\tau}\left(\hat\bxi^{\tau_0}_n\right)}\in\O(F_0\setminus F)$. Hence, we can re-express the above equality as
\[\red{1_\tau(\hat\bxi_n)P_{\tau\setminus\sigma}\left(\hat\bxi_n\right)=1_{\O(F)}\left(P_{\tau\setminus\sigma}\left(\hat\bxi^{\tau_0}_n\right)\right)1_{\O(F_0\setminus F)}\left(-P_{\tau_0\setminus\tau}\left(\hat\bxi^{\tau_0}_n\right)\right)P_{\tau\setminus\sigma}\left(\hat\bxi^{\tau_0}_n\right).}\]
The required result then follows by a slight modification to the proof of Theorem \ref{thm4}.
\end{proof}

In fact, the argument for the proof of Proposition \ref{prop7}, in particular \eqref{eqn11}, also shows that, if $\tau=\O(E\cup F)$ has co-dimension greater than $l(\mu)$ and if $\mathbb{R}\times\Theta_{\tau,\sigma}(\x^*;\mu)$ is contained in the interior of $\mathcal{K}_\mu$, then ${\bf P}(\eta\in\mathbb{R}\times\O(F))=0$.

\vskip 20pt
\noindent\textit{Aknowledgements.} The authors are indebted to Megan Owen for her continuing helpful discussions, following her collaboration in \cite{BLO1} and \cite{BLO2}. \red{We are indebted to the referees for helpful suggestions to improve the description and presentation of our results.} The second author acknowledges funding from the Engineering and Physical Sciences Research Council.

\end{document}